\documentclass[11pt]{amsart}

\usepackage{amsmath, amssymb, amsfonts, amsthm, xcolor,mathabx,stmaryrd}
\usepackage{scalerel}
\usepackage{comment}
\definecolor{my-link}{rgb}{0.5,0.0,0.0}
\definecolor{my-blue}{rgb}{0.0,0.0,0.6}
\definecolor{my-red}{rgb}{0.5,0.0,0.0}
\definecolor{my-green}{rgb}{0.2,0.5,0.2}
\definecolor{darkgreen}{rgb}{0.0,0.5,0.0}
\definecolor{darkblue}{rgb}{0.0,0.0,0.3}
\definecolor{light-gray}{gray}{0.7}
\RequirePackage[colorlinks,urlcolor=my-blue,linkcolor=my-red,citecolor=my-green]{hyperref}
\usepackage[numbers,comma,square,sort&compress]{natbib}

\usepackage{enumitem}

\usepackage[normalem]{ulem}

\usepackage{datetime}

\newcommand{\geo}{\Gamma}

\newcommand{\from}[1]{_{{#1}}}
\newcommand{\dir}[3]{^{{#1, #2#3}}}
\newcommand{\baddir}{\Theta^\w}
\newcommand{\tht}{\theta}
\newcommand{\sig}{{\raisebox{1pt}{\scaleobj{0.6}{\boxempty}}}}  
\newcommand{\sigg}{{\raisebox{-0.2pt}{\scaleobj{1}{\boxempty}}}}  
\newcommand{\siggg}{{\raisebox{0.5pt}{\scaleobj{0.7}{\boxempty}}}}  

\newcommand{\h}{\mathfrak h}
\newcommand{\jumps}{\mathcal V}
\newcommand{\Ijumps}{\mathcal J}

\def\acoef{\bar a}
\def\dirsp{\mathfrak V}

\newcommand{\R}{\mathbb{R}}
\newcommand{\Z}{\mathbb{Z}}
\newcommand{\N}{\mathbb{N}}

\renewcommand{\P}{\mathbb{P}}

\newcommand{\e}{\varepsilon}

\newcommand{\LL}{\mathcal{L}}

\newcommand{\w}{\omega}

\newcommand\abullet{\hspace{0.6pt}{\raisebox{1pt}{\scaleobj{0.6}{\bullet}}}\hspace{0.8pt}}  

\newtheorem{thm}{Theorem}[section]
\newtheorem{lem}[thm]{Lemma}
\newtheorem{prop}[thm]{Proposition}

\theoremstyle{definition}
\newtheorem{defn}[thm]{Definition}

\theoremstyle{remark}
\newtheorem{rmk}[thm]{Remark}

\numberwithin{figure}{section}
\numberwithin{equation}{section}

\usepackage[margin=1in]{geometry}

\title[The Martin boundary of the Directed Landscape]{The Martin boundary of the Directed Landscape}

\author[F.~Rassoul-Agha]{Firas Rassoul-Agha}
\address{Firas Rassoul-Agha\\ University of Utah\\  Mathematics Department\\ 155S 1400E\\   Salt Lake City, UT 84112\\ USA.}
\email{firas@math.utah.edu}
\urladdr{http://www.math.utah.edu/~firas}
\thanks{F.\ Rassoul-Agha was partially supported by National Science Foundation grants DMS-2054630 and DMS-2450951}

\author[M.~Sweeney]{Mikhail Sweeney}
\address{Mikhail Sweeney\\ University of Utah\\  Mathematics Department\\ 155S 1400E\\   Salt Lake City, UT 84112\\ USA.}
\email{sweeney@math.utah.edu}
\thanks{M.\ Sweeney was partially supported by F.\ Rassoul-Agha through National Science Foundation grant DMS-2054630}

\keywords{Busemann function, eternal solution, directed landscape, horofunction, KPZ equation, KPZ fixed point, Martin boundary, stochastic Burgers' equation, stochastic Hamilton-Jacobi equation.}
\subjclass[2020]{60K35, 60K37, 60J50, 31C35, 35F21} 



\date{Last updated: August 28, 2026}

\begin{document}

\begin{abstract}
In the directed landscape, the Martin boundary coincides with the horofunction boundary. We show that functions in this boundary are precisely the eternal solutions possessing a spatial growth rate, and that the minimal Martin boundary is given by the Busemann functions. Moreover, every eternal solution can be expressed as a max-plus convex combination of countably many Busemann functions. Horofunctions are exactly those eternal solutions that admit a representation in terms of at most two Busemann functions with a common growth rate. As a consequence of instability, not all horofunctions are Busemann functions, and the Martin boundary is strictly larger than its minimal part.
\end{abstract}

\maketitle

\section{Introduction}

We study the Kardar--Parisi--Zhang (KPZ) fixed point, the central object of the \emph{KPZ universality class}. This broad class is believed to encompass a wide range of models, including one-dimensional stochastic Hamilton--Jacobi equations (both viscous and inviscid), interacting particle systems, percolation and growth processes, random polymer measures, driven diffusive systems, and random matrix models.  
Models in this class are typically interpreted as randomly growing height functions $h:\mathbb{R}^2\to\mathbb{R}$ which, after suitable centering and rescaling,
\begin{align}\label{KPZscaled}
\varepsilon^{1/2}\bigl(h(\varepsilon^{-1}x,\varepsilon^{-3/2}t)-C\,\varepsilon^{-3/2}t\bigr),
\end{align}
converge as $\varepsilon\to 0$ to a universal limit $\mathfrak{h}$, known as the \emph{KPZ fixed point}. This convergence has been rigorously established for a number of models 
\cite{Wu-26,Agg-Cor-Heg-24-a-,Agg-Cor-Heg-24-b-,Nic-Qua-Rem-20,Vir-20-,Dau-Vir-21-,Zha-25-,Vet-Vir-26}.  

The KPZ fixed point was first constructed in \cite{Mat-Qua-Rem-21} via an explicit description of its transition probabilities in terms of Fredholm determinants. Subsequently, \cite{Dau-Ort-Vir-22} provided a variational characterization through the construction of the \emph{directed landscape}, a continuous real-valued random field $\{\mathcal{L}(x,s;y,t): x,s,y,t\in\mathbb{R},\, t>s\}$, defined on a Polish space $(\Omega,\mathfrak S,\P)$, 
and satisfying 
\begin{align}\label{composition}
\mathcal{L}(x,s;y,t)=\sup_{z\in\mathbb{R}}\bigl(\mathcal{L}(x,s;z,r)+\mathcal{L}(z,r;y,t)\bigr), \quad t>r>s.
\end{align}
This field serves as the random environment in which the KPZ fixed point evolves: given a terminal condition $\varphi$ at time $t$, the fixed point is represented by the variational formula
\begin{align}\label{KPZfp}
\mathfrak{h}(x,s)=\sup_{y\in\mathbb{R}}\bigl\{\mathcal{L}(x,s;y,t)+\varphi(y)\bigr\}, \qquad s<t.
\end{align}
(Using terminal rather than initial conditions is a convention; it aligns with viewing the directed landscape as a continuum last-passage percolation model with forward semi-infinite geodesics.)

A canonical example within the KPZ universality class is the \emph{KPZ equation}, introduced in \cite{Kar-Par-Zha-86} as a model for the evolution of a one-dimensional random interface:
\begin{align}\label{KPZ}
\partial_t h = \tfrac{\lambda}{2}(\partial_x h)^2 + \tfrac{\nu}{2}\partial_{xx}h + \sigma W,
\end{align}
where $W$ denotes space-time white noise, $\sigma\ne 0$ controls the noise strength, $\nu>0$ is a viscosity parameter, and $\lambda\ne0$ modulates the strength of the nonlinearity. By the scaling properties of the KPZ equation (see, e.g., Remark 1.1 in \cite{Jan-Ras-Sep-23-1F1S-}), the rescaled field
$(x,t)\mapsto \nu\lambda^{-1}h\bigl(\nu^{-3}\lambda^2\sigma^2 x,\ \nu^{-5}\lambda^4\sigma^4 t\bigr)$
solves \eqref{KPZ} with parameters $\lambda=\nu=\sigma=1$. 
With this normalization, \cite{Vir-20-} showed that the centered and rescaled field \eqref{KPZscaled} converges as $\e\searrow 0$ to the KPZ fixed point. Applying the same scaling once more, one finds that the field $\varepsilon^{1/2}h(\varepsilon^{-1}x,\varepsilon^{-3/2}t)$ solves \eqref{KPZ} with $\lambda=1$, $\nu=\varepsilon^{1/2}$, and $\sigma=\varepsilon^{1/4}$.  
Since the KPZ equation is a prototypical stochastic Hamilton--Jacobi equation with rough forcing, the KPZ fixed point may thus be viewed as a prototype of an inviscid (albeit degenerate) stochastic Hamilton--Jacobi equation. From this perspective, \eqref{KPZfp} can be interpreted as a Hopf--Lax--Oleinik variational formula. See the introduction of \cite{Ras-Swe-26-b-} for more. 

In light of the conjectured universality of models in the KPZ class, one expects the conclusions of this paper to remain valid beyond the KPZ fixed point, applying more broadly to one-dimensional stochastic Hamilton--Jacobi equations, both inviscid and viscous, as well as to planar first-passage percolation (which corresponds to a random metric on $\Z^2$).

We next give a broad description of the main questions addressed in this work and our principal results. 
From \eqref{KPZfp}, one sees that $\h$ satisfies the dynamic programming (or update) equation
\begin{align}\label{h-update}
\h(x,s)=\sup\bigl\{\mathcal{L}(x,s;y,t)+\h(y,t):y\in\R\bigr\}, \qquad s<t.
\end{align}
Letting the terminal time $t\to\infty$ leads naturally to the notion of an \emph{eternal solution}, that is, a function $\h:\R^2\to\R$ satisfying \eqref{h-update} for all pairs $s<t$. 
Viewing $\mathcal L$ as an operator, such solutions may be interpreted as \emph{harmonic functions} in the max-plus algebra.

At the same time, \eqref{KPZfp} provides a geometric interpretation: viewing it as a Hopf--Lax--Oleinik variational formula (see \eqref{LPP} below), the maximizing paths correspond to characteristic curves, which in the directed landscape picture are geodesics of a continuum last-passage percolation model. In this correspondence, semi-infinite characteristic curves of an eternal solution become semi-infinite geodesics. Borrowing ideas from metric geometry, one can associate to such geodesics \emph{Busemann functions}, obtained as limits of differences of passage times. These functions are themselves eternal solutions, and hence provide canonical examples of max-plus harmonic functions.

A parallel perspective arises from potential theory. In positive temperature models (such as random walk in random environment), differences of passage times correspond to ratios of hitting probabilities or Green’s functions, whose compactification leads to the \emph{Martin boundary}. In the present zero-temperature setting, the analogous construction yields a Martin boundary in the max-plus algebra. From the geometric viewpoint, this same procedure produces \emph{horofunctions}, which generalize Busemann functions and again yield eternal solutions.

Harmonic functions form a convex set in the max-plus sense, which leads to the natural question of whether they can be represented as max-plus convex combinations of a minimal family of harmonic functions. In the language of potential theory, this family is the minimal Martin boundary.

Our results connect these perspectives in the setting of the KPZ fixed point and the directed landscape. First, we show that every eternal solution can be expressed as a max-plus countable convex combination of Busemann functions. Second, we prove that Busemann functions are extremal harmonic functions and coincide with the extremal horofunctions, thereby identifying them with the minimal Martin boundary. Third, we characterize horofunctions (equivalently, points of the Martin boundary) as those eternal solutions that have a spatial growth rate, and show that they are convex combinations of at most two Busemann functions sharing that rate. Finally, we provide a geometric description of these solutions. In the case of two Busemann components, the space-time plane is partitioned by a bi-infinite interface separating the regions where the solution agrees with each component. More generally, an arbitrary eternal solution admits a decomposition into countably many such regions, separated by interfaces that coalesce backward in time.

We note that related results appear in \cite{Bha-Bus-Sor-25-} and in the concurrent preprint \cite{Bha-Bus-Sor-26-}, where the competition interfaces introduced in \cite{Rah-Vir-25} are used to construct the interfaces described in the previous paragraph and to establish the convex decomposition of eternal solutions. By contrast, our approach treats eternal solutions directly within a broader framework relating Busemann functions, horofunctions, and the Martin boundary. In our work, the competition interfaces (Propositions \ref{I-prop} and \ref{prop:general-I}) play only a limited technical role, serving primarily as bookkeeping devices in the proofs of Theorems \ref{thm:convcomb} and \ref{thm:general-I}, respectively.\smallskip

The questions we consider here have their origins in metric geometry, in particular in the study of compactifications of noncompact spaces. On general metric spaces, horofunctions arise from the metric compactification \cite{Gro-81}, 
while Busemann functions form a distinguished subclass constructed from infinite geodesics \cite{Bus-55}. 
A central question in this area is whether all horofunctions are Busemann functions. 
The answer depends on the space: it is affirmative, e.g., for 
Hadamard spaces \cite[Proposition 2.5]{Bal-95},
but fails in other settings, such as Teichm\"uller spaces with complex dimension at least two 
\cite[Theorem 1.1]{Miy-14}.

In the presence of additional structure, other compactifications exist. For instance, on harmonic Riemannian manifolds or graphs equipped with a random walk, the Martin boundary provides a harmonic compactification, with its minimal part corresponding to the extremal points of the convex set of normalized positive harmonic functions \cite{Mar-41}. When both structures are available, it is natural to ask how these compactifications relate: do they coincide, and if so, are Busemann functions precisely the extremal harmonic functions? Again, the answer varies by setting. In particular, for Gromov hyperbolic manifolds \cite[Theorem 6.2]{Anc-90} 
and for noncompact, nonflat, simply connected harmonic manifolds \cite[Theorem 1 and Proposition 2]{Zim-14}, 
the two notions coincide, and moreover, in both cases, the entire Martin boundary is minimal.

In our setting, the passage time $\mathcal L(x,s;y,t)$ is defined only for $t>s$ and therefore does not define a metric. Nevertheless, one can introduce natural analogues of horofunctions, Busemann functions, and the Martin boundary, and pose the same questions. The closest precedents to our work are \cite{Con-01} and \cite{Aki-Gau-Wal-09}, where analogous connections were developed in the settings of weak KAM theory in Hamiltonian dynamics and, respectively, max-plus stochastic operators (the zero-temperature limit of Markov chains). A key difference, however, is that all of the aforementioned settings are deterministic, whereas in our case the landscape $\mathcal L$ is random, corresponding to a random geometry or to Hamilton--Jacobi equations with random forcing. A principal consequence of this randomness is the emergence of instability, manifested in the existence of multiple Busemann functions associated with the same growth rate. Our results show that this instability implies that not all horofunctions are Busemann functions; rather, the latter correspond precisely to the extremal harmonic functions, and the Martin boundary is strictly larger than its minimal part. For comparison, in the hypothetical absence of instability, every horofunction would be a Busemann function and the Martin boundary would be minimal.
\smallskip

{\bf Notation.}
$\Z$ is the set of integers, $\N$ is the set of positive integers, and $\R$ is the set of real numbers.
Given real numbers $x,s,y,t$, we use $s\wedge t=\min(s,t)$, $s\vee t=\max(s,t)$, and write $(x,s) \le (y,t)$ to mean $x \le y$ and $s \le t$. 
Then $(x,s) < (y,s)$ means $x < y$. 
Given an interval $I\subset\R$, a \emph{space-time path} is a  function $\gamma:I\to\R^2$ such that for all $t\in I$, the second coordinate of $\gamma(t)$ is equal to $t$. 
It will at times be convenient to abuse notation and regard $\gamma$
as the path $\gamma(I)$. Given a subinterval $J\subset I$, $\gamma|_J$ is the restriction of $\gamma$ to $J$.
Given two intervals $I_i\subset\R$ and two space-time paths $\gamma_i:I_i\to\R^2$, $i\in\{1,2\}$, $\gamma_1\preceq\gamma_2$
means $\gamma_1(t)\le\gamma_2(t)$
for all $t\in I_1\cap I_2$.

\section{Setting and main results}\label{sec:main}

Taking the limit $t \to \infty$ in \eqref{KPZfp}, we arrive at the following definition. Given a realization of the directed landscape $\mathcal L$, an \emph{eternal solution} is a function $\h:\R^2\to\R$ such that
\begin{align}\label{h-eternal}
\h(x,s)=\sup_{y\in\R}\{\mathcal L(x,s;y,t)+\h(y,t)\},
\qquad \text{for all } s<t \text{ and $x$ in $\R$}.
\end{align}

\begin{rmk}
One could allow $\h$ to take values in $[-\infty,\infty]$. However, if $\h(x,s)=-\infty$ for some $(x,s)\in\R^2$, then, by \eqref{h-eternal}, $\h(y,t)=-\infty$ for all $t>s$ and $y\in\R$. Fixing such a $t$ and applying \eqref{h-eternal} again, we get $\h(z,r)=-\infty$ for all $r<t$ and $z\in\R$, and hence $\h\equiv -\infty$.
Likewise, if $\h(y,t)=\infty$ for some $(y,t)\in\R^2$, then $\h(x,s)=\infty$ for all $s<t$ and $x\in\R$. Thus, apart from these degenerate cases, $\h$ is finite everywhere. For this reason, we restrict attention to real-valued eternal solutions.
\end{rmk}

Lemma \ref{eternal-cvx} below shows that the collection of eternal solutions forms a cone in the max-plus algebra (more formally, a semimodule over the semiring $(\R\cup\{-\infty\},\max,+)$). In particular, if $\h$ is an eternal solution, then so is $\h+c$ for any $c\in\R$. Consequently, upon fixing a normalization---for instance, by requiring $\h(x_0,s_0)=0$ for some $(x_0,s_0)\in\R^2$---the set of normalized eternal solutions becomes a convex set in the max-plus sense. A natural question is whether a Choquet-type theorem holds, namely, whether every normalized eternal solution can be expressed as a max-plus convex combination of extremal ones. Theorem \ref{thm:main1} below answers this question in the affirmative. 

Part of answering the above question is to identify the extremal eternal solutions. To this end, we consider several perspectives on the directed landscape, beginning by viewing it as a last-passage percolation model.

Given a space-time path $\gamma:[s,t]\to\R^2$, define its passage time by
\begin{align}\label{L(gamma)}
\mathcal L(\gamma)
=\inf_{k\in\N}\;\inf_{s=r_0<r_1<\cdots<r_{k-1}<r_k=t}
\sum_{i=1}^k \mathcal L\bigl(\gamma(r_{i-1});\gamma(r_i)\bigr).
\end{align}
    By  \cite[Corollary 10.7]{Dau-Ort-Vir-22}, almost surely, there exists a finite (random) constant $C>0$ such that 
    \begin{align}\label{L-bound}
    \Bigl|\mathcal L(x,s;y,t)+\frac{(y-x)^2}{t-s}\Bigr|\le
    C|t-s|^{1/3}\log^{4/3}\Bigl(\frac{2\lVert(x,s,y,t)\rVert+2}{t-s}\Bigl)\log^{2/3}(\lVert(x,s,y,t)\rVert+2),
    \end{align}
    for all $x,s,y,t\in\R^2$ with $t>s$. Here, $\lVert(x,s,y,t)\rVert=\sqrt{x^2+s^2+y^2+t^2}$.
Because of this bound, if $\gamma$ is not continuous, then $\mathcal L(\gamma)=-\infty$. 
Thus,
\begin{align}\label{LPP}
\mathcal L(x,s\,;y,t)= \sup_{\gamma}\mathcal L(\gamma),
\end{align}
where the supremum can be restricted to all continuous space-time paths $\gamma:[s,t]\to\R^2$ with $\gamma(s)=(x,s)$ and $\gamma(t)=(y,t)$. This formulation interprets the directed landscape as a continuum space-time last-passage percolation model. Accordingly, a path attaining the supremum in \eqref{LPP} is called a \emph{point-to-point geodesic} from $(x,s)$ to $(y,t)$. These continuous paths exist by Lemma 13.2 in \cite{Dau-Ort-Vir-22}.
As $t\to\infty$, point-to-point geodesics converge to semi-infinite paths, which we call \emph{semi-infinite geodesics}. More precisely, a semi-infinite geodesic from $(x,s)\in\R^2$ is a continuous space-time path $\gamma:[s,\infty)\to\R^2$ with $\gamma(s)=(x,s)$ such that $\gamma$ is a geodesic between any two of its points.

The next set of properties of semi-infinite geodesics can be anticipated from the connection to Hamilton--Jacobi equations. Indeed, via \eqref{LPP},  \eqref{KPZfp} can be viewed as a Hopf--Lax--Oleinik variational formula, with $\mathcal L(\gamma)$ being the action functional. In this interpretation, maximizing geodesics correspond to characteristic lines, and semi-infinite geodesics correspond to characteristic lines of eternal solutions. This perspective suggests considering eternal solutions that possess a conserved spatial growth rate (or asymptotic velocity), in the sense that $\lim_{|x|\to\infty} x^{-1}\h(x,t)$ exists and is independent of $t$. In this case, the associated characteristic lines are expected to be directed. Moreover, one expects these characteristic lines to exhibit hyperbolicity, which in the directed landscape manifests as coalescence. We now state the rigorous properties of semi-infinite geodesics. These properties all hold on a single full probability event.

Theorem 2.5(i) of \cite{Bus-Sep-Sor-24} asserts that, almost surely, every semi-infinite geodesic $\gamma$ is $\tht$-directed for some $\tht\in\R$, that is, $t^{-1}\gamma(t)\to(\tht,1)$ as $t\to\infty$.
Combining results from \cite{Bus-Sep-Sor-24,Bus-25-,Dau-25}, one can organize the family of all semi-infinite geodesics into a process
\begin{align}\label{geodesics}
\bigl\{\geo\from{(x,s)}\dir{S}{\tht}{\sig}:(x,s)\in\R^2,\,S\in\{L,M,R\},\,\tht\in\R,\,\sigg\in\{-,+\}\bigr\}.
\end{align}
Almost surely, for any $x,s,\tht\in\R$, the geodesics 
$\geo\from{(x,s)}\dir{L}{\tht}{-}$ and $\geo\from{(x,s)}\dir{R}{\tht}{+}$ 
are, respectively, the leftmost and rightmost $\tht$-directed geodesics from $(x,s)$. By \cite[Lemma B.1]{Ras-Swe-26-b-}, planarity induces an ordering on these paths: almost surely,
for all $s$, $x<y<z$, and $\tht$ in $\R$, 
and all $\sigg\in\{-,+\}$ and $S\in\{L,M,R\}$,
\begin{align}
\geo\from{(x,s)}\dir{R}{\tht}{\sig}\preceq\geo\from{(y,s)}\dir{L}{\tht}{\sig}
\preceq\geo\from{(y,s)}\dir{M}{\tht}{\sig}
\preceq\geo\from{(y,s)}\dir{R}{\tht}{\sig}
\preceq\geo\from{(z,s)}\dir{L}{\tht}{\sig}
\quad\text{and}\quad
\geo\from{(x,s)}\dir{S}{\tht}{-}\preceq\geo\from{(x,s)}\dir{S}{\tht}{+}.\label{geo:mono}
\end{align}Depending on the configuration, the three geodesics $\geo\from{(x,s)}\dir{S}{\tht}{\sig}$, $S\in\{L,M,R\}$, may coincide, only two may be distinct, or all three may be distinct. 

The distinction between leftmost and rightmost semi-infinite geodesics corresponds to the presence of shock points, familiar from inviscid Hamilton--Jacobi equations: points $(x,s)$ from which $\geo\from{(x,s)}\dir{L}{\tht}{\sig}$ and $\geo\from{(x,s)}\dir{R}{\tht}{\sig}$ immediately separate. The paper \cite{Ras-Swe-26-b-} provides a detailed analysis of these points and the configurations of semi-infinite geodesics emanating from them.

Crucially, the sign distinction is not merely a technicality but an essential feature of the model. Namely, by (3.12) and (3.29) in \cite{Ras-Swe-26-b-}, for almost every $\w$ there exists an $\w$-dependent countable dense set $\baddir \subset \R$ such that, for all $x,s \in \R$ and $S \in \{L,M,R\}$,
\begin{align}\label{sign}
\begin{split}
&\tht\notin\baddir\Longrightarrow\geo\from{(x,s)}\dir{S}{\tht}{-}=\geo\from{(x,s)}\dir{S}{\tht}{+}\text{ (in which case we may drop the sign and write $\geo\from{(x,s)}\dir{S}{\tht}{}$)},\\
&\tht\in\baddir\Longrightarrow\geo\from{(x,s)}\dir{S}{\tht}{-}(t)<\geo\from{(x,s)}\dir{S}{\tht}{+}(t)\quad\text{for all large enough }t.
\end{split}
\end{align}

The geodesics in \eqref{geodesics} form coalescing families: By Theorem 7.1 and Remark 7.2 in \cite{Bus-Sep-Sor-24}, for all $x, s, y, t, \tht \in \R$,  $\sigg \in \{-,+\}$, and $S, S' \in \{L, M, R\}$, we have that
\begin{align}
\begin{split}
&\text{if }\geo\from{(x,s)}\dir{S}{\tht}{\sig}(r)
=
\geo\from{(y,t)}\dir{S'}{\tht}{\sig}(r)
\text{ for }r \ge s \vee t \text{ with }r> s\wedge t\text{, then }
\geo\from{(x,s)}\dir{S}{\tht}{\sig}\big|_{[r,\infty)}
=\geo\from{(y,t)}\dir{S'}{\tht}{\sig}\big|_{[r,\infty)}.
\end{split}\label{geo:coal1}\\[4pt]
&\text{Furthermore, there exists an $r$ as in \eqref{geo:coal1}.}
\label{geo:coal2}
\end{align}

This coalescence allows one to recover the eternal solution whose characteristic lines are the geodesics $\geo\from{(x,s)}\dir{S}{\tht}{\sig}$, $(x,s)\in\R^2$, $S\in\{L,M,R\}$. Specifically, for $x_0,s_0,x,s,\tht\in\R$ and $\sigg\in\{-,+\}$, define
\begin{align}\label{W-def}
W^{\tht\sig}(x,s;x_0,s_0)
=
\lim_{r\to\infty}\Bigl(\mathcal L\bigl(x,s;\geo\from{(x_0,s_0)}\dir{S}{\tht}{\sig}(r)\bigr)
-
\mathcal L\bigl(x_0,s_0;\geo\from{(x_0,s_0)}\dir{S}{\tht}{\sig}(r)\bigr)\Bigr).
\end{align}
The limit exists due to \eqref{geo:coal2}; in fact, it agrees with the prelimit expression for any $r$ past the coalescence time. Moreover, the definition is independent of the choice of $S$. 
Using \eqref{composition}, one checks \cite[Theorem 5.1(iv)]{Bus-Sep-Sor-24} that, almost surely, for any $x_0,s_0,x,s,\tht\in\R$, $\sigg\in\{-,+\}$, and $t>s$,
\begin{align}\label{W-eternal}
W^{\tht\sig}(x,s;x_0,s_0)=\sup_{y\in\R}\{\mathcal L(x,s;y,t)+W^{\tht\sig}(y,t;x_0,s_0)\}.
\end{align}
Thus, $(x,s)\mapsto W^{\tht\sig}(x,s;x_0,s_0)$ is an eternal solution that vanishes at $(x_0,s_0)$. By Lemma 5.12(iv) in \cite{Bus-Sep-Sor-24}, this solution has spatial growth rate $2\tht$: almost surely, for all $x_0,s_0,s,\tht\in\R$ and $\sigg\in\{-,+\}$,
\begin{align}\label{Wgrowth}
\lim_{|x|\to\infty} x^{-1} W^{\tht\sig}(x,s;x_0,s_0) = 2\tht.
\end{align}

We again see the significance of the sign distinction. By \cite[(5.7)]{Bus-Sep-Sor-24}, almost surely, for any $x_0,s_0$,
\begin{align}\label{exceptional}
\tht\in\baddir 
\;\Longleftrightarrow\;
\exists\, x\in\R:\, W^{\tht-}(x,s_0;x_0,s_0)\ne W^{\tht+}(x,s_0;x_0,s_0).
\end{align}
Thus, when $\tht\in\baddir$, there exist at least two distinct eternal solutions with the same spatial growth rate $2\tht$. The region where the two solutions differ is called the \emph{instability region}. See \cite{Ras-Swe-26-b-} for a detailed description of this region, its properties, and its relation to the shock points.

A natural question concerns the role of these special eternal solutions. To shed light on this, we turn to a geometric interpretation. By \eqref{composition}, the passage time $\mathcal L(x,s;y,t)$ is superadditive:
\begin{align}\label{superadditivity}
\mathcal L(x,r;z,t)
\ge
\mathcal L(x,r;y,s)+\mathcal L(y,s;z,t),
\qquad \text{for all } x,y,z \in \R \text{ and } s<r<t.
\end{align}
If not for the restriction $s<r<t$, the quantity $-\mathcal L(x,s;y,t)$ would define a random signed metric on $\R^2$. In this sense, the special eternal solutions introduced above are analogues of the functions studied by Busemann in \cite{Bus-55}, and we refer to them as \emph{Busemann eternal solutions}. Accordingly, we call the function $W^{\tht\sig}:\R^4\to\R$ a \emph{Busemann function}.
Viewed as functions of four variables, the functions $W^{\tht\sig}$ satisfy the cocycle property
\begin{align}\label{cocycle}
W^{\tht\sig}(x,s;z,r)+W^{\tht\sig}(z,r;y,t)=W^{\tht\sig}(x,s;y,t),
\quad \forall x,s,z,r,y,t,\tht\in\R,\ \sigg\in\{-,+\},
\end{align}
and are therefore also referred to as \emph{Busemann cocycles}. 
We now state our first main result.

\begin{thm}\label{thm:main1}
The following holds on a full probability event.
All eternal solutions can be represented as a max-plus convex mixture of countably many Busemann eternal solutions. Furthermore, all Busemann eternal solutions are extremal.
\end{thm}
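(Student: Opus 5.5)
The plan has two parts: a representation formula built from the characteristics of $\h$, and an extremality argument built from coalescence. For the representation I would use the characteristics of a given eternal solution $\h$. First I will show that from every $(x,s)$ there is a semi-infinite path $\gamma$ along which $\h$ is "calibrated", meaning $\h(\gamma(r))=\mathcal L(\gamma(r);\gamma(t))+\h(\gamma(t))$ for all $s\le r<t$. To construct it, I take maximizers $y_t$ in \eqref{h-eternal} for $t=s+1,s+2,\dots$ and paste point-to-point geodesics. Maximizers exist and stay in compact sets because \eqref{L-bound} forces $\h(\cdot,t)$ to grow at most quadratically with a constant strictly below the parabola. A diagonal or compactness argument (leftmost maximizers, monotone in $t$) then yields the infinite path. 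Such a path is a semi-infinite geodesic, so by \cite[Theorem 2.5(i)]{Bus-Sep-Sor-24} it is $\tht$-directed for some $\tht$, and by \eqref{geo:mono} it is sandwiched between $\geo\dir{L}{\tht}{-}$ and $\geo\dir{R}{\tht}{+}$ type geodesics.

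The next step is to compare $\h$ with a Busemann function along these characteristics. Since the calibrated path coalesces with $\geo\from{(x,s)}\dir{S}{\tht}{\sig}$ for a suitable sign, I would choose the sign by ordering when $\tht\in\baddir$, using \eqref{geo:coal2}. Comparing passage times past the coalescence point then gives
\[\h(x,s)-\h(x_0,s_0)=W^{\tht\sig}(x,s;x_0,s_0)\]
for base points $(x_0,s_0)$ on that path. Separately, the eternal equation implies the lower bound $\h(x,s)\ge \h(\gamma(r))+\mathcal L(x,s;\gamma(r))$ for any path. Letting $r\to\infty$ along any $\tht\sig$ geodesic from a point $(x_0,s_0)$ gives $\h(x,s)\ge \h(x_0,s_0)+W^{\tht\sig}(x,s;x_0,s_0)$, provided the increments $\h(\gamma(r))-\mathcal L(x_0,s_0;\gamma(r))$ are controlled. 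I would get that control from calibration when $(x_0,s_0)$ is itself a characteristic point.

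Combining the two bounds, I set $c_{\tht\sig}=\h(x_0,s_0)$ over characteristic base points and obtain
\[\h=\sup_{(\tht,\sig)\in D}\bigl(c_{\tht\sig}+W^{\tht\sig}(\cdot;x_0^{\tht\sig},s_0^{\tht\sig})\bigr),\]
where $D$ is the set of directions realized by characteristics of $\h$. The supremum is attained pointwise. Countability is the main obstacle. The plan is to use planarity: characteristics in distinct directions from points of a fixed time level $s$ form an ordered family, so each realized direction owns a nondegenerate spatial interval of starting points at some rational time. That gives an injection of $D$ into rational intervals. The delicate part is handling directions realized only from a single point at every time. There I would use the coalescence \eqref{geo:coal1} backward: at a later rational time, the calibrated paths in direction $\tht$ occupy an interval, because $\h$ at points between two characteristics of the same direction must follow that direction by ordering.

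For extremality, I suppose $W^{\tht\sig}(\cdot;x_0,s_0)=\max(f,g)$ with $f,g$ eternal and both $\le W^{\tht\sig}$. Applying the representation to $f$ and $g$ reduces the question to showing that $W^{\tht\sig}\ge c+W^{\eta\tau}$ everywhere, with equality somewhere, forces $(\eta,\tau)=(\tht,\sig)$ and $c=0$. Distinct growth rates are excluded by \eqref{Wgrowth}, since the inequality must hold as $x\to\pm\infty$. For $\eta=\tht$ with opposite signs, \eqref{exceptional} and the ordering \eqref{geo:mono} show the two cocycles cross. Once the directions match, equality at a single point propagates along the common geodesics by coalescence, and from there to all of $\R^2$.
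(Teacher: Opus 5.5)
Your representation step is the paper's argument in other words. Your calibrated paths are the $\h$-geodesics of Definition~\ref{def:h-geo}, and comparing $\h$ with $W^{\tht\sig}$ beyond a coalescence point is exactly how the paper obtains the coefficients $a^{\tht\sig}_{(x_0,s_0)}(\h)$ and \eqref{h-rep}.

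The genuine gap is countability. Your plan rests on the claim that every realized direction owns a nondegenerate interval of starting points at some rational time. Planarity and same-direction coalescence do not give this. They only show that, at a fixed time $s$, the map $x\mapsto\vartheta^{L,\h}_{(x,s)}$ is monotone, so each level set is an interval. Nothing in your argument prevents these intervals from being single points for a continuum of directions, i.e.\ the direction map being strictly increasing on some interval (a rarefaction-fan picture). Your remedy for that case is circular. ``Points between two characteristics of the same direction'' presupposes two distinct same-direction characteristics at the later time, and that is precisely what fails when a direction is realized at one point per time.

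The missing input is a quick-coalescence property of the directed landscape. For every $x$ there is $\e>0$ such that the leftmost $\h$-geodesics from $(z,s)$, $z\in(x-\e,x]$, coalesce with the one from $(x,s)$. This is Theorem~\ref{thm:h-geo}\eqref{thm:h-geo.j}, obtained from monotone convergence of $\h$-geodesics, Dini's theorem, and the overlap theorem of \cite{Bat-Gan-Ham-22}. It makes $x\mapsto\vartheta^{L,\h}_{(x,s)}$ constant on $(x-\e,x]$ and on $(x,x+\e)$. Hence its jumps are discrete and $\jumps^{L,\h}_s$ is countable (Proposition~\ref{prop:general-I}\eqref{prop:general-I.e}). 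Since $\jumps^{L,\h}_r\subset\jumps^{L,\h}_s$ for $r\le s$, all such directions lie in $\bigcup_{r\in\Z}\jumps^{L,\h}_r$. Restricting your set $D$ to directions of leftmost characteristics, which still attain $\h$ at every point, then gives the countable mixture.

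There are also three smaller points.

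\emph{Opposite signs in the extremality step.} With $\eta=\tht\in\baddir$ and opposite signs, \eqref{exceptional} and \eqref{W:mono} only show that $x\mapsto W^{\tht-}(x,s_0;x_0,s_0)-W^{\tht+}(x,s_0;x_0,s_0)$ is nonincreasing and nonconstant. It could vanish on one side of the touching point, so ``the cocycles cross'' is not justified. You can repair this in either of two ways. One is to use the two-sided divergence \eqref{(5.6)}, as in Lemma~\ref{aux:349}. The other is to argue at the touching point $p$. If $W^{\tht\sig'}(\cdot\,;p)\le W^{\tht\sig}(\cdot\,;p)$ with $\sigg'\ne\sigg$, the eternal equation for $W^{\tht\sig}$ forces every $W^{\tht\sig'}$-geodesic from $p$ to be a $W^{\tht\sig}$-geodesic. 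Theorem~\ref{thm:h-geo}\eqref{thm:h-geo.h} together with \eqref{geo:mono} then gives $\geo\from{p}\dir{L}{\tht}{-}=\geo\from{p}\dir{L}{\tht}{+}$ or $\geo\from{p}\dir{R}{\tht}{-}=\geo\from{p}\dir{R}{\tht}{+}$, contradicting \eqref{sign}.

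\emph{Existence of maximizers.} This needs continuity of $\h(\cdot,t)$ (Theorem~\ref{th:h-cont}), not only the growth bound.

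\emph{Identifying calibrated paths.} Calling a calibrated path a $W^{\tht\sig}$-geodesic uses the fact that \eqref{geodesics} exhausts all semi-infinite geodesics. Being sandwiched between $\tht$-directed Busemann geodesics does not by itself give coalescence with either of them.
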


The proof is given in Section \ref{sec:proofs}. The decomposition into Busemann eternal solutions is in Theorem \ref{thm:decomp}, while extremality comes from Theorem \ref{thm:min}. The fact that only countably many Busemann eternal solutions are required follows from parts \eqref{prop:general-I.a} and \eqref{thm:general-I.d} of Theorem \ref{thm:general-I} (see Remark \ref{rk:countable}).

In addition, Theorem \ref{thm:general-I} provides a detailed description of eternal solutions in terms of the Busemann eternal solutions appearing in their convex decomposition. This description is given via a tree of interfaces that coalesce backward in time and partition space-time into regions where the solution coincides with the corresponding Busemann eternal solutions.\smallskip

Having defined Busemann functions, we next follow \cite{Gro-81} and define \emph{horofunctions} (also known as \emph{generalized Busemann functions}) as limit points of the family of functions
\[
(x,s)\longmapsto \mathcal L(x,s;z_r,r)-\mathcal L(x_0,s_0;z_r,r),
\]
as $r\to\infty$, with convergence understood in the sense of local uniform convergence. 
The distinction between the two notions is that, in the definition of Busemann functions, the endpoints $(z_r,r)$ lie along a semi-infinite geodesic, whereas for horofunctions they are arbitrary. In the metric geometry literature, the set of horofunctions is known as the \emph{horofunction boundary}. Theorem \ref{Buslim} verifies that horofunctions are also eternal solutions. 
As explained in the introduction, a central question in metric geometry is whether all horofunctions are Busemann functions. 
Our second main result shows that this is not the case and provides a characterization of the set of horofunctions.

\begin{thm}\label{thm:main2}
The following holds on an event of full probability. A function $\h:\R^2\to\R$ is a horofunction if and only if it is an eternal solution and there exists $\tht\in\R$ such that $\h$ has spatial growth rate $2\tht$, that is,
for all $t\in\R$, $x^{-1}\h(x,t)\to2\tht$ as $|x|\to\infty$.
Moreover, if $\h$ is normalized to vanish at $(x_0,s_0)\in\R^2$, then it has spatial growth rate $2\tht$ if and only if there exist $a_-,a_+\in\R\cup\{-\infty\}$ with $a_-\vee a_+=0$ such that
\[\h(x,s)=(W^{\tht-}(x,s;x_0,s_0)+a_-)\vee(W^{\tht+}(x,s;x_0,s_0)+a_+), \quad \forall (x,s)\in\R^2.\]
\end{thm}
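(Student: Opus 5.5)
The proof splits into three implications, which close into a cycle: a horofunction is an eternal solution with spatial growth rate $2\tht$; such an eternal solution is a max of at most two Busemann solutions with the stated normalization; and every such max is a horofunction.

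\emph{Step 1: horofunctions have a growth rate.} Let $\h$ be the local uniform limit of $\h_r(x,s)=\mathcal L(x,s;z_r,r)-\mathcal L(x_0,s_0;z_r,r)$ along $r_k\to\infty$. By Theorem \ref{Buslim}, $\h$ is an eternal solution.
\begin{itemize}
\item First I would show that $z_{r_k}/r_k$ is bounded. If $|z_r|/r\to\infty$ along a subsequence, the parabolic term in \eqref{L-bound} gives $\h_r(x,s)-\h_r(x_0,s)\approx 2(x-x_0)z_r/(r-s)$, which diverges. This contradicts local uniform convergence.
\item Passing to a further subsequence, $z_{r_k}/r_k\to\tht$.
\item Next I would sandwich $\h_{r_k}$ using Busemann cocycles with directions $\tht'<\tht<\tht''$. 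The endpoint $(z_r,r)$ eventually lies strictly between the $\tht'$- and $\tht''$-directed geodesics from any fixed compact set. By planarity, \eqref{geo:mono}, and coalescence \eqref{geo:coal1}--\eqref{geo:coal2}, this yields the standard comparison: for $x<y$,
\[W^{\tht'}(y,s;x,s)\le \h_r(y,s)-\h_r(x,s)\le W^{\tht''}(y,s;x,s)\]
once $r$ is large. This uses the crossing argument: geodesics to $(z_r,r)$ from $x<y$ must cross the Busemann geodesics.
\item Passing to the limit, the increments of $\h$ are sandwiched between those of $W^{\tht'\pm}$ and $W^{\tht''\pm}$. Letting $\tht',\tht''\to\tht$ and using \eqref{Wgrowth}, $\h$ has growth rate $2\tht$ at every time $t$.
\end{itemize}

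\emph{Step 2: growth rate $2\tht$ gives the two-Busemann form.} Let $\h$ be an eternal solution vanishing at $(x_0,s_0)$ with growth rate $2\tht$.
\begin{itemize}
\item For $(x,s)$, take a maximizer $y_t$ in \eqref{h-eternal}; maximizers exist by \eqref{L-bound} and the linear growth of $\h$. Letting $t\to\infty$ yields semi-infinite characteristic geodesics from $(x,s)$. Their direction is forced to be $\tht$: a characteristic of direction $\tht'\neq\tht$ would, by \eqref{L-bound}, contradict the growth rate $2\tht$ through the Legendre duality between velocity and slope.
\item Hence every characteristic is a $\tht$-directed semi-infinite geodesic. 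By planarity it lies between $\geo\from{(x,s)}\dir{L}{\tht}{-}$ and $\geo\from{(x,s)}\dir{R}{\tht}{+}$, and by coalescence it eventually merges with either the $\tht-$ family or the $\tht+$ family.
\item Along a characteristic, $\h$ decreases exactly by $\mathcal L$. Define $a_\pm=\sup_{(x,s)}\bigl(\h(x,s)-W^{\tht\pm}(x,s;x_0,s_0)\bigr)$; the supremum over the sets whose characteristics merge into the $\pm$ family is a constant, by the cocycle property \eqref{cocycle}.
\item Since \eqref{h-eternal} forces $\h\ge W^{\tht\pm}+a_\pm$ everywhere, we get $\h=(W^{\tht-}+a_-)\vee(W^{\tht+}+a_+)$. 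Evaluating at $(x_0,s_0)$ gives $a_-\vee a_+=0$.
\item Conversely, such a max is eternal by Lemma \ref{eternal-cvx} and has growth rate $2\tht$ by \eqref{Wgrowth}.
\end{itemize}

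\emph{Step 3: every such max is a horofunction.}
\begin{itemize}
\item The cases $a_+=-\infty$ or $a_-=-\infty$ are Busemann functions, taking $z_r$ along a geodesic.
\item For finite $a_\pm$, if $\tht\notin\baddir$ the two Busemann functions agree and the max is again a Busemann function. So assume $\tht\in\baddir$, and take $z_r$ between $\geo\from{(x_0,s_0)}\dir{R}{\tht}{-}(r)$ and $\geo\from{(x_0,s_0)}\dir{L}{\tht}{+}(r)$ inside the instability region.
\item On a compact set, $\mathcal L(x,s;z_r,r)-\mathcal L(x_0,s_0;z_r,r)$ equals $\bigl(W^{\tht-}+c^-_r\bigr)\vee\bigl(W^{\tht+}+c^+_r\bigr)$ for large $r$. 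Here $c^\pm_r$ are the passage-time differences from the two coalescence points to $(z_r,r)$.
\item As $z_r$ sweeps the gap, $c^+_r-c^-_r$ varies continuously from $-\infty$ to $+\infty$. This requires showing that the gap between the $\tht-$ and $\tht+$ geodesics grows and that the passage-time difference diverges at the two ends.
\item By the intermediate value theorem, choose $z_r$ with $c^+_r-c^-_r=a_+-a_-$ exactly.
\end{itemize}

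I expect the main obstacle to be Step 2: controlling characteristics of an arbitrary eternal solution, in particular proving that they are $\tht$-directed and that only the two families $\tht\pm$ can occur. I would first establish the monotone sandwich of Step 1 for general eternal solutions using only \eqref{h-eternal}. Alternatively, I would invoke the decomposition of Theorem \ref{thm:decomp}/\ref{thm:general-I} and observe that a growth rate $2\tht$ kills every Busemann component of direction other than $\tht$, since a component of direction $\tht'\ne\tht$ would dominate at $x\to\pm\infty$ by \eqref{Wgrowth}.
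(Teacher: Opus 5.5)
Your overall organization matches the paper's. Your fallback for Step 2 (insert the growth rate into \eqref{h-rep} and eliminate every component with $\theta'\neq\theta$ via \eqref{Wgrowth}) is exactly the proof of Theorem \ref{thm:eternal+growth->horo}, and the converse growth claim does follow from \eqref{Wgrowth}.

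\textbf{Step 1 has a genuine gap in its first bullet.} The bound \eqref{L-bound} determines each of $\LL(x,s;z_r,r)$ and $\LL(x_0,s;z_r,r)$ only up to an error of order $r^{1/3}$ times logarithms. So it gives $\h_r(x,s)-\h_r(x_0,s)=2(x-x_0)z_r/r$ plus an uncontrolled term of that size. Nothing follows when $|z_r|/r\to\infty$ more slowly than $r^{1/3}$. The same objection applies to the ``Legendre duality'' argument in your main route for Step 2, so use the fallback there. The fix is the comparison in your own third bullet. If $z_{r_k}/r_k\to+\infty$, the endpoints eventually lie to the right of the $\theta''$-geodesics for every $\theta''$. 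Hence $W^{\theta''}(x+1,s;x,s)\le\h(x+1,s)-\h(x,s)$ for all $\theta''$. Moreover $W^{\theta''}(x+1,s;x,s)\uparrow\infty$ as $\theta''\uparrow\infty$: it is monotone by \eqref{W:mono} and has mean $2\theta''$. This contradicts finiteness of $\h$, and it is the paper's argument for Theorem \ref{Buslim}(a). Since you already cite Theorem \ref{Buslim}(b), parts (a) and (c) of that theorem, together with \eqref{Wgrowth}, give all of Step 1 directly; this is how the paper proceeds.

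\textbf{Step 3 takes a different route from the paper and can be completed, but as written it has two problems.}

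First, the identity $\h_r=(W^{\theta-}+c^-_r)\vee(W^{\theta+}+c^+_r)$ on $K$ needs a localization you do not state. For $r\ge T'$, and uniformly in $z$ in the gap, every point of $K$ must have a geodesic to $(z,r)$ that follows a $W^{\theta\pm}$-geodesic up to the coalescence time $T$. This is Lemma \ref{lm9.1}, the real technical input here (an overlap-convergence argument, not a consequence of coalescence alone). Given it, superadditivity gives
$\LL(x,s;z,r)=\max_\pm\{W^{\theta\pm}(x,s;\gamma^\pm(T))+\LL(\gamma^\pm(T);z,r)\}$ for $(x,s)\in K$. So $c^+_r-c^-_r$ equals a constant plus $\LL(\gamma^+(T);z,r)-\LL(\gamma^-(T);z,r)$, which is continuous in $z$.

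Second, this quantity does not sweep from $-\infty$ to $+\infty$. Set $F=W^{\theta-}(\,\cdot\,;x_0,s_0)-W^{\theta+}(\,\cdot\,;x_0,s_0)$. The value is at most $F(\gamma^+(T))$ at the left end of the gap and at least $F(\gamma^-(T))$ at the right end, with equality once $r$ is large. So the range does not grow with $r$. The intermediate value argument still works. By \eqref{W-eternal}, $F$ is nonincreasing along $\theta+$-geodesics and nondecreasing along $\theta-$-geodesics, so $[F(\gamma^+(T)),F(\gamma^-(T))]\supseteq[\inf_K F,\sup_K F]$. For $K\ni(x_0,s_0)$, any value of $c^+-c^-$ outside $[\inf_K F,\sup_K F]$ yields the same function on $K$ as the nearest endpoint of that interval. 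Alternatively, push $T$ later, since $F(\gamma^\mp(T))\to\pm\infty$ by \eqref{(5.6)} and this monotonicity.

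The paper avoids this issue differently. It bases $\gamma^\pm$ at the interface point $p=I^-(s_0)$, where $F(p)=a_+-a_-$. It then chooses $z_r$ so that $p$ is a tie point, which is equivalent to $c^+-c^-=a_+-a_-$, and verifies \eqref{claim1114} by hand. Once Lemma \ref{lm9.1} is available, your max-representation would make \eqref{claim1114} and the auxiliary compact $K'$ unnecessary. So the real difficulty lies in this localization, not in Step 2 as you anticipated.
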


The proof of Theorem \ref{thm:main2} is given in Section \ref{sec:proofs}. The second equivalence follows from Theorem \ref{thm:eternal+growth->horo} together with Lemma \ref{lm:h-growth-rate}, while the first equivalence follows from Theorems \ref{Buslim} and \ref{thm:convcomb}.\smallskip

We conclude this section with an interpretation of our results from the perspective of potential theory, which also explains the title of the paper.

Equation \eqref{h-eternal} shows that $\h$ is a fixed point of $\mathcal L$ in the max-plus sense, that is, it solves the max-plus linear equation $\mathcal L \h = \h$. Accordingly, eternal solutions are precisely the \emph{$\mathcal L$-harmonic} functions in the max-plus algebra. For simplicity, we suppress the explicit reference to the max-plus algebra and to the operator $\mathcal L$, and refer to such functions simply as \emph{harmonic}.

Viewing $\mathcal L(x,s;y,t)$ as the analogue of a Green's kernel, the collection of horofunctions plays the role of the Martin boundary familiar from the theory of Markov processes. 
The subset consisting of extremal harmonic functions is thus called the \emph{minimal Martin boundary}. Theorem \ref{thm:main1} shows that, in the directed landscape model, the minimal Martin boundary is given by the Busemann functions and that all harmonic functions are their max-plus convex combinations. Theorem \ref{thm:main2} further shows that, due to the presence of instability (i.e., $\baddir \ne \varnothing$), the full Martin boundary---consisting of all horofunctions---is strictly larger.


\section{Proofs}\label{sec:proofs}

Before proceeding, we record a useful monotonicity property of the Busemann functions, which follows from the geodesics ordering \eqref{geo:mono}: by Theorem 5.1(iii) in \cite{Bus-Sep-Sor-24}, on a full probability event, for any $s$, $x<y$, and $\tht<\tht'$ in $\R$, 
\begin{align}\label{W:mono}
W^{\tht'+}(x,s;y,s)\le W^{\tht'-}(x,s;y,s)\le W^{\tht+}(x,s;y,s)\le W^{\tht-}(x,s;y,s).
\end{align}

We now proceed with the development of the proofs of Theorems \ref{thm:main1} and \ref{thm:main2}. First, observe that \emph{max-plus linear combinations} of eternal solutions are again eternal solutions. Thus, the collection of eternal solutions is a max-plus cone (formally, a semimodule over the semiring $(\R\cup\{-\infty\},\max,+)$).

\begin{lem}\label{eternal-cvx}
Suppose $\h_i$, $i\in I$, is a family of eternal solutions. Let $a_i\in[-\infty,\infty)$, $i\in I$. Then 
\begin{align}\label{h-comb}
\h(x,s)=\sup_{i\in I}(\h_i(x,s)+a_i)
\end{align}
satisfies \eqref{h-eternal}.
\end{lem}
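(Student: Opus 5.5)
The plan is to verify \eqref{h-eternal} for $\h$ directly, by exchanging the two suprema: the one over $i\in I$ in \eqref{h-comb} and the one over $y\in\R$ in \eqref{h-eternal}. Fix $s<t$ and $x\in\R$. Since each $\h_i$ is an eternal solution, for every $i$,
\[
\h_i(x,s)+a_i=\sup_{y\in\R}\{\mathcal L(x,s;y,t)+\h_i(y,t)+a_i\},
\]
where the convention $-\infty+c=-\infty$ handles $a_i=-\infty$ trivially (both sides equal $-\infty$). Taking the supremum over $i\in I$ on both sides gives
\[
\h(x,s)=\sup_{i\in I}\sup_{y\in\R}\{\mathcal L(x,s;y,t)+\h_i(y,t)+a_i\}=\sup_{y\in\R}\Bigl\{\mathcal L(x,s;y,t)+\sup_{i\in I}(\h_i(y,t)+a_i)\Bigr\}=\sup_{y\in\R}\{\mathcal L(x,s;y,t)+\h(y,t)\}.
\]
Here we use that iterated suprema over $I\times\R$ may be taken in either order, and that adding the finite quantity $\mathcal L(x,s;y,t)$ commutes with a supremum over $i$. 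The latter holds even when the inner supremum equals $+\infty$ or $-\infty$, with the usual extended-real conventions.

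The only point needing care is the degenerate values. The statement only asserts that $\h$ satisfies the identity \eqref{h-eternal}, so the computation above is valid in $[-\infty,\infty]$ without further hypotheses. If desired, I would then add a remark using the earlier Remark: such an $\h$ is either identically $-\infty$ (for instance if all $a_i=-\infty$), identically $+\infty$, or finite everywhere, and hence a genuine eternal solution in the latter case. I expect no real obstacle; the argument is a routine interchange of suprema, and it only needs finiteness of $\mathcal L$, which is guaranteed by \eqref{L-bound}.
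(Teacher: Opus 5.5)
Your argument is correct and is the paper's proof: fix $s<t$, expand each $\h_i(x,s)+a_i$ using \eqref{h-eternal}, and interchange the suprema over $i\in I$ and $y\in\R$. One correction to your optional closing aside: the earlier Remark only shows that $\h=+\infty$ at one point forces $\h=+\infty$ at all earlier times, not that $\h\equiv+\infty$, so the trichotomy you state does not follow. For instance, in the deterministic analogue $\mathcal L(x,s;y,t)=-(y-x)^2/(t-s)$, the combination $\sup_{\tht}(2\tht x-\tht^2 s)$ of the eternal solutions $2\tht x-\tht^2 s$ is $+\infty$ for $s<0$ but equals $x^2/s$ for $s>0$. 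This does not affect the lemma itself.
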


\begin{proof} 
Write 
\begin{align*}
\h(x,s) 
&= \sup_{i\in I}(\h_i(x,s)+a_i)
=\sup_{i\in I}\sup_{y\in\R} \{\mathcal{L}(x,s;y,t)+\h_i(y,t)+a_i\}\\
&=\sup_{y\in\R} \{\mathcal{L}(x,s;y,t)+\h(y,t)\}.
\qedhere
\end{align*}
\end{proof}

A \emph{max-plus convex mixture} is an expression of the form \eqref{h-comb} with $\sup_i a_i = 0$. If the index set $I$ is countable, such a mixture is also called a \emph{convex combination}.

\begin{rmk}\label{rk:h-finite?}
For $\h$ in \eqref{h-comb} to be an eternal solution, we need to have $\h(x,s)\in\R$ for each $x,s\in\R$. 
This cannot be guaranteed for a general choice of the coefficient $a_i$.
If, e.g., $I$ is finite and $\h_i$ are all finite, then so is $\h$. See Remark \ref{rk:h-finite?2} for more on this.
\end{rmk}

Adding a constant to an eternal solution gives an eternal solution. Therefore, 
we can normalize an eternal solution $\h$ by requiring $\h(x_0,s_0)=0$ for some reference point $(x_0,s_0)$. 
This makes the set of (normalized) eternal solutions convex (in the max-plus algebra).  
An eternal solution $\h$ with $\h(x_0,s_0)=0$ is said to be \emph{extremal} if having $\h=(f+a)\vee(g+b)$ with $a\vee b=0$ and $f,g$ two eternal solutions with $f(x_0,s_0)=g(x_0,s_0)=0$ implies $f=\h$ or $g=\h$. 
We aim to identify the extreme eternal solutions and show that all (normalized) eternal solutions can be written as convex mixtures of the extreme ones. To achieve  this, we use the associated semi-infinite geodesics, which we now define. 

\begin{defn}\label{def:h-geo}
Given an eternal solution $\h$, a space-time path  $\gamma:[s,\infty)\to\R^2$ is an $\h$-geodesic if 
\begin{align}\label{L=h}
\mathcal L(\gamma(r);\gamma(t))=\h(\gamma(r))-\h(\gamma(t))\quad\text{for all }t>r\ge s.
\end{align}
\end{defn}

Theorem 5.9(iii) in \cite{Bus-Sep-Sor-24} asserts that for $S\in\{L,R\}$, $\tht\in\R$, $\sigg\in\{-,+\}$, and $(x,s)\in\R^2$, $\geo\from{(x,s)}\dir{S}{\tht}{\sig}$ is a $W^{\tht\sig}$-geodesic. Then the coalescence \eqref{geo:coal2} implies that the same holds for $S=M$.

We next justify calling $\gamma$ a geodesic, in Definition \ref{def:h-geo}.

\begin{lem}\label{lm:h-geo is geo}
   The following holds on a full probability event.
   Suppose $\h$ is an eternal solution. Let $s\in\R$. 
Let $\gamma:[s,\infty)\to\R^2$ be an $\h$-geodesic. Then, for any $t>r\ge s$, $\gamma|_{[r,t]}$ is a geodesic from $\gamma(r)$ to $\gamma(t)$.  Consequently, $\gamma$ is continuous.
\end{lem}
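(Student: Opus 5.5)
The plan is to show that for any $r=r_0<r_1<\cdots<r_k=t$ in $[s,\infty)$ the passage time of $\gamma|_{[r,t]}$ computed through this partition equals $\mathcal L(\gamma(r);\gamma(t))$. This makes the infimum in \eqref{L(gamma)} equal to $\mathcal L(\gamma(r);\gamma(t))$, so $\mathcal L(\gamma|_{[r,t]})=\mathcal L(\gamma(r);\gamma(t))$. That is exactly the statement that $\gamma|_{[r,t]}$ attains the supremum in \eqref{LPP}. Continuity then follows from the remark after \eqref{L-bound}: a discontinuous path has passage time $-\infty$, while $\mathcal L(\gamma(r);\gamma(t))$ is finite. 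We work on the full-probability event on which \eqref{composition}, \eqref{L-bound} and the facts recorded after \eqref{LPP} hold.

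The key computation is a telescoping sum. Applying \eqref{L=h} to each consecutive pair $r_{i-1}<r_i$ gives
\[
\sum_{i=1}^k \mathcal L\bigl(\gamma(r_{i-1});\gamma(r_i)\bigr)=\sum_{i=1}^k\bigl(\h(\gamma(r_{i-1}))-\h(\gamma(r_i))\bigr)=\h(\gamma(r))-\h(\gamma(t))=\mathcal L(\gamma(r);\gamma(t)).
\]
The last equality is \eqref{L=h} again, which is finite because $\h$ is real-valued. Since every partition gives the same value, the double infimum in \eqref{L(gamma)} equals $\mathcal L(\gamma(r);\gamma(t))$.

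It remains to connect this with the supremum in \eqref{LPP}. One subtlety is that \eqref{LPP} restricts to continuous paths, while we do not yet know $\gamma$ is continuous. I would handle this directly. If $\gamma|_{[r,t]}$ were discontinuous, then $\mathcal L(\gamma|_{[r,t]})=-\infty$ by the observation following \eqref{L-bound}. This contradicts the finite value just computed, so $\gamma|_{[r,t]}$ is continuous. Hence it is an admissible path in \eqref{LPP} whose passage time equals the supremum, i.e.\ a point-to-point geodesic. As this holds on every $[r,t]$, $\gamma$ is continuous on $[s,\infty)$.

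The main (mild) obstacle is making the ``discontinuous implies $-\infty$'' step precise. Near a jump time $u$ one picks partition points $r_{i-1}<u<r_i$ with $r_i-r_{i-1}\to0$ while the spatial displacement stays bounded below. Then \eqref{L-bound} forces $\mathcal L(\gamma(r_{i-1});\gamma(r_i))\to-\infty$ through the term $-(y-x)^2/(t-s)$. The other terms of the partition remain bounded above, since \eqref{L-bound} gives a uniform upper bound on the compact range of $\gamma$ there. Boundedness of the path comes from \eqref{L=h} together with the finiteness of $\h$. Since the paper already asserts this consequence, I would simply cite it.
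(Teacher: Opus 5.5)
Your proposal is correct and follows the paper's argument. You telescope \eqref{L=h} along an arbitrary partition to show that $\mathcal L$ is additive along $\gamma|_{[r,t]}$, and then deduce continuity from the fact that discontinuous paths have passage time $-\infty$. The only cosmetic difference is in closing the telescoping sum: you apply \eqref{L=h} to the pair $(r,t)$ and get equality directly, whereas the paper bounds $\h(\gamma(r))-\h(\gamma(t))\ge\mathcal L(\gamma(r);\gamma(t))$ via \eqref{h-eternal} and then uses superadditivity \eqref{superadditivity} for the reverse inequality.
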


\begin{proof}
Take any partition $r=s_0<s_1<\cdots<s_n=t$. Then, by \eqref{L=h},
    \[\sum_{i=0}^{n-1}\mathcal L(\gamma(s_i);\gamma(s_{i+1}))
     =\sum_{i=0}^{n-1}(\h(\gamma(s_i))-\h(\gamma(s_{i+1}))
    =\h(\gamma(r))-\h(\gamma(t))\ge\mathcal L(\gamma(r);\gamma(t)),\]
where the last inequality is from \eqref{h-eternal}. 
Together with the superadditivity \eqref{superadditivity}, we get that $\mathcal L$ is additive along $\gamma|_{[s,t]}$. Thus, this path is a geodesic between its endpoints. 
\end{proof}

The next lemma justifies calling $\gamma$ an $\h$-geodesic. It follows directly from \eqref{h-eternal} and \eqref{L=h}.

\begin{lem}\label{lm:maximizer}
   The following holds on a full probability event.
   Suppose $\h$ is an eternal solution. Let $s\in\R$. 
Let $\gamma:[s,\infty)\to\R^2$ be an $\h$-geodesic. Then, for any $t>r\ge s$, setting $u=y$ such that $(y,t)=\gamma(t)$ maximizes $u\mapsto\mathcal L(\gamma(r);u,t)+\h(u,t)$. 
\end{lem}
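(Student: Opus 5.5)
The claim follows by combining the eternal-solution identity \eqref{h-eternal} at $(\gamma(r),t)$ with the defining property \eqref{L=h} of an $\h$-geodesic. The point is that the value of the variational problem at $u=y$ equals the supremum, so $u=y$ is a maximizer. No probabilistic input is needed beyond the full-probability event on which $\mathcal L$ is defined and finite and \eqref{composition} holds. The statement is deterministic given an eternal solution $\h$ and an $\h$-geodesic $\gamma$.

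Fix $t>r\ge s$ and write $\gamma(r)=(x,r)$ and $\gamma(t)=(y,t)$.

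\emph{Upper bound.} Applying \eqref{h-eternal} with the pair of times $r<t$ and the spatial point $x$ gives, for every $u\in\R$,
\[
\mathcal L(x,r;u,t)+\h(u,t)\le \sup_{v\in\R}\{\mathcal L(x,r;v,t)+\h(v,t)\}=\h(x,r).
\]

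\emph{Attainment at $u=y$.} By \eqref{L=h} with these $r<t$,
\[
\mathcal L(x,r;y,t)+\h(y,t)=\h(x,r)-\h(y,t)+\h(y,t)=\h(x,r).
\]
So at $u=y$ the function $u\mapsto \mathcal L(\gamma(r);u,t)+\h(u,t)$ attains the supremum $\h(x,r)$, and therefore $u=y$ is a maximizer. Here $\h$ is real-valued by the standing convention, so the cancellation $-\h(y,t)+\h(y,t)=0$ is legitimate.

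There is no real obstacle. The only points to watch are that \eqref{h-eternal} is applied at the correct base point $\gamma(r)$ and for the ordered pair $r<t$, and that \eqref{L=h} is used for the same pair $(r,t)$. Lemma \ref{lm:h-geo is geo} is not needed for this argument.
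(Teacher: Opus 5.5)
Your proposal is correct and follows the paper's approach: the paper simply notes that the lemma follows directly from \eqref{h-eternal} and \eqref{L=h}. Your upper bound from \eqref{h-eternal} at base point $\gamma(r)$, combined with attainment at $u=y$ via \eqref{L=h}, is exactly that argument written out.
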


To ensure the existence of $\h$-geodesics, we need to invoke a sub-quadratic growth bound. 

\begin{thm}\label{th:h-growth}
On a full-probability event, any eternal solution $\h$ satisfies
\begin{align}\label{h-growth}
    \lim_{|y|\to\infty} |y|^{-1}\bigl(\sup_{t'\le t\le t''}\h(y,t)-ay^2\bigr) = -\infty
    \qquad \text{for all $t'<t''$ and $a>0$ in $\R$}.
\end{align}
Furthermore, for any $x$ and $s<t$ in $\R$,
       \begin{align}\label{L+h bound}
       \lim_{|y|\to\infty}|y|^{-1}\bigl(\mathcal L(x,s;y,t)+\h(y,t)\bigr)=-\infty.
       \end{align}
Consequently, there exists a finite $K=K(\h,\w,x,s,t)>0$ such that for any $x$ and $s<t$ in $\R$,
\[\sup_{y\in\R}\{\mathcal L(x,s;y,t) + \h(y,t)\}=\sup_{|y|\le K}\{\mathcal L(x,s;y,t) + \h(y,t)\}.\]
\end{thm}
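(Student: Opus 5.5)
The approach is to read the growth bound off the eternal equation \eqref{h-eternal}, which gives a lower bound on $\h$ at earlier times in terms of its values at later times. Fix $t'<t''$, $a>0$, and a point $(y,t)$ with $t\in[t',t'']$. For any earlier time $s<t'$ and any $x$, \eqref{h-eternal} gives
\[\h(x,s)\ge \mathcal L(x,s;y,t)+\h(y,t),\]
so
\[\h(y,t)\le \h(x,s)-\mathcal L(x,s;y,t).\]
By \eqref{L-bound}, $-\mathcal L(x,s;y,t)\le (y-x)^2/(t-s)+C|t-s|^{1/3}\,\mathrm{polylog}$, and this is uniform in $t\in[t',t'']$. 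Taking $x=0$ and $s<t'$ gives
\[\h(y,t)\le \h(0,s)+\frac{y^2}{t-s}+o(|y|).\]
This is quadratic with coefficient at most $1/(t'-s)$. Since $s$ can be taken very negative, this coefficient can be made smaller than $a$. Hence $\sup_{t\in[t',t'']}\h(y,t)-ay^2\le \h(0,s)-(a-\tfrac{1}{t'-s})y^2+o(|y|)$, and dividing by $|y|$ sends this to $-\infty$. This gives \eqref{h-growth}.

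The claimed full-probability event is simply the event on which \eqref{L-bound} holds. Nothing depends on $\h$, so the statement holds simultaneously for all eternal solutions. The logarithmic error terms are $o(|y|)$ uniformly for $t$ in a compact interval and $s$ fixed, because $t-s\ge t'-s>0$ is bounded below.

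For \eqref{L+h bound}, fix $x$ and $s<t$. Using \eqref{L-bound} again,
\[\mathcal L(x,s;y,t)\le -\frac{(y-x)^2}{t-s}+o(|y|).\]
Applying \eqref{h-growth} on the degenerate interval $[t,t]$, or any interval containing $t$, with $a=\tfrac{1}{2(t-s)}$ gives $\h(y,t)\le \tfrac{y^2}{2(t-s)}+o(|y|)$, and in fact minus something superlinear. Summing the two bounds,
\[\mathcal L(x,s;y,t)+\h(y,t)\le -\frac{y^2}{2(t-s)}+O(|y|),\]
so the ratio with $|y|$ goes to $-\infty$.

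The final claim follows because the left side of the supremum identity is the finite number $\h(x,s)$, while the function $y\mapsto \mathcal L(x,s;y,t)+\h(y,t)$ tends to $-\infty$ as $|y|\to\infty$. So outside some compact interval $|y|\le K$ it lies strictly below $\h(x,s)-1$, and the supremum is attained in $[-K,K]$.

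One minor point needs checking: whether we need continuity of $\h$ for the supremum to be attained. We do not, since only the restriction of the supremum to a compact set is claimed. The main obstacle is the uniformity of the error in \eqref{L-bound} over $t\in[t',t'']$ together with the choice of $s$ far in the past. This is handled by fixing $s$ first, depending on $a$, so that the quadratic coefficient $1/(t'-s)<a$, and only then letting $|y|\to\infty$.
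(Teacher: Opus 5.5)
Your proof is correct and follows essentially the paper's argument: both use \eqref{h-eternal} backward in time, $\h(y,t)\le\h(x,s)-\mathcal L(x,s;y,t)$, together with the quadratic bound \eqref{L-bound}, and your treatment of \eqref{L+h bound} and of the restriction to $|y|\le K$ matches the paper's. The only difference is minor: the paper takes $s=t'-1/a$, so the quadratic coefficient can equal $a$, and then argues by contradiction, shifting the base point $x$ to overcome the linear slack; you instead push $s$ further back so that $1/(t'-s)<a$, which leaves a quadratic margin and lets you argue directly with $x=0$.
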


\begin{proof}
    Suppose that \eqref{h-growth} fails for some $a>0$ and $t_1<t_2$. Then there exist $b>0$ and sequences $t_n\in[t',t'']$ and $y_n$ with $|y_n|\to\infty$ such that $\h(y_n,t_n)-a y_n^2 \ge -b|y_n|$ for all integers $n$.
    Set $s = t' - 1/a$. Then $t_n-s\ge1/a$ for all $n$. Choose $x$ with $|x| > b/(2a)$. By passing to a subsequence if necessary, we may assume that $y_n$ have the same sign for all $n$. Take $x$ to have that sign. Then, 
    \[-b|y_n| + 2a x y_n = -b|y_n| + 2a|x|\,|y_n| = (2a|x|-b)\,|y_n| \to \infty.\]
    Use \eqref{h-eternal} and \eqref{L-bound} to write
    \begin{align*}
    \h(x,s)&\ge \h(y_n,t_n)+\mathcal L(x,s;y_n,t_n)\\
    &\ge ay_n^2-b|y_n|-a(y_n-x)^2-o(|y_n|)=
    -b|y_n|+2axy_n-ax^2-o(|y_n|).
    \end{align*}
    Taking $n\to\infty$ gives $\h(x,s)=\infty$, contradicting the requirement of finiteness of eternal solutions. This proves \eqref{h-growth}.

  Applying \eqref{L-bound} and \eqref{h-growth} with $t'=t''=t$ and $a=1/(t-s)$, we have that for any $b>0$, there exists $c>0$ such that for all $|y|\ge c$,
    \begin{align*}
      \mathcal L(x,s;y,t)+\h(y,t)
       \le -a(y-x)^2+o(|y|)+a y^2-b|y|
       =2a yx-a x^2-b|y|+o(|y|).
     \end{align*}
    Divide by $|y|$, take it to $\infty$, then take $b\to\infty$ to get \eqref{L+h bound}.
\end{proof}

With the growth bound \eqref{h-growth}, Theorem \ref{thm:h-continuous} implies the spatial continuity of eternal solutions.

\begin{thm}\label{th:h-cont}
On a full probability event, any eternal solution is continuous on $\R^2$.
\end{thm}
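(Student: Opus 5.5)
The plan is to show that, for every $T\in\R$, $\h$ is continuous on $\R\times(-\infty,T)$, using the representation
\[
\h(x,s)=\sup_{y\in\R}\{\mathcal L(x,s;y,T)+\h(y,T)\},\qquad s<T,
\]
from \eqref{h-eternal}. Since $T$ is arbitrary, this covers all of $\R^2$. The terminal data $\varphi=\h(\cdot,T)$ need not be continuous a priori. What we know about it is the growth bound \eqref{h-growth} (with $t'=t''=T$), and it is finite everywhere. The argument is a localization: restrict the supremum to a compact set of $y$'s uniformly over a compact set of $(x,s)$, and then use the continuity of $\mathcal L$.

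\textbf{Step 1 (uniform localization).} Fix a compact box $B=[-A,A]\times[s_1,s_2]$ with $s_2<T$. We redo the estimate in the proof of Theorem \ref{th:h-growth}, keeping track of uniformity in $(x,s)\in B$.

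By \eqref{L-bound}, $\mathcal L(x,s;y,T)\le -(y-x)^2/(T-s)+o(|y|)$, and the $o(|y|)$ term is uniform over $(x,s)\in B$ because $T-s\in[T-s_2,T-s_1]$ is bounded away from $0$ and $\infty$. Since $T-s\ge T-s_2$, the quadratic term satisfies $-(y-x)^2/(T-s)\le -(y-x)^2/(T-s_1)$. Apply \eqref{h-growth} with $a=1/(T-s_1)$, so that $\h(y,T)\le ay^2-b|y|$ for large $|y|$. This gives, uniformly on $B$,
\[
\mathcal L(x,s;y,T)+\h(y,T)\le 2a|y|A - b|y| + o(|y|)\;\longrightarrow\;-\infty .
\]

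For a lower bound, fix $y_0=0$. The function $(x,s)\mapsto \mathcal L(x,s;0,T)+\h(0,T)$ is continuous, hence bounded below on $B$. It follows that there is a single $K$ such that, for all $(x,s)\in B$, the supremum may be restricted to $|y|\le K$.

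\textbf{Step 2 (continuity).} On $B$ we now have $\h(x,s)=\sup_{|y|\le K}F(x,s,y)$ with $F(x,s,y)=\mathcal L(x,s;y,T)+\h(y,T)$. Here $\mathcal L$ is jointly continuous on $B\times[-K,K]$, hence uniformly continuous there. The data $\h(\cdot,T)$ is finite on $[-K,K]$ and is bounded above there by the bound from Step 1. Since $\h(y,T)$ enters additively and does not depend on $(x,s)$, we obtain
\[
|\h(x,s)-\h(x',s')|\le \sup_{|y|\le K}|\mathcal L(x,s;y,T)-\mathcal L(x',s';y,T)|,
\]
and the right-hand side tends to $0$ as $(x',s')\to(x,s)$ by uniform continuity. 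So no regularity of $\h(\cdot,T)$ is needed; we only use that it is finite, which the paper assumes for eternal solutions. Note also that continuity in the time variable comes for free, because $T$ is fixed while $s$ varies inside $B$.

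\textbf{Main obstacle.} The main difficulty is making the tail estimate uniform in $(x,s)$ over compacts. This needs the random constant in \eqref{L-bound} to be global, which it is on the full-probability event, and the growth bound \eqref{h-growth} to be applied with a quadratic coefficient strictly matched against $1/(T-s_1)$. The full-probability event is the intersection of the ones in \eqref{L-bound}, continuity of $\mathcal L$, and Theorem \ref{th:h-growth}. None of these depends on $\h$, so the conclusion holds simultaneously for all eternal solutions.
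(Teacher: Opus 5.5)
Your argument is correct and is essentially the paper's. The paper also writes $\h$ via \eqref{h-eternal} with a later terminal time and uses \eqref{h-growth} and \eqref{L-bound} to dominate $\mathcal L(x,s;y,t)+\h(y,t)$ by a function of $y$ tending to $-\infty$ uniformly over a compact set of $(x,s)$; it then concludes by continuity of $\mathcal L$ through a max-plus dominated convergence theorem (Theorems \ref{thm:domcv} and \ref{thm:h-continuous}), in place of your explicit uniform $K$ and the bound $|\sup F-\sup G|\le\sup|F-G|$. One slip in Step 1: $-(y-x)^2/(T-s)\le-(y-x)^2/(T-s_1)$ holds because $T-s\le T-s_1$, not because $T-s\ge T-s_2$; the conclusion is unaffected.
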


We can now define leftmost and rightmost $\h$-geodesics and describe their properties.

\begin{defn}\label{def:LR h-geo}
Given an eternal solution $\h$, a starting point $(x,s)\in\R^2$, and $S\in\{L,R\}$, set $\geo\from{(x,s)}^{S,h}(s)=(x,s)$. For $t>s$, let $\geo\from{(x,s)}^{L,h}(t)$ and $\geo\from{(x,s)}^{R,h}(t)$ be, respectively, the infimum and the supremum of the set of maximizers of $y\mapsto\mathcal L(x,s;y,t)+\h(y,t)$.
\end{defn}

\begin{thm}\label{thm:h-geo}
   On a full probability event, the following statements hold for any eternal solution $\h$ and all $(x,s)\in\R^2$.
   \begin{enumerate} [label={\rm(\alph*)}, ref={\rm\alph*}]   \itemsep=3pt 
   \item\label{thm:h-geo.a} For any $t\ge s$, $-\infty<\geo\from{(x,s)}^{L,h}(t)\le\geo\from{(x,s)}^{R,h}(t)<\infty$.
   \item\label{thm:h-geo.b}  For any $S\in\{L,R\}$ and any $t>s$, setting $u=y$ such that $(y,t)=\geo\from{(x,s)}^{S,h}(t)$ maximizes $u\mapsto\mathcal L(x,s;u,t)+\h(u,t)$.
   \item\label{thm:h-geo.c}  For any $S\in\{L,R\}$, $\geo\from{(x,s)}^{S,h}$ is an $\h$-geodesic and is therefore a continuous space-time path.
   \item\label{thm:h-geo.d} For any $S\in\{L,R\}$ and $t\ge r\ge s$, $\geo\from{\geo\from{(x,s)}^{S,h}(r)}^{S,h}(t)=\geo\from{(x,s)}^{S,h}(t)$.
   \item\label{thm:h-geo.e} For any $z,r\in\R$, $S\in\{L,R\}$, and $t\ge s\vee r$, if $\geo\from{(x,s)}^{S,h}(t)=\geo\from{(z,r)}^{S,h}(t)$, then $\geo\from{(x,s)}^{S,h}(t')=\geo\from{(z,r)}^{S,h}(t')$ for all $t'\ge t$.
   \item\label{thm:h-geo.f} For any $z>x$ and $t\ge s$ in $\R$, for any $S\in\{L,R\}$, $\geo\from{(x,s)}^{S,h}(t)\le\geo\from{(z,s)}^{S,h}(t)$.
   \item\label{thm:h-geo.g} For any $t>t'\ge s$, $\geo\from{(x,s)}^{L,h}\big|_{[t',t]}$ is the leftmost geodesic between its endpoints and $\geo\from{(x,s)}^{R,h}\big|_{[t',t]}$ is the rightmost geodesic between its endpoints.
   \item\label{thm:h-geo.h} For any $\h$-geodesic $\gamma$ out of $(x,s)$ and any $t\ge s$, $\geo\from{(x,s)}\dir{L}{h}{}(t)\le\gamma(t)\le\geo\from{(x,s)}\dir{R}{h}{}(t)$.
   \item\label{thm:h-geo.i} Suppose $\gamma_n:[s,\infty)\to\R^2$ is a sequence of $\h$-geodesics that converges pointwise to $\gamma$. Then $\gamma$ is a semi-infinite $\h$-geodesic. 
   \item\label{thm:h-geo.j} For each $\e>0$, there exists $\delta>0$ such that if $z,z'\in(x-\delta,x]$, then  $\geo\from{(z,s)}\dir{L}{h}{}(t)=\geo\from{(x,s)}\dir{L}{h}{}(t)$ and $\geo\from{(z,s)}\dir{R}{h}{}(t)=\geo\from{(z',s)}\dir{R}{h}{}(t)$ for all $t\ge s+\e$ and if $z\in[x,x+\delta)$, then  $\geo\from{(z,s)}\dir{R}{h}{}(t)=\geo\from{(x,s)}\dir{R}{h}{}(t)$ and $\geo\from{(z,s)}\dir{L}{h}{}(t)=\geo\from{(z',s)}\dir{L}{h}{}(t)$ for all $t\ge s+\e$.
   \end{enumerate}
\end{thm}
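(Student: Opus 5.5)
The proof runs through the parts in order, using continuity of $\h$ (Theorem \ref{th:h-cont}), the localization in Theorem \ref{th:h-growth}, and continuity of $\mathcal L$.

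\textbf{Parts (a) and (b).} For $t>s$, the function $y\mapsto \mathcal L(x,s;y,t)+\h(y,t)$ is continuous. By \eqref{L+h bound} it tends to $-\infty$ as $|y|\to\infty$, so its supremum is attained and the set of maximizers is nonempty and compact. Hence the infimum and supremum of that set are finite maximizers, which gives (a) and (b).

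\textbf{Parts (c), (d), (e).} Write $\gamma=\geo\from{(x,s)}^{S,\h}$. For $s<r<t$, let $z=\gamma(r)$ and $y=\gamma(t)$, and compare with \eqref{h-eternal} via \eqref{composition}:
\[
\h(x,s)=\sup_{u,v}\{\mathcal L(x,s;u,r)+\mathcal L(u,r;v,t)+\h(v,t)\}.
\]
The first step is to show that $y$ is a maximizer for $\h(x,s)$ if and only if some $u$ attains both the maximum of $\mathcal L(x,s;u,r)+\h(u,r)$ and the maximum of $\mathcal L(u,r;\cdot,t)+\h(\cdot,t)$ at $y$. Combining this with the planar crossing argument gives (d): if the leftmost maximizer from $(z,r)$ at time $t$ differed from $y$, then paths starting at the extreme maximizers would cross. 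The crossing argument rests on the quadrangle inequality
\[
\mathcal L(a,s;b,t)+\mathcal L(a',s;b',t)\ge \mathcal L(a,s;b',t)+\mathcal L(a',s;b,t)
\]
for $a<a'$ and $b<b'$, which follows from planarity of geodesics.

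Part (d) then yields the $\h$-geodesic identity \eqref{L=h}: we get $\h(\gamma(r))=\mathcal L(\gamma(r);\gamma(t))+\h(\gamma(t))$ for $r<t$. Continuity of $\gamma$ follows from Lemma \ref{lm:h-geo is geo}. Part (e) is immediate from (d), since both paths restart from the common point.

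\textbf{Part (f).} Monotonicity follows from the same quadrangle inequality. If $x<z$ had leftmost maximizers $y_x>y_z$, then the inequality gives that $y_z$ is also a maximizer for $x$ (or $y_x$ for $z$), contradicting extremality.

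\textbf{Part (h).} Let $\gamma$ be any $\h$-geodesic from $(x,s)$. By Lemma \ref{lm:maximizer}, $\gamma(t)$ is a maximizer, so it lies between the infimum and the supremum of the maximizer set.

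\textbf{Part (g).} Take any geodesic $\pi$ from $\gamma(t')$ to $\gamma(t)$. Concatenating $\pi$ with $\gamma$ beyond $t$ is again an $\h$-geodesic, because \eqref{L=h} is additive along the concatenation. Then (h) applied from $\gamma(t')$, together with (d), squeezes $\pi$ between the leftmost and rightmost paths.

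\textbf{Part (i).} The relation \eqref{L=h} passes to pointwise limits, by continuity of $\mathcal L$ and $\h$.

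\textbf{Part (j).} This is the main obstacle. The plan is as follows.
\begin{itemize}
\item By (f), the paths from $(z,s)$ with $z\in(x-\delta,x]$ are monotone in $z$.
\item As $z\uparrow x$, they converge pointwise, and by (i) the limit is an $\h$-geodesic.
\item To upgrade pointwise convergence to exact equality for $t\ge s+\e$, use local finiteness of geodesic coalescence: on a full-probability event, the set of points at time $s+\e$ hit by geodesics from a compact interval at time $s$ is finite. This is the standard "geodesic networks have finitely many exit points" result in the directed landscape (Dauvergne--Ortmann--Virág, Bates--Ganguly--Hammond).
\item Hence, by monotonicity, the values at time $s+\e$ stabilize for $z$ near $x$, and (e) propagates the equality to all $t\ge s+\e$.
\item The one-sided identification with $\geo\from{(x,s)}^{L,\h}$ for $z\uparrow x$ uses the fact that the limit is an $\h$-geodesic squeezed below $\geo\from{(x,s)}^{L,\h}$ by (f) and above it by (h).
\end{itemize}
The right side $z\in[x,x+\delta)$ is symmetric.
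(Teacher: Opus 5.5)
Your proposal is correct, but the core of (c)--(f) is organized differently from the paper. The paper proves (c) first, by direct path surgery. It takes an arbitrary geodesic from $(x,s)$ to $\Gamma^{R,\h}_{(x,s)}(t)$ and shows its time-$r$ point is a maximizer (this is \eqref{aux416}). It then makes that geodesic intersect a geodesic from $\Gamma^{R,\h}_{(x,s)}(r)$ to the rightmost maximizer from that point, and extracts \eqref{L=h} through a chain of superadditivity identities. Part (d) is read off from that computation, and (f) follows from (e) plus continuity of the paths.

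You instead first prove the two-step characterization of maximizers. This is correct: a point-to-point geodesic from $(x,s)$ to $(y,t)$ supplies the intermediate $u$. You then encode planarity once, in the quadrangle inequality. After that, (d) and (f) become two-line exchange arguments: adding $\h$ at the two endpoints turns the inequality into a comparison of $\h$-values at the two starting points, forcing the uncrossed pairs to be optimal as well, which contradicts extremality of the leftmost or rightmost maximizer. Part (c) then drops out of (d) and (b). This is more modular, and it makes (f) independent of (e); the inputs are the same as the paper's (existence and continuity of point-to-point geodesics and \eqref{composition}). Your (g), via concatenation together with (h) and (d), is equivalent to the paper's use of \eqref{aux416}.

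For (j), the paper upgrades the monotone limit to locally uniform convergence by Dini's theorem and then applies the overlap theorem \cite[Theorem 1.18]{Bat-Gan-Ham-22} together with (e). You use finiteness of the time-$(s+\e)$ positions instead. As you stated it (``geodesics from a compact interval at time $s$''), that finiteness claim is false, because the endpoints are unconstrained. You need the endpoints to lie in a compact set at a time strictly after $s+\e$.

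Here (a) and (f) supply exactly that. For $z\in[x-1,x]$, the points $\Gamma^{S,\h}_{(z,s)}(s+2\e)$ lie in a compact interval, and the restrictions to $[s,s+2\e]$ are point-to-point geodesics by Lemma~\ref{lm:h-geo is geo}. In that form, finiteness follows from precompactness of such geodesics plus the same overlap theorem, so the two routes rest on the same input.

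Like the paper's proof, your argument gives constancy of $\Gamma^{R,\h}_{(z,s)}$ only for $z\in(x-\delta,x)$, open at $x$. That is the correct reading of (j): the paper's proof itself notes that the left limit of the rightmost $\h$-geodesics need not be $\Gamma^{R,\h}_{(x,s)}$.
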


\begin{proof}
    Theorems \ref{th:h-growth} and \ref{th:h-cont} imply that the maximizers exist and are bounded. This gives part \eqref{thm:h-geo.a}. 
    Part \eqref{thm:h-geo.b} comes from the continuity of $\mathcal L(x,s;\abullet,t)$ and $\h(t,\abullet)$.\smallskip
    
    Part \eqref{thm:h-geo.c}. We prove the claim for $S=R$, the proof being similar for $S=L$. 
   Take $t>s$ and let $\gamma$ be any geodesic from $(x,s)$ to $\geo\from{(x,s)}^{R,h}(t)$. Take $r\in(s,t)$. By part \eqref{thm:h-geo.b}, we we have 
    \begin{align}\label{aux586}
    \mathcal L\bigl(x,s;\geo\from{(x,s)}^{R,h}(t)\bigr)+h\bigl(\geo\from{(x,s)}^{R,h}(t)\bigr)=\h(x,s).
    \end{align}
    Since $\gamma$ is a geodesic, we also have
    \begin{align}\label{aux588}
    \mathcal L(x,s;\gamma(r))+\mathcal L(\gamma(r);\geo\from{(x,s)}^{R,h}(t))=\mathcal L\bigl(x,s;\geo\from{(x,s)}^{R,h}(t)\bigr).
    \end{align}
    Putting the two equations together, we get
    \begin{align}\label{aux409}
    \mathcal L(x,s;\gamma(r))+\mathcal L(\gamma(r);\geo\from{(x,s)}^{R,h}(t))+h\bigl(\geo\from{(x,s)}^{R,h}(t)\bigr)=\h(x,s).
    \end{align}

 On the other hand, \eqref{h-eternal} implies
   \[\mathcal L(x,s;\gamma(r))+\h(\gamma(r))\le \h(x,s)
   \quad\text{and}\quad
   \mathcal L\bigl(\gamma(r);\geo\from{(x,s)}^{R,h}(t)\bigr)+h\bigl(\geo\from{(x,s)}^{R,h}(t)\bigr)\le \h(\gamma(r)).\]
Together with \eqref{aux409}, we get that both of the above inequalities are in fact equalities. Thus 
\begin{align}\label{aux416}
\text{$u$ such that }(u,r)=\gamma(r)\text{ maximizes }u\mapsto\mathcal L(x,s;u,r)+\h(u,r).
\end{align}   
    
Now abbreviate
$(z,r)=\geo\from{(x,s)}^{R,h}(r)$ and $(y,t)=\geo\from{(z,r)}^{R,h}(t)$ and let $\tau$ be any geodesic from $(z,r)$ to $(y,t)$. From \eqref{aux416} and the definition of $\geo\from{(x,s)}^{R,h}(r)$, we have 
\begin{align}\label{aux453}
\gamma(r)\le(z,r)=\tau(r).
\end{align}
By part \eqref{thm:h-geo.b},
\begin{align}\label{aux:611}
\mathcal L(x,s;z,r)+\h(z,r)=\h(x,s)\quad\text{and}\quad
\mathcal L(z,r;y,t)+\h(y,t)=\h(z,r).
\end{align}
From this, we get 
$\mathcal L(x,s;z,r)+\mathcal L(z,r;y,t)+\h(y,t)=\h(x,s)$.
But \eqref{h-eternal} implies that $\h(x,s)\ge\mathcal L(x,s;y,t)+\h(y,t)$. Thus, we have 
$\mathcal L(x,s;z,r)+\mathcal L(z,r;y,t)\ge\mathcal L(x,s;y,t)$.
Together with the reverse inequality \eqref{superadditivity}, we get equality. Now we have
$\mathcal L\big(x,s;y,t)+\h(y,t)=\h(x,s)$,
which says that $u=y$ maximizes $u\mapsto\mathcal L(x,s;u,t)+h(u,t)$. Consequently, 
\begin{align}\label{aux467}
\tau(t)=(y,t)\le\geo\from{(x,s)}^{R,h}(t)=\gamma(t).
\end{align}
This, \eqref{aux453}, and the continuity of the paths imply that they must intersect at some time $r'\in[r,t]$. 

We continue the argument under the assumption that $r'\in(r,t]$, the argument when $r'=r$ being an easier version. 
By the same argument that led to \eqref{aux416}, we have 
$\mathcal L(z,r;\tau(r'))+\h(\tau(r'))=\h(z,r)$. 
This and the first equality in \eqref{aux:611} give
\[\mathcal L(x,s;z,r)+\mathcal L(z,r;\tau(r'))+\h(\tau(r'))=\h(x,s).\]
But \eqref{h-eternal} implies $\h(x,s)\ge\mathcal L(x,s;\tau(r'))+\h(\tau(r'))$. Thus, $\mathcal L(x,s;z,r)+\mathcal L(z,r;\tau(r'))\ge\mathcal L(x,s;\tau(r'))$.
This and superadditivity \eqref{superadditivity} give 
\begin{align}\label{aux433}
\mathcal L(x,s;z,r)+\mathcal L(z,r;\tau(r'))=\mathcal L(x,s;\tau(r')).
\end{align}

Since $\gamma$ is a geodesic, 
$\mathcal L(x,s;\gamma(r'))+\mathcal L(\gamma(r');\gamma(t))=\mathcal L(x,s;\gamma(t))$.
Together with \eqref{aux433} and $\gamma(r')=\tau(r')$, we get
$\mathcal L(x,s;z,r)+\mathcal L(z,r;\gamma(r'))+\mathcal L(\gamma(r');\gamma(t))=\mathcal L(x,s;\gamma(t))$.
Then, by \eqref{superadditivity},
\[\mathcal L(x,s;z,r)+\mathcal L(z,r;\gamma(t))\ge\mathcal L(x,s;\gamma(t)).\]
Using \eqref{superadditivity} again, we get that this must be an equality. 

Recalling that $(z,r)=\geo\from{(x,s)}^{R,h}(r)$ and $\gamma(t)=\geo\from{(x,s)}^{R,h}(t)$ and applying \eqref{aux586} and \eqref{aux:611}, we can write
\begin{align}
\mathcal L\bigl(\geo\from{(x,s)}^{R,h}(r);\geo\from{(x,s)}^{R,h}(t)\bigr)
&=\mathcal L(z,r;\gamma(t))=\mathcal L(x,s;\gamma(t))-\mathcal L(x,s;z,r)\notag\\
&=[\mathcal L(x,s;\gamma(t))+\h(\gamma(t))]-[\mathcal L(x,s;z,r)+\h(z,r)]+\h(z,r)-\h(\gamma(t))\notag\\
&=\h(x,s)-\h(x,s)+h\bigl(\geo\from{(x,s)}^{R,h}(r)\bigr)-h\bigl(\geo\from{(x,s)}^{R,h}(t)\bigr).\label{aux657}
\end{align}
This proves part \eqref{thm:h-geo.c}.\smallskip

Part \eqref{thm:h-geo.d}. Consider $t>r>s$. 
From \eqref{aux467}, $\geo\from{\geo\from{(x,s)}^{R,h}(r)}^{R,h}(t)=(y,t)\le\geo\from{(x,s)}^{R,h}(t)$. On the other hand, \eqref{aux657} implies that $u$ such that $(u,t)=\geo\from{(x,s)}^{R,h}(t)$ maximizes $u\mapsto\mathcal L\bigl(\geo\from{(x,s)}^{R,h}(r);u,t\bigr)+h(u,t)$. This implies $\geo\from{(x,s)}^{R,h}(t)\le\geo\from{\geo\from{(x,s)}^{R,h}(r)}^{R,h}(t)$.
Consequently, we have equality and part \eqref{thm:h-geo.d} is proved for the case $S=R$. The case $S=L$ works the same way.\smallskip

Part \eqref{thm:h-geo.e} follows from part \eqref{thm:h-geo.d}. Part \eqref{thm:h-geo.f} follows from part \eqref{thm:h-geo.e} and the continuity of the paths.\smallskip

Part \eqref{thm:h-geo.g}. We prove the claim for $S=R$, $S=L$ being similar. When $t'=s$, the claim follows from \eqref{aux416} and the fact that $\geo\from{(x,s)}^{R,h}(r)$ is the supremum of all the maximizers of $u\mapsto\mathcal L(x,s;u,r)+\h(u,r)$. The case $t'>s$ follows from the case $t'=s$ and the restarting formula from part \eqref{thm:h-geo.d}. 
\smallskip

Part \eqref{thm:h-geo.h} follows directly from the definitions: the path $\gamma(t)$ satisfies 
$\mathcal L(x,s;\gamma(t),t)+\mathfrak{h}(\gamma(t),t)=\mathfrak{h}(x,s)$,
and therefore maximizes $y\mapsto \mathcal L(x,s;y,t)+\h(y,t)$. By definition, $\geo\from{(x,s)}^{L,h}(t)$ and $\geo\from{(x,s)}^{R,h}(t)$ are, respectively, the infimum and supremum of the set of such maximizers.\smallskip

Part \eqref{thm:h-geo.i} follows from Definition \ref{def:h-geo}, the continuity of $\mathcal L$, and the continuity of $\h$ in Lemma \ref{th:h-cont}.\smallskip

Part \eqref{thm:h-geo.j}. We prove the claim for $z \nearrow x$, the other case being analogous. By part \eqref{thm:h-geo.f}, for any $S \in \{L,R\}$ and $t \ge s$, $\geo\from{(z,s)}^{S,h}(t)$ is nondecreasing in $z$. Hence, these geodesics converge pointwise to a limit. By part \eqref{thm:h-geo.i}, this limit is an $\h$-geodesic $\gamma$.
Applying part \eqref{thm:h-geo.f} again, we obtain in the case $S = L$ that $\gamma \preceq \geo\from{(x,s)}^{L,h}$. Together with part \eqref{thm:h-geo.h}, this implies that $\gamma = \geo\from{(x,s)}^{L,h}$. When $S = R$, the limit $\gamma$ is still an $\h$-geodesic, but need not coincide with $\geo\from{(x,s)}^{L,h}$ or $\geo\from{(x,s)}^{R,h}$.
In either case, the limit is continuous, and thus Dini's theorem \cite[Theorem 7.13]{Rud-76} implies convergence is locally uniform. The claim now follows from Theorem 1.18 in \cite{Bat-Gan-Ham-22} and part \eqref{thm:h-geo.e}.
\end{proof}

Since \eqref{geodesics} covers all the semi-infinite geodesics, we have that, on a full probability event, for any $\h$-geodesic out of $(y,t)\in\R^2$, there exist $\tht\in\R$ and $\sigg\in\{-,+\}$ such that this is a $W^{\tht\sig}$-geodesic. By \eqref{geo:coal2}, this geodesic will coalesce with $\geo\from{(x_0,s_0)}\dir{S'}{\tht}{\sig}$, for any $(x_0,s_0)\in\R^2$ and $S'\in\{L,M,R\}$. From this, we next show that any eternal solution is a convex mixture of Busemann eternal solutions. 

Consider $x_0,s_0\in\R$ and an eternal solution $\h$ with $\h(x_0,s_0)=0$. Take $\tht\in\R$ and $\sigg\in\{-,+\}$.
If there exist $(y,t)\in\R^2$ and an $\h$-geodesic  out of $(y,t)$ that coalesces with some (and hence any) $W^{\tht\sig}$-geodesic out of $(x_0,s_0)$, then let
\begin{align}\label{def:a}
a^{\tht\sig}_{(x_0,s_0)}(\h)=\h(y,t)-W^{\tht\sig}(y,t;x_0,s_0).
\end{align}
If such a geodesic does not exist, then set $a^{\tht\sig}_{(x_0,s_0)}(\h)=-\infty$.  Definition \eqref{def:a} is independent of $(y,t)$, as the next lemma shows.

\begin{lem}
The following holds on a full probability event.
For any $x_0,s_0\in\R$, any eternal solution $\h$ with $\h(x_0,s_0)=0$, and any $\tht\in\R$ and $\sigg\in\{-,+\}$, $a^{\tht\sig}_{(x_0,s_0)}(\h)$ does not depend on the choice of $(y,t)$. Furthermore, $a^{\tht\sig}_{(x_0,s_0)}(\h)\in[-\infty,0]$.
\end{lem}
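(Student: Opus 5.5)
Independence of $(y,t)$ comes from the fact that $\h$ and $W^{\tht\sig}$ both increase by exactly the passage time along a shared geodesic tail. Suppose $\gamma$ is an $\h$-geodesic out of $(y,t)$ and $\gamma'$ is an $\h$-geodesic out of $(y',t')$, and both coalesce with $W^{\tht\sig}$-geodesics out of $(x_0,s_0)$. By \eqref{geo:coal1}--\eqref{geo:coal2} these $W^{\tht\sig}$-geodesics coalesce with each other. Hence there is a time $r$ large enough that $\gamma(r)=\gamma'(r)=:(z,r)$, and this point also lies on some $W^{\tht\sig}$-geodesic $\eta$ which $\gamma$ follows on $[r_1,\infty)$ for some $r_1\le r$.

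The main step, and the only delicate one, is to show that $\h(y,t)-W^{\tht\sig}(y,t;x_0,s_0)=\h(z,r)-W^{\tht\sig}(z,r;x_0,s_0)$. The natural route uses that $\gamma$ is both an $\h$-geodesic and, once we know it, a $W^{\tht\sig}$-geodesic on $[t,r]$, so that both functions drop by exactly $\mathcal L(y,t;z,r)$ between the two points. But $\gamma$ is a $W^{\tht\sig}$-geodesic only from the coalescence point on, so I will avoid needing that. Instead I write, for $u\ge r_1$ on the common tail, $\h(\gamma(t))-\h(\gamma(u))=\mathcal L(\gamma(t);\gamma(u))$ by \eqref{L=h}. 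By \eqref{h-eternal} for $W$ we have $W(\gamma(t))-W(\gamma(u))\ge\mathcal L(\gamma(t);\gamma(u))$ (abbreviating $W=W^{\tht\sig}(\cdot;x_0,s_0)$). This gives only an inequality: $\h(y,t)-W(y,t)\le \h(\gamma(u))-W(\gamma(u))$ for all $u$ beyond the coalescence.

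To get equality, I will argue that once $\gamma$ joins the $W$-geodesic $\eta$ at time $r_1$, the difference $\h-W$ is constant along $\gamma|_{[r_1,\infty)}$, since there both \eqref{L=h} statements hold. I then exploit the freedom in choosing $(y,t)$. Apply the inequality to the $\h$-geodesic restarted at $\gamma(u)$, which by \eqref{L=h} is still an $\h$-geodesic coalescing with $\eta$. Then compare two arbitrary starting points $(y,t)$, $(y',t')$ through their common tail value. Each gives $\le$ the common tail constant, so I still need the reverse inequality for the initial segment $[t,r_1]$.

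For the reverse inequality I will use \eqref{W-def} directly. Take $r$ past the coalescence of $\eta$ with $\geo\from{(x_0,s_0)}\dir{S}{\tht}{\sig}$ and compute
\[W(y,t)=\mathcal L(y,t;\gamma(r))-\mathcal L(x_0,s_0;\gamma(r))\ge \h(y,t)-\h(\gamma(r))-\mathcal L(x_0,s_0;\gamma(r)).\]
Actually the first term is exact, since $\gamma|_{[t,r]}$ is a geodesic by Lemma \ref{lm:h-geo is geo}, and $W(y,t)$ is defined with $\mathcal L(y,t;\cdot)$ evaluated at the geodesic point. The limit in \eqref{W-def} is attained at any $r$ where the endpoint lies on the coalesced geodesic, so $W(y,t)-W(\gamma(r))=\mathcal L(y,t;\gamma(r))=\h(y,t)-\h(\gamma(r))$. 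This yields equality and settles independence: every admissible $(y,t)$ gives the same value as the common tail.

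For $a\le0$, take $(y,t)=(x_0,s_0)$ in the comparison. By \eqref{h-eternal} for $\h$, and since $W(\gamma(r))-W(x_0,s_0)=-\mathcal L(x_0,s_0;\gamma(r))$ for $r$ on the coalesced geodesic, we get
\[0=\h(x_0,s_0)\ge \mathcal L(x_0,s_0;\gamma(r))+\h(\gamma(r))=\h(\gamma(r))-W(\gamma(r))=a.\]
The case $a=-\infty$ is trivial.
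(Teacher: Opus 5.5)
Your proof is correct and essentially the paper's. Like the paper, you evaluate $\h-W^{\tht\sig}(\cdot\,;x_0,s_0)$ at a far point on the common coalesced tail, using \eqref{L=h} along the $\h$-geodesic together with the exactness of the prelimit in \eqref{W-def}, and you get $a\le 0$ from \eqref{h-eternal} at $(x_0,s_0)$. The one-sided-inequality detour in your second and third paragraphs is unnecessary, since the computation in your fourth paragraph already gives equality (its ``$\ge$'' is an equality by \eqref{L=h}).
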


\begin{proof}
It suffices to consider the case where $a^{\tht\sig}_{(x_0,s_0)}(\h)>-\infty$. In this case, there exists a $(y,t)$ and an $\h$-geodesic $\gamma$ out of $(y,t)$ that coalesces with a $W^{\tht\sig}$-geodesic out of $(x_0,s_0)$. By the coalescence \eqref{geo:coal2}, all $W^{\tht\sig}$-geodesics out of $(x_0,s_0)$ coalesce with $\geo\from{(x_0,s_0)}\dir{L}{\tht}{\sig}$. Therefore, it is enough to consider this particular geodesic out of $(x_0,s_0)$.

 Suppose $(x,s)\in\R^2$ is another point from which an $\h$-geodesic coalesces with $\geo\from{(x_0,s_0)}\dir{L}{\tht}{\sig}$. By the coalescence \eqref{geo:coal2}, $\geo\from{(x,s)}\dir{L}{\tht}{\sig}$ and $\geo\from{(y,t)}\dir{L}{\tht}{\sig}$ also coalesce with $\geo\from{(x_0,s_0)}\dir{L}{\tht}{\sig}$. Then there exists a point $(z,r)$, $r>s\vee t$, that belongs simultaneously to these three geodesics and to the two $\h$-geodesics. Then
 \begin{align*}
    \h(x,s)-\h(y,t)
    &=\bigl(\mathcal L(x,s;z,r)+\h(z,r)\bigr)-\bigl(\mathcal L(y,t;z,r)+\h(z,r)\bigr)\\
    &=\bigl(\mathcal L(x,s;z,r)-\mathcal L(x_0,s_0;z,r)\bigr)-\bigl(\mathcal L(y,t;z,r)-\mathcal L(x_0,s_0;z,r)\bigr)\\
    &=W^{\tht\sig}(x,s;x_0,s_0)-W^{\tht\sig}(y,t;x_0,s_0).
    \end{align*}
Rearranging gives 
\begin{align}\label{aux707}
\h(x,s)-W^{\tht\sig}(x,s;x_0,s_0)=\h(y,t)-W^{\tht\sig}(y,t;x_0,s_0),
\end{align}
which proves the first claim. 
To see the second claim, take $(x,s)$, with $s>s_0$, to be a point on both $\geo\from{(x_0,s_0)}\dir{L}{\tht}{\sig}$ and $\gamma$. Such a point exists because the two geodesics coalesce.
The restriction $\gamma|_{[s,\infty)}$ is an $\h$-geodesic out of $(x,s)$ that coalesces with $\geo\from{(x_0,s_0)}\dir{L}{\tht}{\sig}$. Thus, we can apply \eqref{aux707} to get
\begin{align*}
a^{\tht\sig}_{(x_0,s_0)}(\h)
&=\h(y,t)-W^{\tht\sig}(y,t;x_0,s_0)
=\h(x,s)-W^{\tht\sig}(x,s;x_0,s_0)\\
&=\h(x,s)+W^{\tht\sig}(x_0,s_0;x,s)
=\h(x,s)+\mathcal L(x_0,s_0;x,s)\le \h(x_0,s_0)=0.\qedhere
\end{align*}
\end{proof}

We now have all the ingredients for the convex decomposition of eternal solutions.

\begin{thm}\label{thm:decomp}
    The following holds on a full probability event.
    For any $x_0,s_0\in\R$, and any eternal solution $\h$ with $\h(x_0,s_0)=0$, we have for all $x,s\in\R$
    \begin{align}\label{h-rep}
    \h(x,s)=\sup_{\tht\in\R,\siggg\in\{-,+\}}\{W^{\tht\sig}(x,s;x_0,s_0)+a^{\tht\sig}_{(x_0,s_0)}(\h)\}
    \end{align}
    and
    \begin{align}\label{supa}
    \sup_{\tht\in\R,\siggg\in\{-,+\}}a^{\tht\sig}_{(x_0,s_0)}(\h)=0.
    \end{align}
\end{thm}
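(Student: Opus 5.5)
We prove \eqref{h-rep} by matching inequalities, and then deduce \eqref{supa} by evaluating at $(x_0,s_0)$.

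\emph{Upper bound.} We show $\h(x,s)\ge W^{\tht\sig}(x,s;x_0,s_0)+a^{\tht\sig}_{(x_0,s_0)}(\h)$ for every $\tht,\sigg$. If the coefficient is $-\infty$ there is nothing to prove. Otherwise, pick $(y,t)$ and an $\h$-geodesic $\gamma$ out of $(y,t)$ that coalesces with $\geo\from{(x_0,s_0)}\dir{L}{\tht}{\sig}$. Choose $r>s\vee t\vee s_0$ so that $(z,r)=\gamma(r)$ lies on this Busemann geodesic. By the coalescence \eqref{geo:coal1}--\eqref{geo:coal2}, $\geo\from{(x,s)}\dir{L}{\tht}{\sig}$ also passes through a common point further along; enlarging $r$, we may assume it passes through $(z,r)$. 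Then, by \eqref{W-def},
\[W^{\tht\sig}(x,s;x_0,s_0)=\mathcal L(x,s;z,r)-\mathcal L(x_0,s_0;z,r).\]
Also $a^{\tht\sig}_{(x_0,s_0)}(\h)=\h(z,r)-W^{\tht\sig}(z,r;x_0,s_0)$, by the independence lemma applied to the restriction of $\gamma$ from $(z,r)$. Since $(z,r)$ lies on the Busemann geodesic from $(x_0,s_0)$, we have $W^{\tht\sig}(z,r;x_0,s_0)=-\mathcal L(x_0,s_0;z,r)$. Adding,
\[W^{\tht\sig}(x,s;x_0,s_0)+a^{\tht\sig}_{(x_0,s_0)}(\h)=\mathcal L(x,s;z,r)+\h(z,r)\le\h(x,s),\]
where the last inequality is \eqref{h-eternal}.

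\emph{Lower bound.} This is where existence of $\h$-geodesics is used. By Theorem \ref{thm:h-geo}\eqref{thm:h-geo.c}, $\geo\from{(x,s)}^{L,h}$ is an $\h$-geodesic, hence a semi-infinite geodesic by Lemma \ref{lm:h-geo is geo}. Since \eqref{geodesics} exhausts all semi-infinite geodesics, it equals some $\geo\from{(x,s)}\dir{S}{\tht}{\sig}$ and is therefore a $W^{\tht\sig}$-geodesic; by \eqref{geo:coal2} it coalesces with $\geo\from{(x_0,s_0)}\dir{L}{\tht}{\sig}$. Hence $a^{\tht\sig}_{(x_0,s_0)}(\h)$ is finite, and we may take $(y,t)=(x,s)$ in \eqref{def:a}, which gives
\[\h(x,s)=W^{\tht\sig}(x,s;x_0,s_0)+a^{\tht\sig}_{(x_0,s_0)}(\h).\]
Thus the supremum in \eqref{h-rep} is attained, and equality holds.

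\emph{Normalization.} For \eqref{supa}, the preceding lemma gives $a^{\tht\sig}_{(x_0,s_0)}(\h)\le0$ for all $\tht,\sigg$. Applying the lower-bound argument at $(x,s)=(x_0,s_0)$, and using $W^{\tht\sig}(x_0,s_0;x_0,s_0)=0$ and $\h(x_0,s_0)=0$, gives some $\tht,\sigg$ with $a^{\tht\sig}_{(x_0,s_0)}(\h)=0$. So the supremum equals $0$ and is attained.

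The main obstacle is the identification step in the lower bound: that an arbitrary $\h$-geodesic is one of the Busemann geodesics in \eqref{geodesics}, with a sign $\sigg$ chosen consistently, so that the coalescence properties apply. This rests on the cited classification of semi-infinite geodesics and on the remark preceding \eqref{def:a}. The remaining steps are routine bookkeeping with the cocycle property \eqref{cocycle} and coalescence.
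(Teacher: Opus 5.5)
Your proposal is correct and follows the paper's proof essentially step for step. You get $W^{\tht\sig}(x,s;x_0,s_0)+a^{\tht\sig}_{(x_0,s_0)}(\h)\le\h(x,s)$ from a common point $(z,r)$ of the coalescing geodesics together with \eqref{h-eternal}. You get the reverse inequality by identifying an $\h$-geodesic out of $(x,s)$ with a member of \eqref{geodesics} and taking $(y,t)=(x,s)$ in \eqref{def:a}, and you obtain \eqref{supa} by evaluating at $(x_0,s_0)$.

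The differences are cosmetic. You cite Theorem \ref{thm:h-geo}\eqref{thm:h-geo.c} explicitly for the existence of the $\h$-geodesic, which the paper uses tacitly. You also compute $W^{\tht\sig}(x,s;x_0,s_0)$ directly from \eqref{W-def} rather than through the cocycle property \eqref{cocycle}.
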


\begin{proof}
First consider $\tht\in\R$ and $\sigg\in\{-,+\}$ such that $a^{\tht\sig}_{(x_0,s_0)}(\h)>-\infty$. Then there exists $(y,t)\in\R^2$ and an $\h$-geodesic out of $(y,t)$ that coalesces with $\geo\from{(x_0,s_0)}\dir{L}{\tht}{\sig}$. By \eqref{geo:coal2}, $\geo\from{(x_0,s_0)}\dir{L}{\tht}{\sig}$ coalesces with $\geo\from{(x,s)}\dir{L}{\tht}{\sig}$. Let $(z,r)$ be the coalescence point of all three geodesics.
Using it in the definition \eqref{def:a} we get
\begin{align*}
W^{\tht\sig}(x,s;x_0,s_0)+a^{\tht\sig}_{(x_0,s_0)}(\h)
&=W^{\tht\sig}(x,s;x_0,s_0)+\h(z,r)-W^{\tht\sig}(z,r;x_0,s_0)\\
&=W^{\tht\sig}(x,s;z,r)+\h(z,r)
=\mathcal L(x,s;z,r)+\h(z,r)\le \h(x,s).
\end{align*}
Taking a supremum over $\tht\in\R$ and $\sigg\in\{-,+\}$ proves one part of \eqref{h-rep}:
\begin{align}\label{aux-ge}
\h(x,s)\ge\sup_{\tht\in\R,\siggg\in\{-,+\}}\{W^{\tht\sig}(x,s;x_0,s_0)+a^{\tht\sig}_{(x_0,s_0)}(\h)\}.
\end{align}

For the other direction, consider an $\h$-geodesic $\pi$ out of $(x,s)$. Since \eqref{geodesics} includes all the semi-infinite geodesics, there exist $\tht\in\R$, $\sigg\in\{-,+\}$, and $S\in\{L,M,R\}$ such that $\pi=\geo\from{(x,s)}\dir{S}{\tht}{\sig}$. By the coalescence \eqref{geo:coal2}, $\pi$ coalesces with $\geo\from{(x_0,s_0)}\dir{S}{\tht}{\sig}$. Therefore,
$a^{\tht\sig}_{(x_0,s_0)}(\h)=\h(x,s)-W^{\tht\sig}(x,s;x_0,s_0)$.
Rearranging, we get
\[\h(x,s)=W^{\tht\sig}(x,s;x_0,s_0)+a^{\tht\sig}_{(x_0,s_0)}(\h).\]
This and \eqref{aux-ge} give \eqref{h-rep}. Evaluating \eqref{h-rep} at $(x_0,s_0)$ and using $\h(x_0,s_0)=0$ yields \eqref{supa}.
\end{proof}

An eternal solution $\h$ is said to be \emph{minimal} if for any eternal solution $g \le \h$, there exists $c \in \R$ such that $\h = g + c$. Geometrically, this means that the portion of the (max-plus) cone of eternal solutions lying below $\h$ consists precisely of the (max-plus) line segment joining $-\infty$ and $\h$. 
In the classical $(+,\times)$ setting, minimality and extremality are equivalent. The next lemma shows that, in max-plus algebra, minimality is equivalent to a stronger notion of extremality.

\begin{lem}\label{eternal:min=ext}
    The following holds on a full probability event.
    For all $x_0,s_0\in\R$, 
    an eternal solution $\h$ with $\h(x_0,s_0)=0$ is minimal if and only if every representation $\h=(f+a)\vee(g+b)$, where $f$ and $g$ are eternal solutions with $f(x_0,s_0)=g(x_0,s_0)=0$, and $a,b\in\R$ satisfy $a\vee b=0$, is trivial in the sense that $f=g=\h$. Consequently, every minimal eternal solution is extremal.
\end{lem}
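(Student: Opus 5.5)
The proof is a direct two-sided argument from the definitions, using only the max-plus order structure. Throughout, $a\vee b=0$ with $a,b\in\R$ means both $a\le 0$ and $b\le0$, and at least one of them equals $0$.

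\emph{Minimal $\Rightarrow$ trivial representations.} Suppose $\h$ is minimal and $\h=(f+a)\vee(g+b)$ as in the statement.
\begin{itemize}
\item Since $\h\ge f+a$ and $f+a$ is an eternal solution (Lemma \ref{eternal-cvx} with a single term), minimality gives a constant $c$ with $\h=f+a+c$.
\item Evaluating at $(x_0,s_0)$ and using $\h(x_0,s_0)=f(x_0,s_0)=0$ forces $a+c=0$, so $f=\h$.
\item The same argument applied to $g+b\le\h$ gives $g=\h$.
\end{itemize}
Here it matters that $a,b$ are finite, so that $f+a$ and $g+b$ are genuine real-valued eternal solutions.

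\emph{Trivial representations $\Rightarrow$ minimal.} Suppose every representation is trivial, and let $g\le\h$ be an eternal solution. Set $b=g(x_0,s_0)\le 0$ and $\tilde g=g-b$, so that $\tilde g(x_0,s_0)=0$. Then
\[
\h=(\h+0)\vee(\tilde g+b),
\]
because $\tilde g+b=g\le\h$. Since $0\vee b=0$, this is an admissible representation with $f=\h$ and $a=0$. By hypothesis $\tilde g=\h$, i.e.\ $g=\h+b$, which is minimality with $c=-b$.

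\emph{Consequence.} Extremality as defined earlier only asks that $f=\h$ or $g=\h$. The condition just shown to be equivalent to minimality gives both, so every minimal eternal solution is extremal.

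No step is a real obstacle. The one point to check is that the representation built in the converse is legitimate: it uses the finite coefficients $a=0$ and $b\le 0$, and its two components are eternal solutions normalized to vanish at $(x_0,s_0)$. The "full probability event" plays no role beyond the one on which eternal solutions are considered.
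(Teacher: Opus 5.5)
Your proof is correct and follows essentially the same argument as the paper. The forward direction uses minimality together with the normalization at $(x_0,s_0)$ to pin down the constants. The converse writes $\h=\h\vee\bigl((g-g(x_0,s_0))+g(x_0,s_0)\bigr)$ with coefficients $0$ and $g(x_0,s_0)\le 0$, which is the same decomposition the paper uses.
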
 

\begin{proof}
Suppose first that $\h$ is a minimal and that 
$\h=(g+a)\vee(f+b)$ with $a,b\in\R$, $a\vee b=0$ and the eternal solutions $g$ and $f$ satisfying $g(x_0,s_0)=f(x_0,s_0)=0$.
Then $g+a\le \h$ and $f+b\le \h$ and we must have $\h=g+a+c=f+b+c'$ for some $c,c'\in\R$. 
$f(x_0,s_0)=g(x_0,s_0)=\h(x_0,s_0)=0$ implies  $c=-a$ and $c'=-b$ and consequently $f=g=\h$.

Conversely, suppose $\h$ satisfies the decomposition property. Suppose $g$ is an eternal solution such that $g\le \h$. ($g$ need not satisfy $g(x_0,s_0)=0$.) 
Set $c=-g(x_0,s_0)\ge-\h(x_0,s_0)=0$. Since $g$ is finite-valued, $c\in\R$.
Then $g+c$ is an eternal solution that vanishes at $(x_0,s_0)$.
Moreover, $\h=\h\vee((g+c)-c)$.
Applying the decomposition property with $f=\h$, $a=0$, and $b=-c\le0$, we obtain $g+c=\h$. 
\end{proof}

Before proving the next theorem, we need the following preliminary lemma.

\begin{lem}\label{aux:349}
The following holds on a full probability event.
Let $x_0,s_0,\tht,\tht'\in\R$ and $\sigg,\sigg'\in\{-,+\}$. Suppose that for some $a\in\R$, $W^{\tht\sig}(x,s;x_0,s_0)-W^{\tht'\sig'}(x,s;x_0,s_0)\le a$ for all $x,s\in\R$. Then $\tht'=\tht$ and, if $\tht\in\baddir$, then also $\sigg'=\sigg$. 
\end{lem}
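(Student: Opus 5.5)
First I will identify the direction, using growth rates. Fix $s$ and use \eqref{Wgrowth}: $x^{-1}W^{\tht\sig}(x,s;x_0,s_0)\to2\tht$ and $x^{-1}W^{\tht'\sig'}(x,s;x_0,s_0)\to2\tht'$ as $|x|\to\infty$. The hypothesis says the difference $D(x)=W^{\tht\sig}(x,s;x_0,s_0)-W^{\tht'\sig'}(x,s;x_0,s_0)$ is bounded above by $a$. If $\tht>\tht'$, then $D(x)\sim 2(\tht-\tht')x\to+\infty$ as $x\to+\infty$, a contradiction. If $\tht<\tht'$, then $D(x)\sim 2(\tht-\tht')x\to+\infty$ as $x\to-\infty$, again a contradiction. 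Hence $\tht'=\tht$.

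Next I handle the sign. Suppose $\tht\in\baddir$ and $\sigg\ne\sigg'$. If $\tht\notin\baddir$ the two cocycles coincide, so there is nothing to prove. I would use a geodesic argument rather than spatial asymptotics, since both cocycles have the same linear growth.

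Let $\pi=\geo\from{(x_0,s_0)}\dir{L}{\tht}{\sig}$, which is a $W^{\tht\sig}$-geodesic. By the cocycle property \eqref{cocycle} and the definition \eqref{L=h} of an $h$-geodesic, along $\pi$ we have, for $r>s_0$,
\[W^{\tht\sig}(x_0,s_0;\pi(r))=\mathcal L(x_0,s_0;\pi(r)).\]
On the other hand, \eqref{W-eternal} gives
\[W^{\tht'\sig'}(x_0,s_0;\pi(r))\ge\mathcal L(x_0,s_0;\pi(r)).\]
Normalizing at $(x_0,s_0)$, the hypothesis at the point $\pi(r)$ reads $W^{\tht'\sig'}(x_0,s_0;\pi(r))-W^{\tht\sig}(x_0,s_0;\pi(r))\le a$. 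Both facts together sandwich the increments $W^{\tht'\sig'}(\pi(r);x_0,s_0)-W^{\tht\sig}(\pi(r);x_0,s_0)$ in $[-a,0]$.

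This bound is not yet a contradiction, so I would exploit that $-$ and $+$ geodesics separate. By \eqref{sign}, $\geo\from{(x_0,s_0)}\dir{S}{\tht}{-}$ and $\geo\from{(x_0,s_0)}\dir{S}{\tht}{+}$ eventually separate for good. The cleaner route is the max-plus one, via Theorem \ref{thm:decomp} and the coefficients $a^{\tht\sig}$. Set $g=W^{\tht\sig}(\cdot;x_0,s_0)$ and $f=W^{\tht'\sig'}(\cdot;x_0,s_0)$. Since $g-a\le f$, we have $f=f\vee(g-a)$, with both terms eternal solutions, by Lemma \ref{eternal-cvx}.

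Now every $f$-geodesic is a $W^{\tht\sig'}$-geodesic, and by \eqref{sign} it never coalesces with $W^{\tht\sig}$-geodesics. To see this, run the argument of Theorem \ref{thm:decomp} at a point $(x,s)$ on $\pi$. Take the $f$-geodesic out of $(x,s)$. It is some $\geo\from{(x,s)}\dir{S}{\tht''}{\sig''}$. The growth argument above applied to $f$ forces $\tht''=\tht$, and since $W^{\tht+}\ne W^{\tht-}$ there, $\sigg''=\sigg'$.

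The key step, and the main obstacle, is showing that $g-a\le f$ forces $f$-geodesics to follow $g$-geodesics on a set of positive measure. I would try the following. Along $\pi$, $f-g$ is nonincreasing in time, because
\[f(\pi(r))-f(\pi(t))\ge\mathcal L(\pi(r);\pi(t))=g(\pi(r))-g(\pi(t)).\]
It is also bounded below by $-a$, so it converges. The plan is then to use instability to show that the gap strictly drops by a definite amount infinitely often. Each time a $\sigg'$-geodesic from a point of $\pi$ separates from $\pi$, the inequality is strict. Using stationarity and ergodicity of the Busemann difference along the geodesic, as in the instability-region results of \cite{Ras-Swe-26-b-}, I would try to show these drops sum to $+\infty$, giving $f-g\to-\infty$ along $\pi$ and contradicting the bound $a$. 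Making this drop quantitative on a full-probability event simultaneously for all $\tht\in\baddir$ is the delicate part. Failing that, I would fall back on an ordering argument: by \eqref{W:mono}, $W^{\tht-}-W^{\tht+}$ is monotone in $x$ on a time slice, and I would show it is unbounded via the instability region description in \cite{Ras-Swe-26-b-}.
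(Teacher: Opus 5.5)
Your first step is correct and matches the paper: on one time slice, divide by $x$ and use \eqref{Wgrowth} as $x\to\pm\infty$ to get $\tht'=\tht$.

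The sign step, however, has a genuine gap. You set spatial asymptotics aside because both cocycles have the same linear growth, but that is exactly where the missing input is. By \cite[(5.6)]{Bus-Sep-Sor-24} (recorded in the paper as \eqref{(5.6)}), for $\tht\in\baddir$ the difference $W^{\tht+}(x,s_0;x_0,s_0)-W^{\tht-}(x,s_0;x_0,s_0)$ tends to $+\infty$ as $x\to+\infty$ and to $-\infty$ as $x\to-\infty$. This contradicts the hypothesis on the slice $s=s_0$ at once: if $(\sigg,\sigg')=(+,-)$, let $x\to+\infty$; if $(\sigg,\sigg')=(-,+)$, let $x\to-\infty$.

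Your fall-back points this way but is not enough as stated. Monotonicity of $x\mapsto W^{\tht-}-W^{\tht+}$ (from \eqref{W:mono} and \eqref{cocycle}), together with vanishing at $x_0$, only gives the difference a sign on each half-line. To cover both sign cases you need divergence on both sides: $W^{\tht-}-W^{\tht+}\to+\infty$ as $x\to-\infty$, and $\to-\infty$ as $x\to+\infty$. Saying you ``would show'' unboundedness via \cite{Ras-Swe-26-b-} leaves the whole sign case unproved. Once the two-sided divergence is cited, the monotonicity is not even needed.

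Your primary plan cannot be run as proposed. The sandwich in $[-a,0]$ and the fact that $f-g$ is nonincreasing along the $W^{\tht\sig}$-geodesic $\pi$ are both correct. But they only reduce the problem to showing $f-g\to-\infty$ along $\pi$, which is an unproved divergence statement. You propose getting it from stationarity and ergodicity, but $\baddir$ is a random countable set. For every fixed deterministic $\tht$ one has $\tht\notin\baddir$ almost surely (the paper uses this, via \cite[Theorem 5.5(iii)]{Bus-Sep-Sor-24}, for rational $\tht$). So fixed-direction ergodic statements say nothing about the directions you need. You already note that making the drops quantitative simultaneously for all $\tht\in\baddir$ is unresolved, and that is the whole difficulty. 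The ``max-plus route'' paragraph, about $f$-geodesics following $g$-geodesics on a set of positive measure, does not produce a contradiction either.
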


\begin{proof}
From the assumption, we have that 
\begin{align}\label{aux:356}
W^{\tht\sig}(x,s_0;x_0,s_0)-W^{\tht'\sig'}(x,s_0;x_0,s_0)\le a\quad\text{for all }x\in\R.
\end{align}
Take $x<x_0$ and divide through by $x$. Take $x\to-\infty$ and apply \eqref{Wgrowth} to get that $\tht-\tht'\ge 0$. A symmetric argument, taking $x>x_0$ then $x\to\infty$, gives $\tht-\tht'\le0$. This proves that $\tht=\tht'$.  

By \cite[(5.6)]{Bus-Sep-Sor-24}, on a full probability event, for any $\tht\in\baddir$,
\begin{align}\label{(5.6)}
&\lim_{x\to\sig\infty}\bigl(W^{\tht+}(x,s_0;x_0,s_0)-W^{\tht-}(x,s_0;x_0,s_0)\bigr)=\sigg\infty,\quad\sigg\in\{-,+\}.
\end{align}
This and \eqref{aux:356} imply that we must have $\sigg=\sigg'$.
\end{proof}

The next theorem says that Busemann eternal solutions are extremal.

\begin{thm}\label{thm:min}
The following holds on a full probability event.
For any $x_0,s_0,\tht\in\R$, and $\sigg\in\{-,+\}$, the eternal solution $\h(x,s)=W^{\tht\sig}(x,s;x_0,s_0)$ is minimal and hence extremal.   
\end{thm}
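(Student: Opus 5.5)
To prove minimality of $\h=W^{\tht\sig}(\cdot;x_0,s_0)$, we take an eternal solution $g\le\h$ and show that $g-\h$ is constant. Extremality then follows from Lemma \ref{eternal:min=ext}. The natural tool is the convex decomposition of Theorem \ref{thm:decomp} applied to $g$ (after normalizing $g+c$ to vanish at $(x_0,s_0)$, with $c=-g(x_0,s_0)\ge0$). It gives
\[g(x,s)+c=\sup_{\tht',\siggg'}\{W^{\tht'\sig'}(x,s;x_0,s_0)+a^{\tht'\sig'}_{(x_0,s_0)}(g+c)\},\qquad \sup_{\tht',\siggg'} a^{\tht'\sig'}_{(x_0,s_0)}(g+c)=0.\]

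The first step is to restrict which Busemann functions can carry a finite coefficient. Suppose $a'=a^{\tht'\sig'}_{(x_0,s_0)}(g+c)>-\infty$. Then $W^{\tht'\sig'}(x,s;x_0,s_0)+a'\le g+c\le W^{\tht\sig}(x,s;x_0,s_0)+c$ for all $(x,s)$, so $W^{\tht'\sig'}-W^{\tht\sig}$ is bounded above on $\R^2$. Lemma \ref{aux:349}, with the roles of the two functions swapped, then forces $\tht'=\tht$, and also $\sigg'=\sigg$ when $\tht\in\baddir$. When $\tht\notin\baddir$, \eqref{sign} and \eqref{exceptional} give $W^{\tht-}=W^{\tht+}$, so both signs produce the same function.

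In every case, the supremum therefore reduces to terms equal to $W^{\tht\sig}(x,s;x_0,s_0)+a'$. There are at most two such terms, and their coefficients have supremum $0$. Hence $g+c=W^{\tht\sig}(\cdot;x_0,s_0)=\h$, which is exactly minimality.

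The delicate point is confirming that the decomposition \eqref{h-rep} genuinely applies to the normalized $g+c$. This holds because $g$ is real-valued and an eternal solution, and translation preserves that. The other point to check is the case $\tht\notin\baddir$: one must make sure that identical $W^{\tht\pm}$ on the whole plane (not just on the line $s=s_0$) follows from the equality of the geodesics in \eqref{sign} via the definition \eqref{W-def}. That step is immediate, since the geodesics coincide and hence so do the limits. I expect no serious obstacle; the main work is already in Theorem \ref{thm:decomp} and Lemma \ref{aux:349}.
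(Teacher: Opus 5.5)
Your proposal is correct and follows essentially the same route as the paper's proof. You normalize $g$, apply the decomposition \eqref{h-rep}, and use Lemma \ref{aux:349} (with the roles swapped, exactly as the paper implicitly does) to see that only terms equal to $W^{\tht\sig}(\cdot\,;x_0,s_0)$ can carry finite coefficients, identifying $W^{\tht\pm}$ when $\tht\notin\baddir$; \eqref{supa} then forces $g+c=\h$.
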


\begin{proof}
 Suppose that $g$ is an eternal solution such that $g\le \h$.
Let $a=-g(x_0,s_0)$. Then $f=g+a$ is an eternal solution that is normalized to satisfy $f(x_0,s_0)=0$. 
By \eqref{h-rep}, we have for all $(x,s)\in\R^2$, $\tht'\in\R$, and $\sigg'\in\{-,+\}$,
\[W^{\tht\sig}(x,s;x_0,s_0)=\h(x,s)
\ge g(x,s)
= f(x,s)-a
\ge W^{\tht'\sig'}(x,s;x_0,s_0)+a^{\tht'\sig'}_{(x_0,s_0)}(f)-a.\]
This, together with Lemma \ref{aux:349}, shows us that if $a^{\tht'\sig'}_{(x_0,s_0)}(f)>-\infty$, then necessarily $\tht'=\tht$, and, if $\tht\in\baddir$, also $\sigg'=\sigg$. It follows that 
\begin{align}\label{aux819}
W^{\tht'\sig'}=W^{\tht\sig}\quad\text{and}\quad 
a^{\tht'\sig'}_{(x_0,s_0)}(f)=a^{\tht\sig}_{(x_0,s_0)}(f).
\end{align}
(The same conclusion holds when $\tht\notin\baddir$, since in that case there is no sign distinction.) Now, \eqref{supa} implies $a^{\tht\sig}_{(x_0,s_0)}(f)=0$. This, \eqref{h-rep}, and \eqref{aux819} gives us that, for all $x,s\in\R$,
\begin{align*}
    g(x,s)+a
    &=f(x,s)
    =\sup_{\tht',\sig'}\bigl\{
    W^{\tht'\sig'}(x,s;x_0,s_0)+a^{\tht'\sig'}_{(x_0,s_0)}(f)\bigr\}
    =W^{\tht\sig}(x,s;x_0,s_0)=\h(x,s).
\end{align*}
Thus, $\h=g+a$, which proves that $\h$ is minimal. Then its extremility follows from Lemma \ref{eternal:min=ext}.
\end{proof}

Together, Theorems \ref{thm:decomp} and \ref{thm:min} show that any (normalized) eternal solution can be expressed as a convex mixsture of extremal ones. This establishes Theorem \ref{thm:main1}, except for the countability of the decomposition, which follows from parts \eqref{prop:general-I.a} and \eqref{thm:general-I.d} of Theorem \ref{thm:general-I} below (see Remark \ref{rk:countable}).\smallskip

Recall from the end of Section \ref{sec:main} that the (max-plus) Martin boundary is given by the horofunctions. For each $\tht\in\R$ and $\sigg\in\{-,+\}$, the function $(x,s)\mapsto W^{\tht\sig}(x,s;x_0,s_0)$ is a Busemann function and hence a horofunction, and therefore lies in the Martin boundary; by Theorem \ref{thm:min}, it belongs to the minimal Martin boundary. Conversely, if $\h$ lies in the minimal Martin boundary, then it is an extremal eternal solution, and Theorem \ref{thm:decomp} implies that $\h(\abullet)=W^{\tht\sig}(\,\abullet\,;x_0,s_0)$ for some $\tht\in\R$ and $\sigg\in\{-,+\}$. It follows that the minimal Martin boundary coincides with the set of Busemann functions. We now turn to identifying the full Martin boundary, that is, the set of horofunctions, beginning with the observation that any eternal solution with a spatial growth rate is necessarily a horofunction.

\begin{thm}\label{thm:eternal+growth->horo}
The following holds on a full probability event.
Let $(x_0,s_0)\in\R^2$. Let $\h$ be an eternal solution, normalized to vanish at $(x_0,s_0)$. Suppose that for some $r\in\R$,
\begin{align}\label{aux841}
\lim_{|z|\to\infty} z^{-1} \h(z,r)=2\tht,
\end{align}
for some $\tht\in\R$. Then there exist $a_-,a_+\in[-\infty,0]$ with $a_-\vee a_+=0$ and such that 
\begin{align}\label{h-rep2}
\h(x,s)=
 (W^{\tht-}(x,s;x_0,s_0)+a_-)\vee(W^{\tht+}(x,s;x_0,s_0)+a_+)
\end{align}
for all $x,s\in\R^2$.
\end{thm}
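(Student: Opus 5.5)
The plan is to start from the decomposition \eqref{h-rep} of Theorem \ref{thm:decomp}:
\[
\h(x,s)=\sup_{\tht',\siggg}\{W^{\tht'\sig}(x,s;x_0,s_0)+a^{\tht'\sig}_{(x_0,s_0)}(\h)\}.
\]
I will show that the growth assumption \eqref{aux841} at the single time $r$ forces $a^{\tht'\sig}_{(x_0,s_0)}(\h)=-\infty$ whenever $\tht'\ne\tht$. Once that is done, only the two terms with $\tht'=\tht$ remain, namely $a_\pm=a^{\tht\pm}_{(x_0,s_0)}(\h)$, and \eqref{h-rep2} follows at once. The normalization $a_-\vee a_+=0$ is then \eqref{supa}, and $a_\pm\le0$ comes from the lemma preceding Theorem \ref{thm:decomp}. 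When $\tht\notin\baddir$ the two terms coincide and nothing further is needed.

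To exclude $\tht'\ne\tht$, suppose $a:=a^{\tht'\sig}_{(x_0,s_0)}(\h)>-\infty$ for some $\tht'>\tht$; the case $\tht'<\tht$ is symmetric. By \eqref{h-rep} we have the lower bound $\h(z,r)\ge W^{\tht'\sig}(z,r;x_0,s_0)+a$ for all $z$. By the cocycle property \eqref{cocycle},
\[
W^{\tht'\sig}(z,r;x_0,s_0)=W^{\tht'\sig}(z,r;0,r)+W^{\tht'\sig}(0,r;x_0,s_0),
\]
where the second term is a finite constant. Applying \eqref{Wgrowth} at time $r$ then gives $z^{-1}W^{\tht'\sig}(z,r;x_0,s_0)\to2\tht'$ as $z\to+\infty$. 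Dividing the lower bound by $z>0$ and letting $z\to+\infty$ yields $2\tht\ge2\tht'$, a contradiction. For $\tht'<\tht$, divide by $z<0$ and let $z\to-\infty$: the inequality flips to $2\tht\le2\tht'$, again a contradiction.

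The main obstacle I expect is keeping the hypothesis genuinely one-sided. We only know $\h$ at the single time $r$, and \eqref{h-rep} provides only lower bounds on $\h$ from each Busemann term. Fortunately, that is exactly what the argument uses. The growth rate of $W^{\tht'\sig}(\cdot,r;x_0,s_0)$ at the time $r$ is available from \eqref{Wgrowth}, which holds for all $s$ simultaneously on a full-probability event, so no information about other times is required. I will also note that the argument rules out $\tht'>\tht$ using $z\to+\infty$ and $\tht'<\tht$ using $z\to-\infty$. Each direction therefore needs only the corresponding one-sided limit of \eqref{aux841}.
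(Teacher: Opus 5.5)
Your proposal is correct and matches the paper's proof. Both start from the decomposition \eqref{h-rep}, use the lower bound $\h(z,r)\ge W^{\tht'\sig}(z,r;x_0,s_0)+a^{\tht'\sig}_{(x_0,s_0)}(\h)$ with \eqref{Wgrowth} as $z\to\pm\infty$ to force $a^{\tht'\sig}_{(x_0,s_0)}(\h)=-\infty$ for every $\tht'\ne\tht$, and then read off $a_-\vee a_+=0$ from \eqref{supa}. The only difference is your cocycle reduction to $W^{\tht'\sig}(z,r;0,r)$, which is harmless but unnecessary, since \eqref{Wgrowth} is already stated for arbitrary $(x_0,s_0)$ and $s$.
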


\begin{proof}
From \eqref{h-rep}, we have
\[\h(z,r)=\sup_{\tht',\sig'}\bigl\{W^{\tht'\sig'}(z,r;x_0,s_0)+a^{\tht'\sig'}_{(x_0,s_0)}(\h)\bigr\}.\]
From this, we get
\[\h(z,r)\ge W^{\tht'\sig'}(z,r;x_0,s_0)+a^{\tht'\sig'}_{(x_0,s_0)}(\h).\]
Suppose $a^{\tht'\sig'}_{(x_0,s_0)}(\h)>-\infty$.
Divide by $z$, take $z\to\infty$, and apply \eqref{Wgrowth} to get $\tht\ge\tht'$. Similarly, taking $z\to-\infty$ gives $\tht\le\tht'$. This shows that to have $a^{\tht'\sig'}_{(x_0,s_0)}(\h)>-\infty$, we must have $\tht'=\tht$. Now, the representation \eqref{h-rep} reduces to \eqref{h-rep2}.
\end{proof}

Next, we show the converse of Theorem \ref{thm:eternal+growth->horo}: any horofunction is an eternal solution with a spatial growth rate. 
To this end, we first show that eternal solutions of the form \eqref{h-rep2} must have a spatial growth rate of $2\tht$, at all times. Consequently, once \eqref{aux841} holds for some $r$, it, in fact, holds for all $r\in\R$.

\begin{lem}\label{lm:h-growth-rate}
    The following holds on a full probability event.
    Let $\tht\in\R$ and $a_-,a_+\in[-\infty,0]$ with $a_-\vee a_+=0$. Set
    \begin{align}\label{h-convcomb}
    \h(x,s)=(W^{\tht-}(x,s;x_0,s_0)+a_-)\vee(W^{\tht+}(x,s;x_0,s_0)+a_+)\quad\text{for all $x,s\in\R$.}
    \end{align}
    Then for all $x<y$ and $s$ in $\R$,
    \begin{align}\label{h-order}
    W^{\tht-}(y,s;x,s)\le \h(y,s)-\h(x,s)\le W^{\tht+}(y,s;x,s).
    \end{align}
    Furthermore, for all $s\in\R$
    \[\lim_{y-x\to\infty}(y-x)^{-1}(\h(y,s)-\h(x,s))=2\tht.\]
\end{lem}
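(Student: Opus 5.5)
The plan is to reduce everything to the ordering \eqref{W:mono} at a single parameter $\tht$, and to elementary max-plus manipulations. Abbreviate $W^\pm(x,s)=W^{\tht\pm}(x,s;x_0,s_0)$. By the cocycle property \eqref{cocycle}, $W^{\tht\pm}(y,s;x,s)=W^\pm(y,s)-W^\pm(x,s)$. The case $\tht=\tht'$ of \eqref{W:mono}, read with the variables in the order used there, gives for $x<y$
\[
W^{\tht+}(x,s;y,s)\le W^{\tht-}(x,s;y,s).
\]
Antisymmetry of the cocycle then yields
\[
W^{-}(y,s)-W^{-}(x,s)\le W^{+}(y,s)-W^{+}(x,s).
\]
That is, the increment of $W^-$ over $[x,y]$ is dominated by that of $W^+$, which is the natural ordering behind \eqref{h-order}.

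To prove \eqref{h-order}, first dispose of the degenerate cases. If $a_-=-\infty$ then $\h=W^++a_+$ with $a_+=0$, and symmetrically if $a_+=-\infty$. In these cases $\h(y,s)-\h(x,s)$ equals one of the two increments, and the previous display gives both bounds. In the general case, write $A=W^-(x,s)+a_-$, $B=W^+(x,s)+a_+$, and set $\delta_\pm=W^\pm(y,s)-W^\pm(x,s)$, so that $\delta_-\le\delta_+$. Then
\[
\h(y,s)=(A+\delta_-)\vee(B+\delta_+).
\]
This lies between $(A\vee B)+\delta_-$ and $(A\vee B)+\delta_+$, because each term is bounded above by $(A\vee B)+\delta_+$ and the maximum is bounded below by $(A+\delta_-)\vee(B+\delta_-)$. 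Subtracting $\h(x,s)=A\vee B$ gives \eqref{h-order}. This step is purely algebraic once the ordering is in hand.

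For the growth statement, divide \eqref{h-order} by $y-x$. It then suffices to show that $(y-x)^{-1}W^{\tht\pm}(y,s;x,s)\to2\tht$ as $y-x\to\infty$. This limit should be taken in the sense consistent with \eqref{Wgrowth}, and I read it as holding when at least one endpoint is fixed or both tend to infinity in opposite directions; I will interpret the statement as covering these regimes.

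Using the cocycle, $W^{\tht\pm}(y,s;x,s)=W^\pm(y,s)-W^\pm(x,s)$. By \eqref{Wgrowth}, $W^\pm(z,s)=2\tht z+o(|z|)$ as $|z|\to\infty$, and $W^\pm(\cdot,s)$ is bounded on compacts by continuity (Theorem~\ref{th:h-cont}). Hence
\[
W^\pm(y,s)-W^\pm(x,s)=2\tht(y-x)+o(|x|+|y|).
\]
The main obstacle is to make sure $o(|x|+|y|)=o(y-x)$. This holds when $x\le x_0\le y$, and more generally when $|x|+|y|=O(y-x)$. It fails if, say, both $x$ and $y$ go to $+\infty$ with $y-x$ growing much more slowly. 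To handle that regime I would first try to interpret the limit as in the paper's later use, namely with one endpoint fixed. Failing that, I would invoke the stationarity and independent-increment structure of Busemann processes from \cite{Bus-Sep-Sor-24}, where $W^{\tht\pm}(\cdot,s;0,s)$ is a Brownian motion with drift $2\tht$ and the increment limit along arbitrary windows holds almost surely by a law of large numbers. Either way, the conclusion then follows from the squeeze given by \eqref{h-order}.
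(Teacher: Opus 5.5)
Your argument is the paper's: the single-$\tht$ comparison in \eqref{W:mono} plus the cocycle gives $W^{\tht-}(y,s;x,s)\le W^{\tht+}(y,s;x,s)$, and the max-plus squeeze then gives \eqref{h-order}. The growth claim is read off from \eqref{h-order} and \eqref{Wgrowth} in the regime where $o(|x|+|y|)=o(y-x)$, for instance with one endpoint fixed, which is exactly how the lemma feeds into Theorem~\ref{thm:main2}.

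Drop your fallback, though. There is no law of large numbers along arbitrary windows: for fixed $\ell$, the increments $W^{\tht'}(x+\ell,s;x,s)$, $x\in\R$, of a Brownian motion with drift are unbounded above. By \eqref{h-order} and \eqref{W:mono} with $\tht'<\tht$, the increments of $\h$ are then unbounded above too. So the restricted reading of $y-x\to\infty$ is necessary, not just convenient.
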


\begin{rmk}
By \eqref{(5.6)}, for any $\tht\in\baddir$ and any $a_-,a_+\in\R$ with $a_-\vee a_+=0$, if $\h$ is given by \eqref{h-convcomb}, then
$\h(x,s_0)=W^{\tht+}(x,s_0;x_0,s_0)+a_+$ for all sufficiently large positive $x$, and $\h(x,s_0)=W^{\tht-}(x,s_0;x_0,s_0)+a_-$ for all sufficiently large negative $x$. Consequently, distinct pairs $(a_-,a_+)$ determine distinct eternal solutions \eqref{h-convcomb}. In particular, there exists a continuum of eternal solutions with growth rate $2\tht$.
\end{rmk}

\begin{proof}[Proof of Lemma \ref{lm:h-growth-rate}]
Use \eqref{cocycle} and \eqref{W:mono} to write
\begin{align*}
 &\h(y,s)-\h(x,s)\\
 &\quad=(W^{\tht-}(y,s;x_0,s_0)+a_-)\vee(W^{\tht+}(y,s;x_0,s_0)+a_+)\\
 &\qquad\qquad\qquad-(W^{\tht-}(x,s;x_0,s_0)+a_-)\vee(W^{\tht+}(x,s;x_0,s_0)+a_+)\\
 &\quad=(W^{\tht-}(y,s;x,s)+W^{\tht-}(x,s;x_0,s_0)+a_-)\vee(W^{\tht+}(y,s;x,s)+W^{\tht+}(x,s;x_0,s_0)+a_+)\\
 &\qquad\qquad\qquad-(W^{\tht-}(x,s;x_0,s_0)+a_-)\vee(W^{\tht+}(x,s;x_0,s_0)+a_+)\\
 &\quad\le(W^{\tht+}(y,s;x,s)+W^{\tht-}(x,s;x_0,s_0)+a_-)\vee(W^{\tht+}(y,s;x,s)+W^{\tht+}(x,s;x_0,s_0)+a_+)\\
 &\qquad\qquad\qquad-(W^{\tht-}(x,s;x_0,s_0)+a_-)\vee(W^{\tht+}(x,s;x_0,s_0)+a_+)\\
&\quad=W^{\tht+}(y,s;x,s).
\end{align*}
The other inequality comes similarly.
The limit claim comes from \eqref{h-order} and  \eqref{Wgrowth}.
\end{proof}

 Next, we show that every horofunction is a max-plus convex combination of the form \eqref{h-rep2}, for some $\tht\in\R$. In particular, by Lemma \ref{eternal-cvx}, this shows that horofunctions are eternal solutions.

\begin{thm}\label{Buslim}
The following holds on a full probability event.
Let $x_0,s_0\in\R$. Let $(r_n,z_n)\in\R^2$ be a sequence such that $r_n\to\infty$ and $\mathcal L(\,\abullet\,;z_n,r_n)-\mathcal L(x_0,s_0;z_n,r_n)$ converges uniformly on compacts to a function $\h:\R^2\to\R$. Then the following hold.
\begin{enumerate} [label={\rm(\alph*)}, ref={\rm\alph*}]   \itemsep=3pt 
\item\label{Buslim.a} There exists a $\tht\in\R$ such that $z_n/r_n\to\tht$ as $n\to\infty$. 
\item\label{Buslim.b} $\h$ is an eternal solution, normalized to satisfy $\h(x_0,s_0)=0$.
\item\label{Buslim.c} There exist $a_-,a_+\in\R\cup\{-\infty\}$ such that $a_-\vee a_+=0$ and 
\[\h(x,s)=(W^{\tht-}(x,s;x_0,s_0)+a_-)\vee(W^{\tht+}(x,s;x_0,s_0)+a_+)\quad\text{for all $x,s\in\R$.}\]
\end{enumerate}
\end{thm}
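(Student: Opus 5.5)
Write $h_n(x,s)=\mathcal L(x,s;z_n,r_n)-\mathcal L(x_0,s_0;z_n,r_n)$, defined for $s<r_n$. The plan is to prove (b) first, then (a), and then obtain (c) from Theorem \ref{thm:eternal+growth->horo} once we know $\h$ has spatial growth rate $2\tht$ at some time.

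\emph{Part (b).} By \eqref{composition}, for $s<t<r_n$ we have $h_n(x,s)=\sup_y\{\mathcal L(x,s;y,t)+h_n(y,t)\}$. Plugging in any fixed $y$ and letting $n\to\infty$ gives $\h(x,s)\ge\mathcal L(x,s;y,t)+\h(y,t)$. For the reverse inequality we must keep the maximizers $y_n$ in a compact set. The natural choice is $y_n=$ the location at time $t$ of a geodesic from $(x,s)$ to $(z_n,r_n)$. To bound $y_n$ I would first prove (a), or at least show that $z_n/r_n$ is bounded. Given that, the point-to-point geodesics from $(x,s)$ to $(z_n,r_n)$ have bounded directions. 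By the compactness of geodesics (e.g.\ \cite{Bat-Gan-Ham-22}, or the directedness results of \cite{Bus-Sep-Sor-24}), their positions at time $t$ then stay bounded. Passing to a subsequence $y_n\to y^*$ and using local uniform convergence together with continuity of $\mathcal L$ gives $\h(x,s)=\mathcal L(x,s;y^*,t)+\h(y^*,t)$. Finiteness of $\h$ is automatic, and $\h(x_0,s_0)=0$ is trivial.

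\emph{Part (a).} Fix $s=s_0$. By \eqref{L-bound}, $h_n(x,s_0)=-\frac{(z_n-x)^2-(z_n-x_0)^2}{r_n-s_0}+o(1)$ uniformly for $x$ in compacts, with an error of order $r_n^{1/3}\log$ over $r_n$. The main term equals $(x-x_0)\frac{2z_n-x-x_0}{r_n-s_0}$. If $|z_n|/r_n\to\infty$ along a subsequence, then $h_n(x,s_0)$ blows up for $x\neq x_0$, which contradicts convergence to a finite limit. Hence $z_n/r_n$ is bounded. If it had two subsequential limits $\tht_1\neq\tht_2$, the leading term would give $\h(x,s_0)-\h(x_0,s_0)\approx 2\tht_i(x-x_0)$. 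This is only an approximation, however, since the $\mathcal L$ fluctuations are not negligible at fixed $x$. So the direct comparison does not work, and I would instead compare with Busemann functions. Along a subsequence with $z_n/r_n\to\tht_1$, planarity ordering and \eqref{geo:mono} sandwich the geodesic to $(z_n,r_n)$ between $\geo^{\tht_1-\e}$ and $\geo^{\tht_1+\e}$ geodesics for large $n$. This yields
\[W^{(\tht_1-\e)}(y,s;x,s)\le \h(y,s)-\h(x,s)\le W^{(\tht_1+\e)}(y,s;x,s),\qquad x<y.\]
I expect this sandwich, via the standard crossing argument with coalescence \eqref{geo:coal2}, to be the main obstacle. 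Applying it along both subsequences and sending $y-x\to\infty$ with \eqref{Wgrowth} forces the growth rate of $\h(\cdot,s)$ to lie within $2\e$ of both $2\tht_1$ and $2\tht_2$. Hence $\tht_1=\tht_2$, which proves (a).

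\emph{Part (c).} Letting $\e\downarrow0$ in the sandwich and using \eqref{W:mono} together with left/right continuity of $W$ in $\tht$ gives
\[W^{\tht-}(y,s;x,s)\le\h(y,s)-\h(x,s)\le W^{\tht+}(y,s;x,s).\]
With \eqref{Wgrowth}, this shows $z^{-1}\h(z,s)\to2\tht$. Since (b) makes $\h$ an eternal solution, Theorem \ref{thm:eternal+growth->horo} applies and gives the representation with $a_\pm\in[-\infty,0]$ and $a_-\vee a_+=0$.
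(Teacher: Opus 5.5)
Your architecture is close to the paper's, but the step meant to show that $z_n/r_n$ is bounded does not work. Both the confinement of the maximizers $y_n$ in (b) and the extraction of finite subsequential limits in (a) rest on it.

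The bound \eqref{L-bound}, applied at $(x,s_0;z_n,r_n)$, controls $\mathcal L(x,s_0;z_n,r_n)+\frac{(z_n-x)^2}{r_n-s_0}$ only up to an error of order $r_n^{1/3}$ times logarithmic factors. This error is not divided by $r_n$, and \eqref{L-bound} gives no cancellation between the two passage times in $h_n(x,s_0)$. So the error in your expansion of $h_n(x,s_0)$ is not $o(1)$; it can grow like $r_n^{1/3}$ up to logarithms. Meanwhile the main term $(x-x_0)\frac{2z_n-x-x_0}{r_n-s_0}$ has size about $2|x-x_0|\,|z_n|/r_n$, with $|x-x_0|$ confined to a compact set. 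The argument therefore only excludes $|z_n|/r_n$ growing faster than roughly $r_n^{1/3}$; a sequence such as $z_n=r_n\log r_n$ is not ruled out. An $o(1)$ error would also force $\h(\cdot,s_0)$ to be exactly linear, which contradicts your own later remark that the fluctuations of $\mathcal L$ are not negligible at fixed $x$.

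The fix is to run your Busemann comparison one-sidedly, with directions tending to infinity, as the paper does.
\begin{itemize}
\item Suppose $z_{n_k}/r_{n_k}\to+\infty$ and fix $\tht$. For large $k$, the point $(z_{n_k},r_{n_k})$ lies to the right of the $\tht$-directed Busemann geodesic out of $(x+1,s)$. A geodesic from $(x,s)$ to $(z_{n_k},r_{n_k})$ must therefore cross it at some point $q$.
\item Superadditivity at $q$ and $\mathcal L(x,s;q)\le W^{\tht+}(x,s;q)$ then give $\h(x+1,s)-\h(x,s)\ge W^{\tht+}(x+1,s;x,s)$. The paper obtains this from \cite[Lemma B.4(i) and Theorem 5.1(vi)]{Bus-Sep-Sor-24}.
\item The right-hand side is nondecreasing in $\tht$ by \eqref{W:mono}. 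It tends to $+\infty$ almost surely as $\tht\to\infty$, because its law is Gaussian with mean $2\tht$ and a variance independent of $\tht$. This contradicts the finiteness of $\h$, and the case $-\infty$ is symmetric.
\end{itemize}

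With boundedness established this way, the rest of your plan goes through:
\begin{itemize}
\item the two-sided sandwich together with \eqref{Wgrowth} pins down a unique $\tht$;
\item the compactness step in (b) corresponds to the paper's use of \cite[Theorem 6.5(i)]{Bus-Sep-Sor-24};
\item deriving (c) from the sandwich plus Theorem \ref{thm:eternal+growth->horo} is equivalent to the paper's inline argument.
\end{itemize}
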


\begin{proof}
Part \eqref{Buslim.a}. Suppose first that there exists a subsequence $n_k$ such that $z_{n_k}/r_{n_k}\to\infty$. Take any rational $x$ and $s$ and a rational $\tht>0$. 
By \cite[Theorem 5.5(iii)]{Bus-Sep-Sor-24}, we have $\tht \notin \baddir$. Consequently, there is no sign distinction, and we simply write $W^\tht$.
For $k$ sufficiently large, $\tht r_{n_k}<z_{n_k}$ and, by \cite[Lemma B.4(i)]{Bus-Sep-Sor-24}, 
\[\mathcal L(x+1,s;\tht r_{n_k},r_{n_k})-\mathcal L(x,s;\tht r_{n_k},r_{n_k})\le\mathcal L(x+1,s;z_{n_k},r_{n_k})-\mathcal L(x,s;z_{n_k},r_{n_k}).\]
Taking $k\to\infty$ and applying 
\cite[Theorem 5.1(vi)]{Bus-Sep-Sor-24} gives 
\[W^{\tht}(x+1,s;x,s)\le \h(x+1,s)-\h(x,s).\]
By \eqref{W:mono}, the left-hand side is nondecreasing in $\tht$. Stationarity and \eqref{Wgrowth} (or see \cite[Theorem 5.3(iii)]{Bus-Sep-Sor-24}) imply that $\mathbb E[W^{\tht}(x+1,s;x,s)]=2\tht$. By monotone convergence, $W^{\tht}(x+1,s,x,s)\nearrow\infty$ as $\tht\nearrow\infty$. This implies $\h(x+1,s)-\h(x,s)=\infty$, which contradicts the finiteness of $\h$.
Thus, it must be that $z_n/r_n$ is bounded above. A similar reasoning proves that it is also bounded below.  Compactness then implies the existence of a subsequence $n_k$ and a $\tht\in\R$ such that $z_{n_k}/r_{n_k}\to\tht$. Then \cite[Theorem 5.1(vii)]{Bus-Sep-Sor-24} implies that for any $x<y$ and $s$ in $\R$, 
\begin{align}\label{W<h<W}
W^{\tht-}(y,s;x,s)\leq \h(y,s)-\h(x,s)\leq W^{\tht+}(y,s;x,s).
\end{align}

Suppose there exists another $\tht'\in\R$ and a different subsequence $n'_k$ such that $z_{n'_k}/r_{n'_k}\to\tht'$. 
Assume $\tht>\tht'$, the case $\tht<\tht'$ being treated similarly.
The above argument, with $s=s_0$, $x=x_0$, and $\tht'$ in place of $\tht$ gives
\begin{align}\label{W<h<W'}
W^{\tht'-}(y,s_0;x_0,s_0)\leq \h(y,s_0)\leq W^{\tht'+}(y,s_0;x_0,s_0)
\end{align}
for all $y>x_0$. 
 Then the monotonicity \eqref{W:mono} implies $W^{\tht'+}(y,s_0;x_0,s_0)\le W^{\tht-}(y,s_0;x_0,s_0)$.
This, \eqref{W<h<W'}, and \eqref{W<h<W} give $W^{\tht'+}(y,s_0;x_0,s_0)=W^{\tht-}(y,s_0;x_0,s_0)$. Dividing by $y$, taking $y\to\infty$, and using \eqref{Wgrowth}, we have $\tht'=\tht$, contradicting the assumption that $\tht'\ne\tht$. Thus, it must be that $z_n/r_n$ converges to a limit $\tht\in\R$.\smallskip

Part \eqref{Buslim.b}. Take $s<t$ and $x$ in $\R$. By the superadditivity \eqref{superadditivity}, we have 
\[\mathcal L(x,s;y,t)+\mathcal L(y,t;z_n,r_n)\le
\mathcal L(x,s;z_n,r_n),\]
for all $y\in\R$ and all $n$ sufficiently large (so that $r_n>t$). Subtract $\mathcal L(x_0,s_0;z_n,r_n)$ from both sides and take $n\to\infty$ to get
\[\mathcal L(x,s;y,t)+\h(y,t)\le \h(x,s).\]
Take a supremum over $y$ to get
\begin{align}\label{aux435}
\sup_y\bigl\{\mathcal L(x,s;y,t)+\h(y,t)\bigr\}\le \h(x,s).
\end{align}
To conclude that $\h$ is an eternal solution, we prove the reverse inequality.

For each $n$ (with $r_n>t$) let $\gamma_n:[s,r_n]\to\R$ be the leftmost geodesic from $(x,s)$ to $(z_n,r_n)$. Let $y_n=\gamma_n(t)$. Thus,
\begin{align}\label{aux462}
\mathcal L(x,s;y_n,t)+\mathcal L(y_n,t;z_n,r_n)=\mathcal L(x,s;z_n,r_n).
\end{align}
Let $a\le b$ be such that $\geo\from{(x,s)}\dir{L}{\tht}{-}(t)=(a,t)$ and $\geo\from{(x,s)}\dir{R}{\tht}{+}(t)=(b,t)$.
Then \cite[Theorem 6.5(i)]{Bus-Sep-Sor-24} says that 
\[a\le\varliminf_{n\to\infty}y_n\le\varlimsup_{n\to\infty}y_n\le b.\]
By compactness, there exists a subsequence $n_k$ and a $y\in[a,b]$ such that $y_{n_k}\to y$.
Using \eqref{aux462},
\begin{align*}
&\bigl|\mathcal L(x,s;y,t)+\mathcal L(y,t;z_{n_k},r_{n_k})-\mathcal L(x,s;z_{n_k},r_{n_k})\bigr|\\
&\qquad\le\bigl|\mathcal L(x,s;y,t)-\mathcal L(x,s;y_{n_k},t)\bigr|
+\bigl|\mathcal L(y,t;z_{n_k},r_{n_k})-\mathcal L(y_{n_k},t;z_{n_k},r_{n_k})\bigr|\\
&\qquad=\bigl|\mathcal L(x,s;y,t)-\mathcal L(x,s;y_{n_k},t)\bigr|\\
&\qquad\qquad+\bigl|\bigl(\mathcal L(y,t;z_{n_k},r_{n_k})-\mathcal L(x_0,s_0;z_{n_k},r_{n_k})\bigr)-\bigl(\mathcal L(y_{n_k},t;z_{n_k},r_{n_k})-\mathcal L(x_0,s_0;z_{n_k},r_{n_k})\bigr)\bigr|.
\end{align*}
By the continuity of $\mathcal L$ and the uniform convergence on compacts of $\mathcal L(\,\abullet\,;z_n,r_n)-\mathcal L(x_0,s_0;z_n,r_n)$ to $\h(\abullet)$, the right-hand side converges to $0$ as $k\to\infty$. Then adding and subtracting $\mathcal L(x_0,s_0;z_{n_k},r_{n_k})$ inside the absolute value on the left-hand side, then taking $k\to\infty$ gives 
\[\mathcal L(x,s;y,t)+\h(y,t)=\h(x,s).\]
This and \eqref{aux435} imply \eqref{h-eternal} and hence $\h$ has been shown to be an eternal solution. It is clear from the definition of $\h$ that $\h(x_0,s_0)=0$.
\smallskip

Part \eqref{Buslim.c}. From \eqref{h-rep}, we have
\[\h(x,s_0)=\sup_{\tht',\sig'}\bigl\{W^{\tht'\sig'}(x,s_0;x_0,s_0)+a^{\tht'\sig'}_{(x_0,s_0)}(\h)\bigr\}.\]
This and \eqref{W<h<W} (with $y=x_0$ and $s=s_0$) give
\begin{align}\label{aux988}
W^{\tht'\sig'}(x,s_0;x_0,s_0)+a^{\tht'\sig'}_{(x_0,s_0)}(\h)\le W^{\tht-}(x,s_0;x_0,s_0)\quad\text{for all $x<x_0$}
\end{align}
and (applying \eqref{W<h<W} with $s=s_0$, $x=x_0$, and using $x$ in place of $y$)
\begin{align}\label{aux992}
W^{\tht'\sig'}(x,s_0;x_0,s_0)+a^{\tht'\sig'}_{(x_0,s_0)}(\h)\le W^{\tht+}(x,s_0;x_0,s_0)\quad\text{for all $x>x_0$}.
\end{align}

Suppose $a^{\tht'\sig'}_{(x_0,s_0)}(\h)>-\infty$. Divide both sides of \eqref{aux988} by $x$, take $x\to-\infty$, and apply \eqref{Wgrowth} to get $\tht'\ge\tht$. Dividing \eqref{aux992} by $x$ and taking $x\to\infty$ gives $\tht'\le\tht$. Thus, the only $\tht'$ for which $a^{\tht'\sig'}_{(x_0,s_0)}(\h)>-\infty$ is $\tht'=\tht$. Now, \eqref{h-rep} reduces to the claim in part \eqref{Buslim.c}.
\end{proof}

Thus far, Theorem \ref{Buslim} showed that horofunctions are eternal solutions that are max-plus convex combinations of the Busemann functions $W^{\tht\pm}$ for some $\tht\in\R$, while Theorem \ref{thm:eternal+growth->horo} and Lemma \ref{lm:h-growth-rate}, identify these with the eternal solutions having spatial growth rate $2\tht$. Theorem \ref{thm:convcomb} below completes the equivalence by showing that such convex combinations are horofunctions. To prove this, we need a more detailed description of combinations of the functions $W^{\tht\pm}$.\smallskip

For $\tht\in\baddir$, $(x_0,s_0)\in\R^2$, and $a\in\R$, let  
\begin{align}\label{I}
\begin{split}
  I_{(x_0,s_0),a}^{\tht+}(s)&=\inf\{x\in\R:W^{\tht-}(x,s;x_0,s_0)-W^{\tht+}(x,s;x_0,s_0)<a\}\quad\text{and}\\ 
  I_{(x_0,s_0),a}^{\tht-}(s)&=\sup\{x\in\R:W^{\tht-}(x,s;x_0,s_0)-W^{\tht+}(x,s;x_0,s_0)>a\},\quad s\in\R.
\end{split}
\end{align}

\begin{prop}\label{I-prop}
The following holds on a full probability event.
Let $\tht\in\baddir$ and $(x_0,s_0)\in\R^2$.  Let $a_-,a_+\in(-\infty,0]$ be such that $a_-\vee a_+=0$. 
Let
\[\h(x,s)=
 (W^{\tht-}(x,s;x_0,s_0)+a_-)\vee(W^{\tht+}(x,s;x_0,s_0)+a_+),\quad x,s\in\R.\]
Let $a=a_+-a_-$ and take $s\in\R$.
\begin{enumerate} [label={\rm(\alph*)}, ref={\rm\alph*}] \itemsep=1pt 
\item\label{I-prop.a} $-\infty<I_{(x_0,s_0),a}^{\tht-}(s)\le I_{(x_0,s_0),a}^{\tht+}(s)<\infty$.
\item\label{I-prop.b} $W^{\tht-}(x,s;x_0,s_0)+a_-<W^{\tht+}(x,s;x_0,s_0)+a_+=\h(x,s)$ for all $x>I_{(x_0,s_0),a}^{\tht+}(s)$.
\item\label{I-prop.c} $\h(x,s)=W^{\tht-}(x,s;x_0,s_0)+a_->W^{\tht+}(x,s;x_0,s_0)+a_+$ for all $x<I_{(x_0,s_0),a}^{\tht-}(s)$.
\item\label{I-prop.d} $W^{\tht-}(x,s;x_0,s_0)+a_-=W^{\tht+}(x,s;x_0,s_0)+a_+=\h(x,s)$ for $x\in[I_{(x_0,s_0),a}^{\tht-}(s),I_{(x_0,s_0),a}^{\tht+}(s)]$.
\end{enumerate}
\end{prop}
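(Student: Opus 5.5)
The plan is to study the single function $D_s(x)=W^{\tht-}(x,s;x_0,s_0)-W^{\tht+}(x,s;x_0,s_0)$ of $x\in\R$. Once its behavior in $x$ is understood, all four parts reduce to elementary statements about its level sets. The key observation is that, by the cocycle property \eqref{cocycle} and the monotonicity \eqref{W:mono}, for $x<y$
\[
D_s(y)-D_s(x)=W^{\tht-}(y,s;x,s)-W^{\tht+}(y,s;x,s)\le 0 .
\]
So $D_s$ is nonincreasing in $x$. It is also continuous, because Busemann functions are eternal solutions and hence continuous by Theorem \ref{th:h-cont}.

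Next I pin down the limits of $D_s$ at $\pm\infty$. The identity \eqref{(5.6)} is stated at time $s_0$, but the same fact from \cite[(5.6)]{Bus-Sep-Sor-24} holds at every time level. If one prefers to use only time $s_0$, one can transport it: write $D_s(x)=D_s(x_0')+[W^{\tht-}-W^{\tht+}](x,s;x_0',s)$ via the cocycle and apply the fixed-time statement with base point $(x_0',s)$. This gives $D_s(x)\to+\infty$ as $x\to-\infty$ and $D_s(x)\to-\infty$ as $x\to+\infty$.

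With these facts, part \eqref{I-prop.a} follows. The set $\{x:D_s(x)<a\}$ is a nonempty interval unbounded to the right, so its infimum $I^{\tht+}$ is finite. The set $\{x:D_s(x)>a\}$ is a nonempty interval unbounded to the left, so its supremum $I^{\tht-}$ is finite. Monotonicity forces every point of the second set to lie below every point of the first, giving $I^{\tht-}\le I^{\tht+}$.

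For parts \eqref{I-prop.b}--\eqref{I-prop.d}, note that $W^{\tht-}+a_-<W^{\tht+}+a_+$ is exactly the condition $D_s(x)<a$, with $a=a_+-a_-$, and similarly for the reverse inequality and for equality. For $x>I^{\tht+}$ there is a point $x'<x$ with $D_s(x')<a$, so $D_s(x)<a$ by monotonicity, which gives \eqref{I-prop.b}; the maximum defining $\h$ then equals $W^{\tht+}+a_+$. Part \eqref{I-prop.c} follows symmetrically. For \eqref{I-prop.d}, let $x\in[I^{\tht-},I^{\tht+}]$. If $x$ lies strictly inside the interval, then $D_s(x)\ge a$ because $x<I^{\tht+}$, and $D_s(x)\le a$ because $x>I^{\tht-}$. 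At the endpoints, the same conclusion comes from continuity of $D_s$, since $D_s<a$ just to the right of $I^{\tht+}$ and $D_s\ge a$ just to its left. Hence $D_s(x)=a$ and the two terms coincide.

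The main obstacle is justifying the limits of $D_s$ at an arbitrary time $s$, namely extending \eqref{(5.6)} beyond $s_0$ on the full probability event. Everything else is monotone, continuous bookkeeping.
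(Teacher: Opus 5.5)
Your proposal is correct and follows the paper's proof essentially step for step: the difference $x\mapsto W^{\tht-}(x,s;x_0,s_0)-W^{\tht+}(x,s;x_0,s_0)$ is nonincreasing by the cocycle property and \eqref{W:mono}, the interfaces are finite by \eqref{(5.6)}, parts (b)--(c) follow from monotonicity, and the endpoint equalities in (d) follow from continuity. The time-level point you flag as the main obstacle is harmless: the paper invokes \eqref{(5.6)} directly at time $s$, since it holds for all base points on one event, and your cocycle transport covers it in any case.
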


\begin{proof}
\eqref{(5.6)} and the continuity of $W^{\tht-}$ and $W^{\tht+}$ imply  $I_{(x_0,s_0),a}^{\tht-}(s)>-\infty$ and $I_{(x_0,s_0),a}^{\tht+}(s)<\infty$.
By the cocycle property \eqref{cocycle} and the monotonicity \eqref{W:mono}, if $z>x$, then
\begin{align*}
&W^{\tht-}(z,s;x_0,s_0)-W^{\tht+}(z,s;x_0,s_0)\\
&\qquad=\bigl(W^{\tht-}(x,s;x_0,s_0)-W^{\tht+}(x,s;x_0,s_0)\bigr)
-\bigl(W^{\tht-}(x,s;z,s)-W^{\tht+}(x,s;z,s)\bigr)\\
&\qquad\le
W^{\tht-}(x,s;x_0,s_0)-W^{\tht+}(x,s;x_0,s_0).
\end{align*}
Hence, the function $x \mapsto W^{\tht-}(x,s;x_0,s_0)-W^{\tht+}(x,s;x_0,s_0)$ is nonincreasing. This implies that $I_{(x_0,s_0),a}^{\tht-}(s)\le I_{(x_0,s_0),a}^{\tht+}(s)$, establishing part \eqref{I-prop.a}. The  nonincreasing behavior also yields parts \eqref{I-prop.b} and \eqref{I-prop.c}. Moreover, it follows that $W^{\tht-}(x,s;x_0,s_0)+a_-=W^{\tht+}(x,s;x_0,s_0)+a_+$ for all $x$ strictly between $I_{(x_0,s_0)}^{\tht-}(s)$ and $I_{(x_0,s_0)}^{\tht+}(s)$.

By the continuity of $W^{\tht-}$ and $W^{\tht+}$, we get that for both $\sigg\in\{-,+\}$,
\begin{align}\label{aux912}
W^{\tht-}(I_{(x_0,s_0),a}^{\tht\sig}(s),s;x_0,s_0)+a_-=W^{\tht+}(I_{(x_0,s_0),a}^{\tht\sig}(s),s;x_0,s_0)+a_+.
\end{align}
We have thus shown part \eqref{I-prop.d} to hold.
\end{proof}

Now we give the converse of Theorem \ref{Buslim}: any convex combination of $W^{\tht\pm}$ is a horofunction. 

\begin{thm}\label{thm:convcomb}
The following holds on a full probability event.
Let  $x_0,s_0,\tht\in\R$. 
Let $a_-,a_+\in\R\cup\{-\infty\}$ be such that $a_-\vee a_+=0$. Let $\h(x,s)=(W^{\tht-}(x,s;x_0,s_0)+a_-)\vee(W^{\tht+}(x,s;x_0,s_0)+a_+)$, $x,s\in\R$. Then there exists a sequence $(z_n,r_n)\in\R^2$ with $r_n\to\infty$, $z_n/r_n\to\tht$, such that, as $n\to\infty$, 
$\mathcal L(\,\abullet\,;z_n,r_n)-\mathcal L(x_0,s_0;z_n,r_n)$ converges to $\h$, uniformly on compact sets.
\end{thm}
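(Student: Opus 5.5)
The plan is to split into a degenerate case and a genuine two-component case. The degenerate case covers $a_-=-\infty$, $a_+=-\infty$, or $\tht\notin\baddir$. In it $\h(\abullet)=W^{\tht\sig}(\,\abullet\,;x_0,s_0)$ for a single sign $\sigg$, since $W^{\tht-}=W^{\tht+}$ when $\tht\notin\baddir$ by \eqref{exceptional}. Here I would take $(z_n,r_n)=\geo\from{(x_0,s_0)}\dir{L}{\tht}{\sig}(r_n)$ for any $r_n\to\infty$, so that $z_n/r_n\to\tht$ because the geodesic is $\tht$-directed.

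To obtain uniform convergence on a compact box $K=[-A,A]\times[-A,A]$, I would use the ordering \eqref{geo:mono} and coalescence \eqref{geo:coal1}--\eqref{geo:coal2}. Together they should show that all geodesics $\geo\from{(x,s)}\dir{S}{\tht}{\sig}$ with $(x,s)\in K$ have coalesced with $\geo\from{(x_0,s_0)}\dir{L}{\tht}{\sig}$ by some finite time $T_K$. The intended route is to first run the geodesics from $K$ forward to time $A$ and sandwich them between the $\tht\sig$-geodesics out of $(\pm B,A)$ for suitable $B$. Beyond $T_K$ the prelimit in \eqref{W-def} is exact, so the difference equals $W^{\tht\sig}$ on $K$ for all large $n$.

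The main case is $\tht\in\baddir$ with $a_-,a_+\in(-\infty,0]$. Set $a=a_+-a_-$. By Proposition \ref{I-prop} and \eqref{(5.6)}, I can fix $x_1<I^{\tht-}_{(x_0,s_0),a}(s_0)$ and $x_2>I^{\tht+}_{(x_0,s_0),a}(s_0)$. I would moreover choose them so that the function $\Phi(b)=\h_b(x_2,s_0)-\h_b(x_1,s_0)$ is strictly monotone in $b$ near $a$. Here $\h_b$ denotes the normalized combination $(W^{\tht-}+b_-)\vee(W^{\tht+}+b_+)$ with $b_+-b_-=b$, and $b$ ranges over $[-\infty,\infty]$.

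Write $g_z(\abullet)=\mathcal L(\,\abullet\,;z,r)-\mathcal L(x_0,s_0;z,r)$. For each large $r_n$, let $u_n$ and $v_n$ be the space coordinates of $\geo\from{(x_0,s_0)}\dir{L}{\tht}{-}(r_n)$ and $\geo\from{(x_0,s_0)}\dir{R}{\tht}{+}(r_n)$, so $u_n\le v_n$. By the degenerate case, on any fixed compact set $g_{u_n}=W^{\tht-}(\,\abullet\,;x_0,s_0)$ and $g_{v_n}=W^{\tht+}(\,\abullet\,;x_0,s_0)$ once $n$ is large. The scalar $\psi_n(z)=g_z(x_2,s_0)-g_z(x_1,s_0)$ is continuous in $z$, by continuity of $\mathcal L$. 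It takes the value $\Phi(-\infty)$ at $z=u_n$ and $\Phi(+\infty)$ at $z=v_n$, and $\Phi(a)$ lies between these. The intermediate value theorem then gives $z_n\in[u_n,v_n]$ with $\psi_n(z_n)=\Phi(a)$, and $z_n/r_n\to\tht$ because $u_n/r_n$ and $v_n/r_n$ both tend to $\tht$.

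It remains to show that $g_{z_n}\to\h$ locally uniformly, which I would do through precompactness plus uniqueness of the subsequential limit. Since $u_n\le z_n\le v_n$, the target comparison \cite[Lemma B.4(i)]{Bus-Sep-Sor-24} should sandwich the spatial increments of $g_{z_n}$ between those of $g_{u_n}$ and $g_{v_n}$, that is, between the $W^{\tht-}$ and $W^{\tht+}$ increments. Combined with a bound on temporal increments via geodesics from $K$, this gives local boundedness and equicontinuity, and Arzel\`a--Ascoli yields subsequential local-uniform limits.

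By Theorem \ref{Buslim}, every subsequential limit has the form $\h_b$. Passing to the limit in $\psi_n(z_n)=\Phi(a)$ gives $\Phi(b)=\Phi(a)$. Injectivity of $\Phi$ then forces $b=a$, and since $b_-\vee b_+=0$ this means $\h_b=\h$. So the whole sequence converges to $\h$.

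I expect the main obstacle to be equicontinuity in time on compacts. For spatial increments the sandwich argument above handles it. For temporal increments I would try to bound $g_z(x,s)-g_z(x,s')$ using the geodesic from $(x,s)$ to $(z,r_n)$. By \cite[Theorem 6.5(i)]{Bus-Sep-Sor-24}, its position at time $s'$ is confined between $\tht\pm$ semi-infinite geodesics, so the increment reduces to $\mathcal L$ over a compact set plus an already controlled spatial increment. Choosing $x_1,x_2$ to make $\Phi$ injective is a secondary difficulty, which I would settle using \eqref{(5.6)} and the monotonicity from the proof of Proposition \ref{I-prop}.
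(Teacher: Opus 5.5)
Your argument is correct, and it takes a genuinely different route from the paper's.

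\textbf{The paper's route.} The paper works one compact set $K$ at a time. With $p=(I^{\tht-}_{(x_0,s_0),a}(s_0),s_0)$ and $\gamma^-,\gamma^+$ the $\tht-$ and $\tht+$ Busemann geodesics out of $p$, it chooses $z_r(K)$ as the last endpoint between $\gamma^-(r)$ and $\gamma^+(r)$ whose leftmost geodesic from $p$ still passes through $\gamma^-(T_K')$. Lemma \ref{lm9.1} and a crossing argument (claim \eqref{claim1114}) then show two things. Points of $K$ to the left of the interface route through $\gamma^-(T_K')$, and points to its right route through $\gamma^+(T_K')$. This gives the exact identity \eqref{goal1} on $K$ for all $r\ge T''_K$, and a diagonal extraction over an exhaustion finishes the proof.

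\textbf{Your route.} You impose a single scalar constraint by the intermediate value theorem and let the rigidity in Theorem \ref{Buslim} identify every subsequential limit. This bypasses Lemma \ref{lm9.1} and the interface geometry, and it produces one sequence $z_n$ not tied to any compact. It also uses Theorem \ref{Buslim} without circularity. In exchange, you must prove equicontinuity yourself. You also get only convergence, not the paper's eventual exact agreement with its explicit description of which Busemann geodesics the prelimit follows.

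\textbf{Injectivity of $\Phi$.} $\Phi$ is not globally injective. Write $D=W^{\tht-}(\abullet,s_0;x_0,s_0)-W^{\tht+}(\abullet,s_0;x_0,s_0)$, which is nonincreasing. Then $\Phi$ is nondecreasing, constant for $b\le D(x_2)$ and for $b\ge D(x_1)$, and equal to $\Phi(a)+b-a$ in between. Since $D(x_2)<a<D(x_1)$, you still get $\Phi^{-1}(\Phi(a))=\{a\}$, which is all you need.

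\textbf{Time increments.} The result \cite[Theorem 6.5(i)]{Bus-Sep-Sor-24} is asymptotic for a fixed starting point, so making it uniform over $K$ would take extra work. A simpler argument is available:
\begin{itemize}
\item For $(x,s')\in K$ and $n$ large, coalescence gives $\mathcal L(x,s';u_n,r_n)=W^{\tht-}(x,s';u_n,r_n)$ and $\mathcal L(x,s';v_n,r_n)=W^{\tht+}(x,s';v_n,r_n)$.
\item By \eqref{W-eternal}, the same quantities satisfy $\le$ at every other starting point $y$.
\item Combined with your endpoint monotonicity, this gives $g_{z_n}(y,s')-g_{z_n}(x,s')\le E(y,s')$ for all $y\in\R$. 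Here $E=W^{\tht+}(\abullet;x,s')\vee W^{\tht-}(\abullet;x,s')$, which is eternal by Lemma \ref{eternal-cvx}.
\item Then \eqref{composition} and \eqref{superadditivity} give
\[
\mathcal L(x,s;x,s')\le g_{z_n}(x,s)-g_{z_n}(x,s')\le E(x,s),\qquad s<s'.
\]
\end{itemize}
Both bounds vanish uniformly on compacts as $s'-s\downarrow0$, by \eqref{L-bound} and the continuity of $W^{\tht\pm}$. Together with your spatial sandwich and $g_{z_n}(x_0,s_0)=0$, this gives the local boundedness and equicontinuity that Arzel\`a--Ascoli needs.
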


\begin{rmk}
 The convergence in Theorem \ref{thm:convcomb} for compact sets contained in a single time slice $\R\times\{t\}$, $t\in\R$, follows from \cite[Theorem 1.14]{Bha-Bus-Sor-25-}. The main additional difficulty is extending this convergence to arbitrary compact subsets of $\R^2$.
\end{rmk}

\begin{proof}[Proof of Theorem \ref{thm:convcomb}]
If $a_-=-\infty$, then $\h(x,s)=W^{\tht+}(x,s;x_0,s_0)$, and the claim follows from the coalescence  \eqref{geo:coal2}, the definition \eqref{W-def}, and the $\tht$-directedness of $W^{\tht+}$-geodesics in \cite[Theorem 5.9(ii)(d)]{Bus-Sep-Sor-24}. The same argument applies when $a_+=-\infty$, as well as in the case $\tht\notin\baddir$ (where $W^{\tht-}=W^{\tht+}$). Hence, we may assume that $a_-,a_+\in\R$ and $\tht\in\baddir$. 

 We first work with a fixed compact set $K\subset\R^2$ and construct $z_r(K)\in\R$ and $T_K''<\infty$ such that $z_r(K)/r\to\tht$ as $r\to\infty$ and
 \begin{align}\label{goal1}
 \h(x,s)-\h(y,t)=\LL(x,s;z_r(K),r)-\LL(y,t;z_r(K),r),\quad\forall(x,s),(y,t)\in K
 \text{ and }\forall r\ge T_K''.
 \end{align}
Once this is achieved, we will cover $\R^2$ with an increasing sequence of compact sets and extract the sequence $(z_n,r_n)$ as in the statement of the theorem.\smallskip

Let  $ I^- = I^{\tht-}_{(x_0,s_0),a} $, defined in \eqref{I}. Abbreviate $p=I^-(s_0)$.
Let $ K\subset \R^2 $ be a compact such that $p\in K$.  
Let $\gamma^-=\geo\from{p}^{L,\tht-}$ and $\gamma^+=\geo\from{p}^{R,\tht+}$.
By \cite[Theorem 7.1(iii)]{Bus-Sep-Sor-24}, there exists $ T_K > \sup\{t:(y,t)\in K\}$ such that, for both $\sigg\in\{-,+\}$, all the $W^{\tht\sig}$-Busemann geodesics out of points in $K$ have coalesced with $\gamma^\sig$ by time $T_K$.  
Let $K'$ be the closure of
\[\bigcup_{(x,s)\in K} \Bigl\{\geo\from{(x,s)}^{S,\tht\sig}([s,T_K]):\sigg\in\{-,+\},S\in\{L,M,R\}\Bigr\},\]
the set of all semi-infinite geodesics, from points in $K$, running up to time $T_K$.
By Lemma 5.13 in \cite{Bus-Sep-Sor-24}, $K'$ is compact. Then, Theorem 7.1(iii) in \cite{Bus-Sep-Sor-24} again gives a time $ T_K' > T_K $ by which, for $\sigg\in\{-,+\}$, all $W^{\tht\sig}$-Busemann geodesics out of $ K' $ have coalesced with $\gamma^\sig$. By \eqref{sign}, we can choose $T_K'$ even larger, if necessary, to also have $\gamma^-(T_K')<\gamma^+(T_K')$.

    Given $x,s,y,t\in\R$ with $t>s$, let $\geo_{(x,s):(y,t)}^S$, $S\in\{L,R\}$, denote, respectively, the leftmost and rightmost geodesics from $(x,s)$ to $(y,t)$. These paths exist by \cite[Lemma 13.2]{Dau-Ort-Vir-22}. 
        
		Next, for $ r\geq T_K' $, define 
		\begin{align*}
			z_r(K)= \sup\bigl\{z:\gamma^-(r)\le (z,r)\le\gamma^+(r) \, : \, \geo^L_{p:(z,r)}(T_K') = \gamma^-(T_K')  \bigr\},
		\end{align*}
		and write $ v_r(K) = (z_r(K),r) $. Since $K$ is fixed for the moment, we omit the dependence on it from the notation.
        The set in the supremum is nonempty since $ \gamma^- $ is the leftmost geodesic between any two of its points and hence $z$ with $(z,r)=\gamma^-(r)$ belongs to the set. The $\tht$-directedness of $\gamma^-$ and $\gamma^+$ in \cite[Theorem 5.9(ii)(d)]{Bus-Sep-Sor-24} implies that $z_r/r\to\tht$ as $r\to\infty$.

 By Lemma \ref{lm9.1}, there exists $T_K'' > T_K'$ such that for all $ r \geq T_K'' $ and $ (x,s) \in K' $,
		\begin{align}
			&\geo^L_{(x,s):v_r}\big|_{[s,T_K']} \in\Bigl\{\geo^{L,\tht-}_{(x,s)}\big|_{[s,T_K']}\,,\,\geo^{L,\tht+}_{(x,s)}\big|_{[s,T_K']}\Bigr\}
            \quad\text{and}\label{aux:ef9.1L}\\[5pt]
			&\geo^R_{(x,s):v_r}\big|_{[s,T_K']}
            \in
            \Bigl\{\geo^{R,\tht-}_{(x,s)}\big|_{[s,T_K']}\,,\,\geo^{R,\tht+}_{(x,s)}\big|_{[s,T_K']}\Bigr\}.
            \label{aux:ef9.1R}
		\end{align}

        By planarity and the continuity of $\LL$,  $\geo_{p:(x,r)}^L$ converges to $\geo_{p:(z,r)}^L$ as $x\nearrow z$. Thus, the supremum in the definition of $z_r$ is attained and
        \begin{align}\label{aux1116}
        \geo_{p:v_r}^L(T'_K)=\gamma^-(T'_K).
        \end{align}
        Similarly, by the continuity of $\LL$, as $z\searrow z_r$, $\geo_{p:(z,r)}^L$ converges to a geodesic $\tau$ from $p$ to $v_r$. Planarity and the leftmost property imply that these paths are nonincreasing and then Dini's theorem \cite[Theorem 7.13]{Rud-76} says the convergence is uniform. Theorem 1.18 in \cite{Bat-Gan-Ham-22} then implies that $\geo_{p:(z,r)}^L(T'_K)=\tau(T'_K)$, once $z$ is close enough to $z_r$. But the maximality of $z_r$ implies that $\geo_{p:(z,r)}^L(T_K')>\gamma^-(T_K')$ for all $z>z_r$. Therefore, $\gamma^-(T'_K)<\tau(T'_K)\le\geo_{p:v_r}^R(T_K')$ and, by \eqref{aux:ef9.1R},
        \begin{align}\label{aux1122}
        \geo^R_{p:v_r}(T'_K)=\gamma^+(T'_K).
        \end{align}
        
       We claim that for any $(x,s)\in K$, 
    \begin{align}\label{claim1114}
    x\le I^-(s)\Longrightarrow\geo^L_{(x,s):v_r}(T_K')=\gamma^-(T_K')\quad\text{and}\quad
    x\ge I^-(s)\Longrightarrow \geo^R_{(x,s):v_r}(T_K')=\gamma^+(T_K').
    \end{align}
    We prove the second implication, the first being similar. Let $ (x,s) \in K $ such that $  x\ge I^-(s)$.  By \eqref{aux:ef9.1R} and the definition of $ T_K' $, $ \geo^R_{(x,s):v_r}(T_K')\in \{\gamma^-(T_K'),\gamma^+(T_K')\} $, so suppose for contradiction that 
    \begin{align}\label{aux1123}
    \geo^R_{(x,s):v_r}(T_K') = \gamma^-(T_K').
    \end{align}
		
		By \eqref{aux1116}-\eqref{aux1122} and $\gamma^\sig$ being a $W^{\tht\sig}$-geodesic, $\sigg\in\{-,+\}$,
		\begin{align*}
        W^{\tht-}(p;\gamma^-(T_K')) + \LL(\gamma^-(T_K');v_r)
            &= \LL(p;\gamma^-(T_K')) + \LL(\gamma^-(T_K');v_r) = \LL(p;v_r) \\
			= \LL(p;\gamma^+(T_K')) + \LL(\gamma^+(T_K');v_r) &= W^{\tht+}(p;\gamma^+(T_K'))+\LL(\gamma^+(T_K');v_r).
		\end{align*}
		Rearranging yields
		\begin{align}\label{aux:subs}
			\LL(\gamma^-(T_K');v_r) = W^{\tht+}(p;\gamma^+(T_K')) - W^{\tht-}(p;\gamma^-(T_K'))+\LL(\gamma^+(T_K');v_r) .
		\end{align}
		
		Since $ x\ge I^-(s) $, parts \eqref{I-prop.b} and \eqref{I-prop.d} of Proposition \ref{I-prop}, together with \eqref{aux912} and the cocycle property \eqref{cocycle}, give $ W^{\tht-}(x,s;p) \leq W^{\tht+}(x,s;p) $. On the other hand, since $\gamma^-$ is a $W^{\tht-}$-geodesic, $W^{\tht-}(p;\gamma^-(T_K))=\LL(p;\gamma^-(T_K))$ and, using \eqref{W-eternal}, we have $\LL(p;\gamma^-(T_K))\le W^{\tht+}(p;\gamma^-(T_K))$. This and $\geo_{(x,s)}^{L,\tht-}(T_K)=\gamma^-(T_K)$ give us $W^{\tht-}(\geo_{(x,s)}^{L,\tht-}(T_K);p) \ge W^{\tht+}(\geo_{(x,s)}^{L,\tht-}(T_K);p) $. By the continuity of $\geo_{(x,s)}^{L,\tht-}$, $W^{\tht-}$, and $W^{\tht+}$, there exists $ t' \in [s,T_K] $  such that, setting $ q=\geo^{L,\tht-}_{(x,s)}(t')$, we have 
        \begin{align}\label{aux1139}
        W^{\tht-}(q;p) = W^{\tht+}(q;p).
        \end{align}
        Now, by \eqref{aux1123}, $\gamma^-(T'_K)$ is on a geodesic from $(x,s)$ to $v_r$ and, by the definition of $T_K$, it is also on $\geo^{L,\tht-}_{(x,s)}$. From this, we get two conclusions: $q$ is on a geodesic from $(x,s)$ to $\gamma^-(T'_K)$ and $ \LL(q;\gamma^-(T'_K)) = W^{\tht-}(q;\gamma^-(T'_K)) $. This justifies, respectively, the first and third equalities below. Since $q\in K'$, the definition of $T'_K$ ensures that the $W^{\tht+}$-geodesics from $q$ go through $\gamma^+(T'_K)$. This justifies the last equality. Thus,
		\begin{align*}
			\LL(x,s;v_r) &= \LL(x,s;q) + \LL(q;\gamma^-(T_K')) + \LL(\gamma^-(T_K');v_r)\\
			&\!\!\!\overset{\eqref{aux:subs}}= \LL(x,s;q) + \LL(q;\gamma^-(T_K')) +W^{\tht+}(p;\gamma^+(T_K'))- W^{\tht-}(p;\gamma^-(T_K'))+ \LL(\gamma^+(T_K');v_r)\\
			&= \LL(x,s;q) + W^{\tht-}(q;\gamma^-(T_K'))+W^{\tht+}(p;\gamma^+(T_K')) - W^{\tht-}(p;\gamma^-(T_K')) + \LL(\gamma^+(T_K');v_r)\\
			&\!\!\!\overset{\eqref{cocycle}}= \LL(x,s;q) +W^{\tht+}(p;\gamma^+(T_K')) + W^{\tht-}(q;p)+ \LL(\gamma^+(T_K');v_r)\\
			&\!\!\!\overset{\eqref{aux1139}}= \LL(x,s;q) +W^{\tht+}(p;\gamma^+(T_K'))+ W^{\tht+}(q;p)+ \LL(\gamma^+(T_K');v_r)\\
			&\!\!\!\overset{\eqref{cocycle}}= \LL(x,s;q) +W^{\tht+}(q;\gamma^+(T_K'))+ \LL(\gamma^+(T_K');v_r)\\
			&= \LL(x,s;q) + \LL(q;\gamma^+(T_K'))+ \LL(\gamma^+(T_K');v_r).
		\end{align*}
		This places $ \gamma^+(T_K') $ on a geodesic from $ (x,s) $ to $ v_r $, contradicting \eqref{aux1123}. We have proved \eqref{claim1114}. 
		
		Now let $ (x,s), (y,t) \in K$ such that $ (x,s),(y,t) \preceq I^- $. Then, by \eqref{claim1114} and the definition of $T'_K$,
		\begin{align*}
			\LL(x,s;v_r)-\LL(y,t;v_r) &= [\LL(x,s;\gamma^-(T_K'))+\LL(\gamma^-(T_K');v_r)] - [\LL(y,t;\gamma^-(T_K'))+\LL(\gamma^-(T_K');v_r)]\\
			&=\LL(x,s;\gamma^-(T_K')) -\LL(y,t;\gamma^-(T_K'))\\
			&= W^{\tht-}(x,s;\gamma^-(T_K')) -W^{\tht-}(y,t;\gamma^-(T_K'))\\
			&= W^{\tht-}(x,s;y,t).
		\end{align*}
		
		When instead $ (x,s),(y,t) \succeq I^- $, the same argument gives
		\begin{align*}
			\LL(x,s;v_r)-\LL(y,t;v_r) = W^{\tht+}(x,s;y,t).
		\end{align*}

		In the remaining case $ (x,s) \preceq I^- \preceq (y,t) $, inserting $p \in I^-$ as an intermediate point gives
		\begin{align*}
			\LL(x,s;v_r)-\LL(y,t;v_r) &=\LL(x,s;v_r)-\LL(p;v_r)+\LL(p;v_r)-\LL(y,t;v_r)\\
			&= W^{\tht-}(x,s;p) + W^{\tht+}(p;y,t).
		\end{align*}
		On the other hand, by Proposition \ref{I-prop}\eqref{I-prop.b}-\eqref{I-prop.d},
		\begin{align*}
			\h(x,s)-\h(y,t) =\begin{cases} 
            W^{\tht-}(x,s;y,t) &\text{if } (x,s),(y,t)\preceq I^-,\\
			W^{\tht+}(x,s;y,t) &\text{if } (x,s),(y,t)\succeq I^-,\\
			W^{\tht-}(x,s;p) + W^{\tht+}(p;y,t) &\text{if } (x,s) \preceq I^- \preceq (y,t).
            \end{cases}
		\end{align*}
        This proves \eqref{goal1}. We now prove the statement of the theorem.

        Let $K_n\subset\R^2$, $n\in\N$, be an increasing sequence of compact sets that exhausts $\R^2$ and such that $(x_0,s_0)$ and $I^-(s_0)$ are in $K_1$. Define $r_n=\max_{1\le m\le n}T''_{K_m}$
        and abbreviate $z_n=z_{r_n}(K_n)$. Then for any $n\ge m$ in $\N$, $r_n\ge T''_{K_m}$ and, by \eqref{goal1}, with $(y,t)=(x_0,s_0)$,
        \[\h(x,s)=\LL(x,s;z_n,r_n)-\LL(x_0,s_0;z_n,r_n).\]
        This implies the uniform convergence, on $K_m$, of $\LL(\abullet;z_n,r_n)-\LL(x_0,s_0;z_n,r_n)$ to $\h$. Since this holds for any $m\in\N$, the theorem is proved.
    \end{proof}

Putting together Lemma \ref{lm:h-growth-rate} and Theorems \ref{thm:eternal+growth->horo}, \ref{Buslim}, and \ref{thm:convcomb} proves Theorem \ref{thm:main2}.\smallskip

We now give a more detailed description of the formula \eqref{h-rep}. 
Equip $\R\times\{-,+\}$ with the lexicographic order: $(\tht_1,\sigg_1)\preceq(\tht_2,\sigg_2)$ if $\tht_1<\tht_2$, or $\tht_1=\tht_2$ and $\sigg_1\le\sigg_2$, where $-\le+$. For $\tht\notin\baddir$, we identify $(\tht,-)=(\tht,+)$, while for $\tht\in\baddir$ these two are distinct.

Since \eqref{geodesics} exhausts all semi-infinite geodesics, for any eternal solution $\h$, point $(x,s)\in\R^2$, and $S\in\{L,R\}$, there exists a unique pair $\vartheta^{S,\h}_{(x,s)}=(\tht,\sigg)$ such that $\geo\from{(x,s)}^{S,\h}$ is a $W^{\tht\sig}$-geodesic. By \eqref{geo:coal2}, this geodesic coalesces with $\geo\from{(z,r)}^{S,\tht\sig}$ for all $(z,r)\in\R^2$.
Let
    \begin{align}\label{def:jumps}
    \jumps^{S,\h}_s=\bigl\{ \vartheta^{S,\h}_{(x,s)} : x\in\R \bigr\}.
    \end{align}
Note that $\jumps^{S,\h}_s$ is never empty.
For $\vartheta\in\R\times\{-,+\}$, define
\begin{align}\label{def:I-general}
I^{L,\h}_{\vartheta}(s)=\sup\bigl\{x:\vartheta^{L,\h}_{(x,s)}\preceq\vartheta\bigr\}\in[-\infty,\infty]
\quad\text{and}\quad
I^{R,\h}_{\vartheta}(s)=\inf\bigl\{x:\vartheta^{R,\h}_{(x,s)}\succeq\vartheta\bigr\}\in[-\infty,\infty],
\end{align}
with the convention that $\sup\emptyset=-\infty$ and $\inf\emptyset=\infty$. These mappings generalize the interfaces defined in \eqref{I}. The next proposition records some of their properties.

\begin{prop}\label{prop:general-I}
There exists an event of full probability on which the following hold, for any eternal solution $\h$ and any $s\in\R$.
     \begin{enumerate} [label={\rm(\alph*)}, ref={\rm\alph*}]   \itemsep=3pt 
      \item\label{prop:general-I.a} For each $S \in \{L,R\}$, $\mathcal V^{S,\h}_r\subset\mathcal V^{S,\h}_s$ for all $r\le s$. 
     \item\label{prop:general-I.b} For all $x\in\R$, $\vartheta^{L,\h}_{(x,s)}\preceq\vartheta^{R,\h}_{(x,s)}$.
     \item\label{prop:general-I.c} For each $S \in \{L,R\}$, the map $x \mapsto \vartheta^{S,\h}_{(x,s)}$ is nondecreasing. Consequently, for $S=L$ {\rm(}respectively, $S=R${\rm)}, the map $\vartheta \mapsto I^{S,\h}_\vartheta(s)$ is the corresponding right-continuous {\rm(}respectively, left-continuous{\rm)} generalized inverse.  
     \item\label{prop:general-I.d} For all $\vartheta_1\preceq\vartheta_2$ in $\R\times\{-,+\}$ and each $S\in\{L,R\}$, $I^{S,\h}_{\vartheta_1}(s)\le I^{S,\h}_{\vartheta_2}(s)$. 
     \item\label{prop:general-I.e} For any $x\in\R$, there exists $\e>0$ such that $\vartheta^{L,\h}_{(\abullet,s)}$ is constant on both $(x-\e,x]$ and $(x,x+\e)$. Consequently, the supremum in the definition of $I^{L,\h}_\vartheta(s)$ is always attained when it is finite.
     \item\label{prop:general-I.f} For any $x\in\R$, there exists $\e>0$ such that $\vartheta^{R,\h}_{(\abullet,s)}$ is constant on both $[x,x+\e)$ and $(x-\e,x)$. Consequently, the infimum in the definition of $I^{R,\h}_\vartheta(s)$ is always attained when it is finite.
     \item\label{prop:general-I.g} For $S=L$ {\rm(}respectively, $S=R${\rm)}, $x \mapsto \vartheta^{S,\h}_{(x,s)}$ is the left-continuous {\rm(}respectively, right-continuous{\rm)} generalized inverse of $\vartheta \mapsto I^{S,\h}_{\vartheta}(s)$. Consequently, the set of jump points of $I^{S,\h}_{\abullet}(s)$ is given by $\jumps^{S,\h}_s$ and the set of jump points of $\vartheta^{S,\h}_{(\abullet,s)}$ is given by 
     \[\Ijumps^{S,\h}_s=\{I^{S,\h}_{\vartheta}(s):\vartheta\in\R\times\{-,+\}\}\cap\R.\]
     \item\label{prop:general-I.h} For every $\tht\in\R$ for which $I^{L,\h}_{(\tht,+)}(s)$ is finite, there exists $\e>0$ such that $I^{L,\h}_{(\tht',\sig')}(s)=I^{L,\h}_{(\tht,+)}(s)$, for all $\tht'\in(\tht,\tht+\e)$ and $\sigg'\in\{-,+\}$.
    Similarly, for every $\tht\in\R$ for which $I^{L,\h}_{(\tht,-)}(s)$ is finite, there exists $\e>0$ such that $I^{L,\h}_{(\tht',\sig')}(s)=I^{L,\h}_{(\tht'',\sig'')}(s)$, for all $\tht',\tht''\in(\tht-\e,\tht)$ and $\sigg',\sigg''\in\{-,+\}$.
     \item\label{prop:general-I.i} For every $\tht\in\R$ for which $I^{R,\h}_{(\tht,-)}(s)$ is finite, there exists $\e>0$ such that $I^{R,\h}_{(\tht',\sig')}(s)=I^{R,\h}_{(\tht,-)}(s)$, for all $\tht'\in(\tht-\e,\tht)$ and $\sigg'\in\{-,+\}$.
    Similarly, for every $\tht\in\R$ for which $I^{R,\h}_{(\tht,+)}(s)$ is finite, there exists $\e>0$ such that $I^{R,\h}_{(\tht',\sig')}(s)=I^{R,\h}_{(\tht'',\sig'')}(s)$, for all $\tht',\tht''\in(\tht,\tht+\e)$ and $\sigg',\sigg''\in\{-,+\}$.
     \end{enumerate}
\end{prop}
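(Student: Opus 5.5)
The plan is to derive every part from the structure of $\h$-geodesics in Theorem \ref{thm:h-geo} and the ordering \eqref{geo:mono}, \eqref{W:mono} of the Busemann geodesics.

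\textbf{Parts (a) and (b).} For (a), take $r\le s$ and $x\in\R$. By Theorem \ref{thm:h-geo}\eqref{thm:h-geo.d}, the path $\geo\from{(x,r)}^{S,\h}$ restricted to $[s,\infty)$ equals $\geo\from{(y,s)}^{S,\h}$, where $(y,s)=\geo\from{(x,r)}^{S,\h}(s)$. Since both paths share a tail, they have the same direction and sign, so $\vartheta^{S,\h}_{(x,r)}=\vartheta^{S,\h}_{(y,s)}\in\jumps^{S,\h}_s$. For (b), Theorem \ref{thm:h-geo}\eqref{thm:h-geo.h} gives $\geo\from{(x,s)}^{L,\h}\preceq\geo\from{(x,s)}^{R,\h}$. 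Two semi-infinite geodesics that are ordered have ordered directions. If the directions agree with label $\tht\in\baddir$, then \eqref{sign} separates the $-$ and $+$ families eventually, with $-$ on the left. Hence the left path cannot carry label $+$ while the right path carries $-$, and the labels satisfy $\vartheta^{L}\preceq\vartheta^{R}$.

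\textbf{Parts (c), (d) and (g).} For (c), take $x<z$. Theorem \ref{thm:h-geo}\eqref{thm:h-geo.f} gives $\geo\from{(x,s)}^{S,\h}\preceq\geo\from{(z,s)}^{S,\h}$, and the same tail argument as in (b) shows the labels are nondecreasing. Once the map $x\mapsto\vartheta^{S,\h}_{(x,s)}$ is monotone, \eqref{def:I-general} defines $I^{S,\h}_\vartheta(s)$ exactly as its generalized inverse. Part (d) is immediate from the monotonicity of the sets appearing in \eqref{def:I-general}. Part (g) is the standard duality between a monotone step function and its generalized inverse. It applies once (e) and (f) show that $\vartheta^{S,\h}_{(\abullet,s)}$ is piecewise constant with the stated one-sided continuity: left-continuous for $L$, right-continuous for $R$. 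The jump points of each map are then the values taken by the other, which gives the description via $\jumps^{S,\h}_s$ and $\Ijumps^{S,\h}_s$.

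\textbf{Parts (e) and (f).} Use Theorem \ref{thm:h-geo}\eqref{thm:h-geo.j} with some $\e>0$. For $z\in(x-\delta,x]$, the path $\geo\from{(z,s)}^{L,\h}$ agrees with $\geo\from{(x,s)}^{L,\h}$ from time $s+\e$ on, so the two carry the same label. For $z,z'\in[x,x+\delta)$ (where $z'$ may exceed $x$), the paths $\geo\from{(z,s)}^{L,\h}$ and $\geo\from{(z',s)}^{L,\h}$ agree after time $s+\e$, so the label is constant there. Part (f) is symmetric. Attainment of the supremum in $I^{L,\h}$ then follows from left-continuity: the supremum point $x^*$ has the same label as points just to its left.

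\textbf{Parts (h) and (i).} I expect this to be the main obstacle, since here the label $\vartheta$ varies rather than $x$. The idea is to show that the labels taken on a neighbourhood cannot accumulate at $(\tht,+)$ from above. Suppose $I^{L,\h}_{(\tht,+)}(s)=x^*$ is finite. By (e), the label is constant, say equal to $\vartheta'\succ(\tht,+)$, on $(x^*,x^*+\e)$. Hence no label lies strictly between $(\tht,+)$ and $\vartheta'$ on the interval $(x^*-\e,x^*+\e)$. By monotonicity, the values $I^{L,\h}_{\vartheta}(s)$ for $(\tht,+)\preceq\vartheta\prec\vartheta'$ all equal $x^*$. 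Taking $\e'$ with $(\tht+\e',\abullet)\preceq\vartheta'$ gives the first claim of (h). The second claim of (h), near $(\tht,-)$ from the left, works the same way using constancy on $(x^*-\e,x^*]$. Part (i) is the mirror argument for $R$. The delicate point is confirming that the attained label $\vartheta'$ is strictly greater than $(\tht,+)$, so that a gap $\e'>0$ in $\tht$ exists; this follows from the definition of $x^*$ as a supremum.
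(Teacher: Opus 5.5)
Your arguments for (a)--(g), for the first half of (h) and for the first half of (i) are essentially the paper's. One small slip in (e): the right-hand constancy must be stated for $z,z'\in(x,x+\delta)$, not $[x,x+\delta)$. Including $z=x$ would make $\vartheta^{L,\h}_{(\abullet,s)}$ locally, hence globally, constant, which is false in general.

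The genuine gap is in the second half of (h), and in its mirror, the second half of (i). You dispose of it with ``works the same way using constancy on $(x^*-\e,x^*]$.'' Put $x^*=I^{L,\h}_{(\tht,-)}(s)$. Attainment gives $\vartheta^{L,\h}_{(x^*,s)}\preceq(\tht,-)$. Running the argument ``the same way'' would need the mirror of your delicate point, namely that this label is \emph{strictly} below $(\tht,-)$. That does not follow from the definition of $x^*$, and it fails exactly when $(\tht,-)$ is itself a label at time $s$. For instance, take $\tht\notin\baddir$ with the label $\tht$ occupying an interval $(z,x^*]$. Then constancy on $(x^*-\e,x^*]$ only returns the label $(\tht,-)$ and produces no gap below $\tht$. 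The values $I^{L,\h}_{(\tht',\pm)}(s)$ with $\tht'<\tht$ are governed by the left end $z$ of the $(\tht,-)$-region, not by $x^*$.

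The repair is the case analysis the paper carries out. Work at $z=\sup\{y:\vartheta^{L,\h}_{(y,s)}\precneq(\tht,-)\}\le x^*$ instead of at $x^*$; the two coincide exactly when the label at $x^*$ is strictly below $(\tht,-)$.
\begin{itemize}
\item If $z=-\infty$, then $I^{L,\h}_{(\tht',\pm)}(s)=-\infty$ for every $\tht'<\tht$, and the claim is trivial.
\item If $z$ is finite, left-continuity from (e) makes this supremum attained. So $\vartheta^{L,\h}_{(z,s)}=(\tht_2,\cdot)$ with $\tht_2<\tht$, while every label at $y>z$ is $\succeq(\tht,-)$. Hence $I^{L,\h}_{(\tht',\pm)}(s)=z$ for all $\tht'\in(\tht_2,\tht)$.
\end{itemize}
The second half of (i) needs the same split, mutatis mutandis. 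There one uses $z=\inf\{y:\vartheta^{R,\h}_{(y,s)}\succneq(\tht,+)\}\ge I^{R,\h}_{(\tht,+)}(s)$, with attainment coming from (f) and the trivial case being $z=+\infty$.
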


\begin{proof}
If $\vartheta^{S,\h}_{(z,r)}=(\tht,\sigg)$, then $\geo\from{(z,r)}^{S,\h}$ coalesces with $W^{\tht\sig}$-geodesics. By Theorem \ref{thm:h-geo}\eqref{thm:h-geo.d}, the same coalescence holds for $\geo\from{(x,s)}^{S,\h}$, where $(x,s)=\geo\from{(z,r)}^{S,\h}(s)$. Thus, 
$\vartheta^{S,\h}_{(x,s)}=(\tht,\sigg)$. This shows that $\mathcal V^{S,\h}_r\subset\mathcal V^{S,\h}_s$, proving part \eqref{prop:general-I.a}.
\smallskip

By Theorem \ref{thm:h-geo}\eqref{thm:h-geo.a}, $\geo\from{(x,s)}^{L,\h}\preceq\geo\from{(x,s)}^{R,\h}$. Then, denoting $\vartheta^{L,\h}_{(x,s)}=(\tht_1,\sigg_1)$ and $\vartheta^{R,\h}_{(x,s)}=(\tht_2,\sigg_2)$, we have $\geo\from{(x,s)}^{L,\tht_1\sig_1}\preceq\geo\from{(x,s)}^{L,\tht_2\sig_2}$ and the ordering \eqref{geo:mono}, together with \eqref{sign}, implies $\vartheta_1\preceq\vartheta_2$. This proves part \eqref{prop:general-I.b}. The monotonicity claim in  part \eqref{prop:general-I.c} follows similarly, using  Theorem \ref{thm:h-geo}\eqref{thm:h-geo.f}. Then the definitions in \eqref{def:I-general} are precisely those of the right-continuous and left-continuous generalized inverses of $x\mapsto\vartheta^{S,\h}_{(x,s)}$, $S\in\{L,R\}$, respectively.
Part \eqref{prop:general-I.c} is proved. 
Part \eqref{prop:general-I.d} is immediate from the definition \eqref{def:I-general}.\smallskip

Part \eqref{prop:general-I.e}. By Theorem \ref{thm:h-geo}\eqref{thm:h-geo.j}, once $z<x$ is close enough to $x$, $\geo\from{(z,s)}^{L,\h}$ coalesces with $\geo\from{(x,s)}^{L,\h}$. This implies that $\vartheta^{L,\h}_{(z,s)}=\vartheta^{L,\h}_{(x,s)}$. 
This proves the first claim. The second claim also follows from Theorem \ref{thm:h-geo}\eqref{thm:h-geo.j}, as we have that once $y,y'\in(x,\infty)$ are close enough to $x$, $\geo\from{(y,s)}^{R,\h}$ coalesces with $\geo\from{(y',s)}^{R,\h}$.
Part \eqref{prop:general-I.f} is proved similarly.\smallskip

Part \eqref{prop:general-I.g}.  The continuity statements follow from part \eqref{prop:general-I.e}, and the inverse-function relation follows from the definition \eqref{def:I-general}. This then gives the relationship between the jump points and the ranges.

Part \eqref{prop:general-I.h}. Let $x=I^{L,\h}_{(\tht,+)}(s)$. Then for any $y>x$, $\vartheta^{L,\h}_{(y,s)}\succneq(\tht,+)$. This and the constancy in part \eqref{prop:general-I.e} imply the existence of $\tht_1>\tht$ and $\sigg_1\in\{-,+\}$ with $\vartheta^{L,\h}_{(y,s)}=(\tht_1,\sigg_1)$ for all $y>x$ close to $x$. Then for any $\tht'\in(\tht,\tht_1)$ and $\sigg'\in\{-,+\}$, we must have $I^{L,\h}_{(\tht',\sig')}(s)=x$. 
This proves the first claim. 

For the second claim, let $x=I^{L,\h}_{(\tht,-)}(s)$. Then, by the left continuity of $\vartheta^{L,\h}_{(\abullet,s)}$ from part \eqref{prop:general-I.e}, $\vartheta^{L,\h}_{(x,s)}\preceq(\tht,-)$. If $\vartheta^{L,\h}_{(x,s)}\precneq(\tht,-)$, then the same argument as above gives that for $\tht'<\tht$ close to $\tht$, the maximum $I^{L,\h}_{(\tht',\sig')}(s)$ remains at $x$ and we are done.
If, on the other hand, $\vartheta^{L,\h}_{(x,s)}=(\tht,-)$, then the constancy in part \eqref{prop:general-I.e} says that $\vartheta^{L,\h}_{(y,s)}=(\tht,-)$ for all $y<x$ close enough to $x$. Now, either $\vartheta^{L,\h}_{(y,s)}=(\tht,-)$ for all $y<x$, in which case $I^{L,\h}_{(\tht',\sig')}(s)=-\infty$ for all $\tht'<\tht$ and $\sigg'\in\{-,+\}$, and we are again done. Or 
$z=\sup\{y<x:\vartheta^{L,\h}_{(y,s)}\precneq(\tht,-)\}\in(-\infty,x)$,
in which case we must have $\vartheta^{L,\h}_{(z,s)}=(\tht_2,\sigg_2)$ with $\tht_2<\tht$ and $\sigg_2\in\{-,+\}$,   
and then for any $\tht'\in(\tht_2,\tht)$, $I^{L,\h}_{(\tht',\sig')}(s)=z$.  Part \eqref{prop:general-I.h} is proved. Part \eqref{prop:general-I.i} is proved similarly. 
\end{proof}

Our last result describes how the representation \eqref{h-rep} evolves backward in time.

\begin{thm}\label{thm:general-I}
There exists an event of full probability on which the following hold, for any eternal solution $\h$ and any $s\in\R$.
     \begin{enumerate} [label={\rm(\alph*)}, ref={\rm\alph*}]   \itemsep=3pt 
    \item\label{thm:general-I.a} For any $\vartheta\in\R\times\{-,+\}$, $S\in\{L,R\}$,  $\sigg\in\{-,+\}$, and $r\in\R$, $I^{S,\h}_\vartheta(s)=\sigg\infty$ is equivalent to $I^{S,\h}_\vartheta(r)=\sigg\infty$.
    \item\label{thm:general-I.b} For each $S\in\{L,R\}$ and any $\vartheta_1\neq\vartheta_2$ in $\R\times\{-,+\}$, $I^{S,\h}_{\vartheta_1}(s)=I^{S,\h}_{\vartheta_2}(s)$ implies $I^{S,\h}_{\vartheta_1}(r)=I^{S,\h}_{\vartheta_2}(r)$ for all $r\le s$. We then say that  $I^{S,\h}_{\vartheta_1}$ and  $I^{S,\h}_{\vartheta_2}$ coalesced. 
    \item\label{thm:general-I.c} For each $S \in \{L,R\}$ and any distinct $\vartheta_1,\vartheta_2 \in \R\times\{-,+\}$,  if both $I^{S,\h}_{\vartheta_1}$ and $I^{S,\h}_{\vartheta_2}$ are finite, then they eventually coalesce. Consequently, for any $\vartheta_1\precneq\vartheta_2$ in $\jumps^{S,\h}_s$, there exists $r < s$ such that  $\vartheta_2 \notin \jumps^{L,\h}_r$ and $\vartheta_1 \notin \jumps^{R,\h}_r$. 
    \item\label{thm:general-I.d} The representation \eqref{h-rep} can be restricted to $(\tht,\sigg)\in \jumps^{L,\h}_s$:
    \begin{align}\label{h-repV}
    h(x,s)=\sup_{(\tht,\siggg)\in\mathcal V^{S,\h}_s}\bigl\{W^{\tht\sig}(x,s;x_0,s_0)+a^{\tht\sig}_{(x_0,s_0)}(\h)\bigr\}
    \end{align}
More precisely, let $x_1<x_2$ be consecutive jump points of $x\mapsto \vartheta^{L,\h}_{(x,s)}$ {\rm(}equivalently, consecutive values of $I^{L,\h}_{\abullet}(s)${\rm)}, allowing $x_1=-\infty$ or $x_2=\infty$ if there is no preceding or succeeding jump. Set $(\tht,\sigg)=\vartheta^{L,\h}_{(x_2,s)}$, with the convention that if $x_2=\infty$, then $(\tht,\sigg)$ is the value attained after the last jump at $x_1$. Then
\begin{align}\label{h-rep4}
\h(x,s)=W^{\tht\sig}(x,s;x_0,s_0)+a^{\tht\sig}_{(x_0,s_0)}(\h),
\quad \text{for all } x \in (x_1,x_2],
\end{align}
where the endpoint $x_2$ is excluded when infinite. An analogous statement holds with $L$ replaced by $R$, $(x_1,x_2]$ replaced by $[x_1,x_2)$, and $(\tht,\sigg)=\vartheta^{R,\h}_{(x_1,s)}$, with the convention that if $x_1=-\infty$, the interval is $(-\infty,x_2)$ and $(\tht,\sigg)$ is the value before the first jump at $x_2$.
     \end{enumerate}
\end{thm}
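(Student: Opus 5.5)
The plan is to organize everything around the restarting property of Theorem \ref{thm:h-geo}\eqref{thm:h-geo.d}. If $r\le s$ and $(x,r)$ is given, then $\geo\from{(x,r)}^{S,\h}$ passes through $(y,s)=\geo\from{(x,r)}^{S,\h}(s)$. Its continuation from there is $\geo\from{(y,s)}^{S,\h}$, so $\vartheta^{S,\h}_{(x,r)}=\vartheta^{S,\h}_{(y,s)}$. Combined with the monotonicity of Theorem \ref{thm:h-geo}\eqref{thm:h-geo.f}, this shows that the map $x\mapsto y(x)$ from time $r$ to time $s$ is nondecreasing. It also shows that $x\mapsto\vartheta^{S,\h}_{(x,r)}$ factors through $x\mapsto\vartheta^{S,\h}_{(\abullet,s)}$.

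Parts \eqref{thm:general-I.a} and \eqref{thm:general-I.b} then reduce to bookkeeping with generalized inverses. For \eqref{thm:general-I.b}, suppose $I^{S,\h}_{\vartheta_1}(s)=I^{S,\h}_{\vartheta_2}(s)$ with $\vartheta_1\precneq\vartheta_2$. Then no $x$ at time $s$ has label strictly between $\vartheta_1$ and $\vartheta_2$ (more precisely, no label $\preceq\vartheta_2$ that is $\succneq\vartheta_1$). By Proposition \ref{prop:general-I}\eqref{prop:general-I.a}, the same holds at time $r$, which gives $I^{S,\h}_{\vartheta_1}(r)=I^{S,\h}_{\vartheta_2}(r)$.

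For \eqref{thm:general-I.a}, the statement $I^{L,\h}_\vartheta(s)=-\infty$ means every label at time $s$ is $\succneq\vartheta$. This persists backward by part \eqref{prop:general-I.a}. For the forward direction I would argue as follows. Any label attained at a later time $t$ is carried by a geodesic, which by \eqref{geo:coal2} coalesces with the geodesic $\geo\from{(x,s)}^{\tht\sig}$. I would then use the ordering \eqref{geo:mono} together with directedness: a $W^{\tht\sig}$-geodesic of $\h$ emanating at time $t$ far to the left must be crossed by $\h$-geodesics from time $s$. Hence labels at time $t$ already appear at time $s$ up to the leftmost one. The $+\infty$ case is symmetric.

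For \eqref{thm:general-I.c}, I would take two finite interfaces $I_1\le I_2$ with distinct labels. Between them lie points $x$ at time $r$ whose $\h$-geodesics reach time $s$ and carry a label in $(\vartheta_1,\vartheta_2]$. Call that label $(\tht,\sigg)$; the associated $\h$-geodesics are then $W^{\tht\sig}$-geodesics. The plan is to show that the set of starting points at time $r$ with such labels shrinks to empty as $r\to-\infty$. I would get this by comparing with Busemann geodesics from a fixed far-back point. By \cite[Theorem 7.1]{Bus-Sep-Sor-24}, the geodesics $\geo^{L,\tht_1\sig_1}$ and $\geo^{R,\tht_2\sig_2}$ from a common point, for $\vartheta_1\precneq\vartheta_2$, are ordered and separate. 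Conversely, by \eqref{geo:coal2} and the ordering \eqref{geo:mono}, geodesics of differing directions from points at a very negative time $r$ pass time $s$ close together, both being near the $\h$-geodesics from $\geo\from{(x,r)}$. More concretely: fix $x$ at time $s$ in the open gap with label $\vartheta$ strictly between. The backward ancestors at time $r$ are sandwiched between ancestors of neighbouring labels. I would use the fact that a $\vartheta_1$-geodesic and a $\vartheta_2$-geodesic from the same point $(z,r)$ cannot both be $\h$-geodesics of the $L$ type, since $L$-geodesics from a single point are unique. Hence once a single $(z,r)$ has geodesics reaching both sides, the interfaces coincide at $r$.

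The main obstacle is proving that such a common ancestor exists. I would try to show that the ancestor intervals of the two labels at time $r$ become adjacent and their interface point is sent to both. This is the step most likely to need the quantitative coalescence/transversal fluctuation bounds for backward geodesics from \cite{Bus-Sep-Sor-24} and \cite{Ras-Swe-26-b-}. The "consequently" clause of \eqref{thm:general-I.c} then follows from \eqref{thm:general-I.b} and the jump characterization in Proposition \ref{prop:general-I}\eqref{prop:general-I.g}.

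Part \eqref{thm:general-I.d} is the easiest. For $x\in(x_1,x_2]$ the label $\vartheta^{L,\h}_{(x,s)}=(\tht,\sigg)$ is constant by Proposition \ref{prop:general-I}\eqref{prop:general-I.e}/\eqref{prop:general-I.g}. The proof of Theorem \ref{thm:decomp} (the "other direction") then gives $\h(x,s)=W^{\tht\sig}(x,s;x_0,s_0)+a^{\tht\sig}_{(x_0,s_0)}(\h)$ using the $\h$-geodesic $\geo\from{(x,s)}^{L,\h}$. This yields \eqref{h-rep4}. Combined with the inequality \eqref{aux-ge}, restricted to labels in $\jumps^{L,\h}_s$, it gives \eqref{h-repV}. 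The $R$ version is identical using Proposition \ref{prop:general-I}\eqref{prop:general-I.f}.
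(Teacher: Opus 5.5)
\textbf{Parts (a), (b), (d).} Your treatment of (b) and (d) is the paper's argument. For (a), your idea is also the paper's: compare labels through non-crossing of $\h$-geodesics, using eventual separation of distinct labels (\eqref{geo:mono}, \eqref{sign}, directedness) together with Theorem \ref{thm:h-geo}\eqref{thm:h-geo.e}. Two things there need fixing.
\begin{itemize}
\item The step your sketch leaves implicit is that $\h$-geodesics started at time $s$ sweep the whole line at a later time $t$, i.e.\ $\geo\from{(x,s)}^{L,\h}(t)\to\pm\infty$ as $x\to\pm\infty$. This is what guarantees that every $(y,t)$ has an $\h$-geodesic from time $s$ passing on the required side of it. The paper gets it by comparison with Busemann geodesics via \eqref{geo:mono} and \cite[(6.7)]{Bus-Sep-Sor-24}.
\item The conclusion is not that labels at time $t$ already appear at time $s$; Proposition \ref{prop:general-I}\eqref{prop:general-I.a} gives the opposite inclusion. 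What you get is that each label at time $t$ is bounded, on the relevant side, by a label at time $s$.
\end{itemize}

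\textbf{The gap is in part (c).} You correctly see that the interval $(I_1(r),I_2(r)]$, with $I_i=I^{L,\h}_{\vartheta_i}$, of points carrying labels in $(\vartheta_1,\vartheta_2]$ must become empty at some finite $r$. But you leave this as the ``main obstacle'' and point toward quantitative fluctuation bounds and a ``common ancestor.'' That is not the right target. Uniqueness of the $L$-geodesic from a point is tautological, and coalescence of $I_1,I_2$ at time $r$ just means that no point at time $r$ carries a label in $(\vartheta_1,\vartheta_2]$. Also, the claim that geodesics of different directions from very negative times pass time $s$ close together does not follow from \eqref{geo:coal2}, which concerns a single direction and sign.

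The missing idea is a soft contradiction with the nonexistence of bi-infinite geodesics \cite[Proposition 34]{Bha-24}:
\begin{itemize}
\item Suppose $I_1(r)<I_2(r)$ for all $r$. At each $r$, the set of labels in $(\vartheta_1,\vartheta_2]$ is nonempty. It is finite, by local constancy (Proposition \ref{prop:general-I}\eqref{prop:general-I.e}) on the compact interval $[I_1(r),I_2(r)]$. It shrinks as $r$ decreases, by Proposition \ref{prop:general-I}\eqref{prop:general-I.a}. Hence a single label $\vartheta$ survives for all $r$.
\item Set $z_r=I^{L,\h}_{\vartheta}(r)$. By the restart property, Theorem \ref{thm:h-geo}\eqref{thm:h-geo.d}, $\geo\from{(z_r,r)}^{L,\h}$ keeps the label $\vartheta$ at every later time $t$. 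It therefore stays in the strip $\{(y,t):I_1(t)<y\le I_2(t)\}$.
\item A subsequential limit as $r\to-\infty$ is then a bi-infinite geodesic, which is a contradiction.
\end{itemize}
Without this argument (or a substitute), your plan does not prove coalescence.

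\textbf{A caveat on the ``consequently'' clause of (c).} Deducing it from coalescence requires both $I^{S,\h}_{\vartheta_1}$ and $I^{S,\h}_{\vartheta_2}$ to be finite. This fails when $\vartheta_2$ is the largest label. For example, for $\h=(W^{\tht-}+a_-)\vee(W^{\tht+}+a_+)$ with $\tht\in\baddir$ and $a_\pm$ finite, both labels persist at all times. The same caveat applies to the paper's formulation.
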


\begin{rmk}\label{rk:countable}
From \eqref{h-rep} and \eqref{h-repV}, we have
\[h(x,s)=\sup_{(\tht,\siggg)\in\mathcal V^{S,\h}}\bigl\{W^{\tht\sig}(x,s;x_0,s_0)+a^{\tht\sig}_{(x_0,s_0)}(\h)\bigr\},\]
where $\mathcal V^{S,\h}=\bigcup_{r\in\R}\mathcal V^{S,\h}_r$.
Proposition \ref{prop:general-I}\eqref{prop:general-I.a} implies that this union  coincides with the union taken over integer times $r \in \Z$, and, by Proposition \ref{prop:general-I}\eqref{prop:general-I.e}, is therefore countable. Thus, one can reduce the representation \eqref{h-rep}, restricting to this countable set of $(\tht,\sigg)$ jump points. This proves the countability claim in Theorem \ref{thm:main1}.
\end{rmk}

\begin{rmk}\label{rk:h-finite?2}
Following Remark \ref{rk:h-finite?}, it is natural to ask for which choices of coefficients $a^{\tht\sig}$, $(\tht,\sigg)\in\R\times\{-,+\}$, satisfying $\sup_{\tht\sig} a^{\tht\sig}=0$, the function $(x,s)\mapsto \sup_{\tht\sig}\bigl\{W^{\tht\sig}(x,s;x_0,s_0)+a^{\tht\sig}\bigr\}$ takes finite values. The growth condition \eqref{h-growth} suggests that the coefficients $a^{\tht\sig}$ must grow sub-quadratically in $\tht$. A precise statement of this fact can be found in \cite[Proposition 4.3]{Bha-Bus-Sor-26-}.
\end{rmk}

\begin{rmk}
As in the proof of \cite[Lemma 6.14]{Ras-Swe-26-b-}, for $\vartheta\in\R\times\{-,+\}$ and $s\in\R$, the restriction $I^{L,\h}_\vartheta|_{(-\infty,s]}$ is a competition interface in the sense of \cite{Rah-Vir-25}. It follows that $I^{L,\h}_\vartheta$ is continuous. Although we do not use this continuity here, a proof may be found in the recent preprint \cite[Theorem 1.12]{Bha-Bus-Sor-26-}. 
\end{rmk}

\begin{proof}[Proof of Theorem \ref{thm:general-I}] We prove all the claims for the case $S=L$, the case $S=R$ being similar. 

Part \eqref{thm:general-I.a}. We work with $\sigg=+$, the other case being similar. $I^{L,\h}_\vartheta(s)=\infty$ is equivalent to $\vartheta^{L,\h}_{(x,s)}\preceq\vartheta$ for all $x\in\R$, which by part \eqref{prop:general-I.a} implies $\vartheta^{L,\h}_{(x,r)}\preceq\vartheta$ for all $r\le s$ and $x$ in $\R$ and hence $I^{L,\h}_\vartheta(r)=\infty$ for all such $r$. We prove that the same is true for all $r>s$.

Let $(\tht',\sigg')=\vartheta^{L,\h}_{(0,s)}$. Then,
by Proposition \ref{prop:general-I}\eqref{prop:general-I.c}, $\vartheta^{L,\h}_{(x,s)}\succeq(\tht',\sigg')$ for all $x>0$ and, by the ordering \eqref{geo:mono}, $\geo\from{(x,s)}^{L,\h}\succeq\geo\from{(x,s)}^{L,\tht'\sig'}$. 
By \cite[(6.7)]{Bus-Sep-Sor-24}, as $x\to\infty$, $\geo\from{(x,s)}^{L,\tht'\sig'}(r)\to\infty$
%
and hence also $\geo\from{(x,s)}^{L,\h}(r)\to\infty$. 
Take any $y\in\R$ and take $x>0$ large enough so that $\geo\from{(x,s)}^{L,\h}(r)>(y,r)$. Then the continuity of the paths and the coalescence \eqref{geo:coal1} imply that 
$\geo\from{(y,r)}^{L,\h}\preceq\geo\from{(x,s)}^{L,\h}$. 

If it were the case that $\vartheta^{L,\h}_{(y,r)}\succneq\vartheta$, then writing $(\tht_1,\sigg_1)=\vartheta^{L,\h}_{(x,s)}$ and $(\tht_2,\sigg_2)=\vartheta^{L,\h}_{(y,r)}$, we have $(\tht_1,\sigg_1)\preceq\vartheta\precneq(\tht_2,\sigg_2)$. 
Then \eqref{sign} (applied to $\tht_1$) and the monotonicity \eqref{geo:mono}
would make $\geo\from{(y,r)}^{L,\h}$ eventually go strictly right of $\geo\from{(x,s)}^{L,\h}$ and, by the continuity of the paths, the two would have to intersect. Theorem \ref{thm:h-geo}\eqref{thm:h-geo.e} then says the two must coalesce, which contradicts $\vartheta^{L,\h}_{(x,s)}\ne\vartheta^{L,\h}_{(y,r)}$. Thus, $\vartheta^{L,\h}_{(y,r)}\preceq\vartheta$, which implies $I^{L,\h}_{\vartheta}(r)\ge y$. Since $y$ was arbitrary, we get $I^{L,\h}_{\vartheta}(r)=\infty$. Part \eqref{thm:general-I.a} is proved.\smallskip

Part \eqref{thm:general-I.b}. 
We can assume, without loss of generality, that $\vartheta_1\precneq\vartheta_2$. 
Suppose $I^{L,\h}_{\vartheta_1}(s)=I^{L,\h}_{\vartheta_2}(s)$. 
By the right-continuity of $I^{L,\h}_{\abullet}(s)$, this is equivalent to it not having jump points 
$\vartheta$ with $\vartheta_1\precneq\vartheta\preceq\vartheta_2$.
Consequently, $\mathcal V^{L,\h}_s$ does not contain such $\vartheta$ points.
By part \eqref{prop:general-I.a}, this implies that the same holds for $\mathcal V^{L,\h}_r$, for any $r\le s$, which implies that $I^{L,\h}_{\vartheta_1}(r)=I^{L,\h}_{\vartheta_2}(r)$ for all such $r$.
\smallskip

Part \eqref{thm:general-I.c}. Suppose, without loss of generality, that $\vartheta_1\precneq\vartheta_2$ and that $I_1(r)=I^{L,\h}_{\vartheta_1}(r)<I^{L,\h}_{\vartheta_2}(r)=I_2(r)$ for all $r$. By assumption, $I_1$ and $I_2$ are finite (and by part \eqref{thm:general-I.a}, this holds for all $r$). Then Proposition \ref{prop:general-I}\eqref{prop:general-I.h} implies that the set of jump points $\tht$ of $I^{L,\h}_{\abullet}(r)$ 
satisfying $\vartheta_1\precneq\vartheta\preceq\vartheta_2$ is finite and hence closed. By parts \eqref{prop:general-I.a} and \eqref{prop:general-I.g} of that proposition, this set is nondecreasing in $r$. And under the assumption that $I_1(r)<I_2(r)$, this set is not empty. 
Therefore, there exists a $\vartheta$ with $\vartheta_1\precneq\vartheta\preceq\vartheta_2$ and such that $\vartheta$ is a jump point of $I^{L,\h}_{\abullet}(r)$, for all $r$. Let $z_r=I^{L,\h}_{\vartheta}(r)$.
Then $I_1(r)<z_r\le I_2(r)$
and $\vartheta^{L,\h}_{(z_r,r)}=\vartheta$.
Writing $\vartheta=(\tht,\sigg)$, we have that, for any $r\in\R$, $\geo\from{(z_r,r)}^{L,\h}$ coalesces with $W^{\tht\sig}$-geodesics. In particular, all these geodesics coalesce. However, by Theorem \ref{thm:h-geo}\eqref{thm:h-geo.d}, for any $t>r$, $\geo\from{(z_r,r)}^{L,\h}\bigl|_{[t,\infty)}$ is again an $\h$-geodesic that coalesces with $W^{\tht\sig}$-geodesics and hence $\vartheta^{L,\h}_{\geo\from{(z_r,r)}^{L,\h}(t)}=\vartheta$. This implies that $I_1(t)\le\geo\from{(z_r,r)}^{L,\h}(t)\le I_2(t)$. 
In other words, the coalescing geodesics  $\geo\from{(z_r,r)}^{L,\h}$ are all between $I_1$ and $I_2$. Then, there exists a subsequence $r_n\to-\infty$ such that $\geo\from{(z_{r_n},r_n)}^{L,\h}$ has a limit $\gamma$. This limit is now a bi-infinite geodesic, which is not allowed by \cite[Proposition 34]{Bha-24}. This contradiction proves the coalescence claim.  

If $\vartheta_1\precneq\vartheta_2$ are both in $\jumps^{L,\h}_s$, then once $I^{L,\h}_{\vartheta_1}$ and $I^{L,\h}_{\vartheta_2}$ coalesce, say at time $r<s$, we have $(\vartheta_1,\vartheta_2]\cap\jumps^{L,\h}_r=\varnothing$. In particular, $\vartheta_2\notin\jumps^{L,\h}_r$.
Part \eqref{thm:general-I.c} is proved.\smallskip

Part \eqref{thm:general-I.d}. Since $x_1$ and $x_2$ are consecutive jumps of $\vartheta^{L,\h}_{(\abullet,s)}$ and since the function is left-continuous, we must have 
$\vartheta^{L,\h}_{(x,s)}=(\tht,\sigg)$, for all $x\in(x_1,x_2]$. 
(This also works if $x_2=\infty$, in which case the interval is $(x_1,\infty)$.)
This means that $\geo\from{(x,s)}^{L,\h}$ coalesces with $W^{\tht\sigg}$-geodesics. Then, by the definition \eqref{def:a} (using $(y,t)=(x,s)$),
 $a^{\tht\sig}_{(x_0,s_0)}(\h)=\h(x,s)-W^{\tht\sig}(x,s;x_0,s_0)$.
Rearranging gives \eqref{h-rep4}.
\end{proof}

We conclude with a canonical version of \eqref{h-rep} whose
coefficients are maximal and uniquely determined. See
Remark \ref{rk:unique} for further discussion. A similar result appears in \cite[Theorem 1.4]{Bha-Bus-Sor-26-}.

For an eternal solution $\h$ with $\h(x_0,s_0)=0$, define
\begin{align}\label{def:acoef-DL}
\acoef^{\tht\sig}_{(x_0,s_0)}(\h)
=\inf_{(x,s)\in\R^2}
\bigl\{\h(x,s)-W^{\tht\sig}(x,s;x_0,s_0)\bigr\}.
\end{align}
The quantity inside the infimum is real-valued and equals zero at
$(x_0,s_0)$, so the infimum belongs to $\R\cup\{-\infty\}$ and
is at most zero. Equivalently,
\[\acoef^{\tht\sig}_{(x_0,s_0)}(\h)
=\sup\bigl\{
a\in\R\cup\{-\infty\}:
W^{\tht\sig}(\abullet;x_0,s_0)+a\le\h\bigr\}\in[-\infty,0].\]

\begin{thm}\label{thm:h-rep-maximal}
The following holds on a full probability event. Let $\h$ be an
eternal solution that vanishes at $(x_0,s_0)\in\R^2$.
\begin{enumerate}[label={\rm(\alph*)}, ref={\rm\alph*}]
\item\label{thm:h-rep-maximal.a}
For all $(x,s)\in\R^2$,
\begin{align}\label{h-rep-maximal}
\h(x,s)=\sup_{\tht\in\R,\,\siggg\in\{-,+\}}
\bigl\{W^{\tht\sig}(x,s;x_0,s_0)+\acoef^{\tht\sig}_{(x_0,s_0)}(\h)\bigr\}.
\end{align}

\item\label{thm:h-rep-maximal.b}
The coefficients $\acoef^{\tht\sig}_{(x_0,s_0)}(\h)$ are
maximal: if $a^{\tht\sig}\in\R\cup\{-\infty\}$ and
\begin{align}\label{other-rep}
\h=\sup_{\tht\in\R,\,\siggg\in\{-,+\}}
\bigl\{W^{\tht\sig}(\abullet;x_0,s_0)+a^{\tht\sig}\bigr\},
\end{align}
then $a^{\tht\sig}\le\acoef^{\tht\sig}_{(x_0,s_0)}(\h)$
for every $(\tht,\sigg)$.
Consequently, \eqref{h-rep-maximal} is the unique representation
over the full family of Busemann functions in which every
coefficient is maximal.

\item\label{thm:h-rep-maximal.c}
Let $\dirsp^\w$ be the quotient of $\R\times\{-,+\}$ obtained by
identifying $(\tht,-)$ and $(\tht,+)$ whenever
$\tht\notin\baddir$, equipped with the order topology induced by
the lexicographic order. Then the map
\[\dirsp^\w\ni(\tht,\sigg)
\longmapsto\acoef^{\tht\sig}_{(x_0,s_0)}(\h)
\in\R\cup\{-\infty\}\]
is upper semicontinuous.
\end{enumerate}
\end{thm}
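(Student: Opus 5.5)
Part (a) follows by sandwiching the new coefficients between the old ones. By definition, $W^{\tht\sig}(\abullet;x_0,s_0)+\acoef^{\tht\sig}_{(x_0,s_0)}(\h)\le\h$ for every $(\tht,\sigg)$. Hence the right-hand side of \eqref{h-rep-maximal} is at most $\h$. For the reverse inequality we compare with the coefficients $a^{\tht\sig}_{(x_0,s_0)}(\h)$ of \eqref{def:a}. Theorem \ref{thm:decomp}, specifically \eqref{h-rep}, gives $W^{\tht\sig}(x,s;x_0,s_0)+a^{\tht\sig}_{(x_0,s_0)}(\h)\le\h(x,s)$ for all $(x,s)$. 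Taking the infimum over $(x,s)$ yields $a^{\tht\sig}_{(x_0,s_0)}(\h)\le\acoef^{\tht\sig}_{(x_0,s_0)}(\h)$. The supremum in \eqref{h-rep-maximal} therefore dominates the one in \eqref{h-rep}, which equals $\h$. This proves \eqref{h-rep-maximal}.

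Part (b) is the same argument in abstract form. Any representation \eqref{other-rep} gives $W^{\tht\sig}(\abullet;x_0,s_0)+a^{\tht\sig}\le\h$ for each $(\tht,\sigg)$. By the sup-characterization of $\acoef^{\tht\sig}_{(x_0,s_0)}(\h)$, this forces $a^{\tht\sig}\le\acoef^{\tht\sig}_{(x_0,s_0)}(\h)$. Uniqueness follows because $\acoef$ is itself a valid representation by (a) and dominates every other one coefficientwise. Hence it is the only representation in which every coefficient is maximal. One technicality: for $\tht\notin\baddir$ the pairs $(\tht,\pm)$ give the same function and the same $\acoef$, so uniqueness should be read on the quotient $\dirsp^\w$.

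Part (c) is the main step. The map $(\tht,\sigg)\mapsto\acoef^{\tht\sig}_{(x_0,s_0)}(\h)$ is an infimum over $(x,s)$ of the functions $(\tht,\sigg)\mapsto\h(x,s)-W^{\tht\sig}(x,s;x_0,s_0)$. An infimum of upper semicontinuous functions is upper semicontinuous. It therefore suffices to show that, for each fixed $(x,s)$, the map $(\tht,\sigg)\mapsto W^{\tht\sig}(x,s;x_0,s_0)$ is continuous on $\dirsp^\w$ in the order topology. Lower semicontinuity of this map would already be enough.

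I would get this continuity from the one-sided limits of Busemann functions in \cite{Bus-Sep-Sor-24}, namely Theorem 5.1 there. As $\tht'\nearrow\tht$ we have $W^{\tht'\pm}\to W^{\tht-}$, and as $\tht'\searrow\tht$ we have $W^{\tht'\pm}\to W^{\tht+}$, pointwise and in fact locally uniformly. If needed, this can be rederived from the monotonicity \eqref{W:mono} on a time slice together with coalescence of the geodesics $\geo\from{(x,s)}\dir{L}{\tht'}{\sig}$ as $\tht'\to\tht$, which extends the result off the slice via the cocycle property.

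In the order topology on $\dirsp^\w$, the neighborhoods of $(\tht,-)$ with $\tht\in\baddir$ are one-sided intervals $((\tht',\cdot),(\tht,-)]$. The analogous statement holds for $(\tht,+)$, and for $\tht\notin\baddir$ the neighborhoods are two-sided. The one-sided limits above match exactly this topology. This gives continuity of $(\tht,\sigg)\mapsto W^{\tht\sig}(x,s;x_0,s_0)$ on $\dirsp^\w$, and hence upper semicontinuity of $\acoef$.

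The obstacle I expect is verifying carefully that the cited limits hold jointly in space-time and not only on a time slice, on a single full-probability event for all $(x,s)$. I would handle this by rational approximation together with the continuity of $W$ in $(x,s)$.
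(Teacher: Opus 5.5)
Your proposal is correct and follows the paper's own proof. Parts (a) and (b) use the same sandwich argument, bounding $a^{\tht\sig}_{(x_0,s_0)}(\h)\le\acoef^{\tht\sig}_{(x_0,s_0)}(\h)$ via \eqref{h-rep}, and part (c) likewise writes $\acoef^{\tht\sig}_{(x_0,s_0)}(\h)$ as an infimum over $(x,s)$ of the maps $(\tht,\sigg)\mapsto\h(x,s)-W^{\tht\sig}(x,s;x_0,s_0)$, which are continuous on $\dirsp^\w$ by the one-sided limits in \cite[Theorem 5.1]{Bus-Sep-Sor-24}.
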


\begin{proof}
From \eqref{other-rep}, we get $a^{\tht\sig}\le\h(x,s)-W^{\tht\sig}(x,s;x_0,s_0)$ for all $(x,s)\in\R^2$.
Taking the infimum over $(x,s)$ proves the maximality in
\eqref{thm:h-rep-maximal.b}. Applying this to \eqref{h-rep} gives
$a^{\tht\sig}_{(x_0,s_0)}(\h)\le\acoef^{\tht\sig}_{(x_0,s_0)}(\h)$,
and therefore
\[\h\le\sup_{\tht,\siggg}
\bigl\{W^{\tht\sig}(\abullet;x_0,s_0)
+\acoef^{\tht\sig}_{(x_0,s_0)}(\h)\bigr\}.\]
Conversely, the definition \eqref{def:acoef-DL} gives
\[W^{\tht\sig}(\abullet;x_0,s_0)+\acoef^{\tht\sig}_{(x_0,s_0)}(\h)\le\h\]
for every $(\tht,\sigg)$. Taking the supremum proves the reverse
inequality and hence \eqref{h-rep-maximal}. The maximality just
proved also gives the uniqueness claim.

Finally, by \cite[Theorem 5.1(iv)]{Bus-Sep-Sor-24}, for each
fixed $(x,s)\in\R^2$, the map $(\tht,\sigg)\mapsto\h(x,s)-W^{\tht\sig}(x,s;x_0,s_0)$ is continuous on $\dirsp^\w$. Since
$\acoef^{\tht\sig}_{(x_0,s_0)}(\h)$ is the infimum of these
continuous functions over $(x,s)\in\R^2$, it is upper
semicontinuous.
\end{proof}

\begin{rmk}\label{rk:unique}
The coefficients in \eqref{h-rep} satisfy
$a^{\tht\sig}_{(x_0,s_0)}(\h)=\acoef^{\tht\sig}_{(x_0,s_0)}(\h)$ whenever $a^{\tht\sig}_{(x_0,s_0)}(\h)>-\infty$. Indeed, in this case there exists $(y,t)\in\R^2$ such that
\[a^{\tht\sig}_{(x_0,s_0)}(\h)=\h(y,t)-W^{\tht\sig}(y,t;x_0,s_0)\ge\acoef^{\tht\sig}_{(x_0,s_0)}(\h),\]
with the reverse inequality coming from the maximality in Theorem \ref{thm:h-rep-maximal}\eqref{thm:h-rep-maximal.b}.
Nevertheless, strict inequality can occur. Indeed, consider the eternal solution $\h=\sup_{\tht\in[-1,1]}W^{\tht-}(\abullet;x_0,s_0)$.
For every $\tht\in[-1,1]$, $W^{\tht-}(\abullet;x_0,s_0)\le\h$, while both functions vanish
at $(x_0,s_0)$. Hence, $\acoef^{\tht-}_{(x_0,s_0)}(\h)=0$
for every $\tht\in[-1,1]$.
On the other hand, Remark \ref{rk:countable} implies that
$a^{\tht\sig}_{(x_0,s_0)}(\h)>-\infty$ for only countably many
$(\tht,\sigg)$. Consequently, for uncountably many
$\tht\in[-1,1]$, $a^{\tht-}_{(x_0,s_0)}(\h)=-\infty<
0=\acoef^{\tht-}_{(x_0,s_0)}(\h)$.
\end{rmk}

\appendix

\section{Continuity of the KPZ fixed point}

First, we establish a max-plus analogue of the dominated convergence theorem.

\begin{thm}\label{thm:domcv}
Let $d$ be a positive integer and let $S\subset\R^d$ be a nonempty subset.
Let $f_n,f:S\to\R$ be functions. Assume that
$f_n\to f$ locally uniformly on $S$, i.e.\ for every compact $K\subset\R^d$, $f_n\to f$ uniformly on $S\cap K$. Assume further that there exists a function $g:S\to\R$ such that
$f_n(x)\le g(x)$ for all $x\in S$ and all $n$ and 
$g(x)\to -\infty$ as $|x|\to\infty$ within $S$.
Then
\[
\lim_{n\to\infty}\sup_{x\in S} f_n(x)=\sup_{x\in S} f(x).
\]
\end{thm}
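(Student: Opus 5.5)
The plan is to prove the two inequalities $\liminf\ge$ and $\limsup\le$ separately. Write $M=\sup_{x\in S}f(x)$ and $M_n=\sup_{x\in S}f_n(x)$.

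First, the bound $f\le g$ passes to the limit. Each point $x\in S$ lies in a compact ball, so $f_n(x)\to f(x)$ pointwise. Since $f_n(x)\le g(x)$ for every $n$, we get $f\le g$ on $S$.

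\emph{Lower bound.} Fix any $x\in S$. Then $M_n\ge f_n(x)$, and $f_n(x)\to f(x)$, so $\liminf_n M_n\ge f(x)$. Taking the supremum over $x\in S$ gives $\liminf_n M_n\ge M$. This works even if $M=\infty$, although $M<\infty$ will follow from the next step.

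\emph{Upper bound.} This is the step where the domination is used, and the only point needing care. Fix a point $x^*\in S$ and set $c=f(x^*)-1$. Since $g(x)\to-\infty$ as $|x|\to\infty$ within $S$, there is $R<\infty$ with $g(x)<c$ whenever $x\in S$ and $|x|>R$. Let $K$ be the closed ball of radius $R$, so that $K\cap S\ne\varnothing$ may be assumed after enlarging $R$ to include $x^*$.

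For $x\in S\setminus K$ we have $f_n(x)\le g(x)<c$ for all $n$. Hence
\[
M_n\le \max\Bigl(c,\ \sup_{x\in S\cap K}f_n(x)\Bigr).
\]
By uniform convergence on $S\cap K$,
\[
\sup_{x\in S\cap K}f_n(x)\le \sup_{x\in S\cap K}f(x)+\varepsilon_n\le M+\varepsilon_n,
\qquad \varepsilon_n=\sup_{x\in S\cap K}|f_n(x)-f(x)|\to0.
\]
Moreover $c<f(x^*)\le M$. Therefore $M_n\le M+\varepsilon_n$, and so $\limsup_n M_n\le M$.

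Finiteness of $M$ comes out of the same estimate. Since $f\le g<c$ off $K$, we have $M\le\max\bigl(c,\sup_{S\cap K}f\bigr)$. On the compact set $K$, $f$ is within distance $\varepsilon_n$ of $f_n$ for $n$ large. It remains to see that $f_n$, or equivalently $f$, is bounded above on $S\cap K$.

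\emph{The remaining gap.} If $f_n$ is not assumed bounded above on $S\cap K$, I would simply allow $M=+\infty$. In that case the lower bound gives $M_n\to\infty$ directly, so the claimed equality $\lim M_n=M$ still holds in $[-\infty,\infty]$. Combining the lower and upper bounds then gives $\lim_n M_n=M$ in all cases.
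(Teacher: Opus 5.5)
Your proof is correct and follows essentially the same route as the paper: a pointwise lower bound, and an upper bound that uses $g\to-\infty$ to confine the supremum to $S\cap K$ for a ball $K$ (with a threshold below $M$) and then applies uniform convergence on $S\cap K$. Your single threshold $c=f(x^*)-1$ even covers $M=+\infty$ without the paper's separate case; your asides that finiteness of $M$ ``will follow'' are false in general ($f$ need not be bounded above on compacts, since no continuity is assumed), but nothing in your argument uses them, and the conclusion holds in the extended reals, as you eventually note.
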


\begin{proof}
Set
\[M_n=\sup_{x\in S} f_n(x) \quad\text{and}\quad
    M=\sup_{x\in S} f(x).\]
Since $f_n(x)\leq g(x)$ for all $n$ and $f_n(x)\to f(x)$ pointwise, we have
$f(x)\leq g(x)$ for all $x\in S$.
Let $a<M$. Then there exists $x_0\in S$
such that $f(x_0)>a$. Since $f_n(x_0)\to f(x_0)$, we have
$f_n(x_0)>a$ for all sufficiently large $n$. Hence
$\varliminf_{n\to\infty} M_n\geq a$.
Take $a\nearrow M$ to get 
$\varliminf_{n\to\infty} M_n\geq M$.

The upper bound is trivial when $M=\infty$. Assume therefore that $M<\infty$. Fix $a<M$. Since
$g(x)\to-\infty$ as $|x|\to\infty$ within $S$, choose $R>0$ such that
$g(x)\leq a$ for all $x\in S$ with $|x|>R$. 
Then also $f(x)\leq a$ for all such $x$, and therefore
\[M=\sup_{x\in S:\, |x|\leq R} f(x).\]
Moreover, for every $n$,
\[
    M_n
    \leq
    \max\Bigl\{
        \sup_{x\in S:\, |x|\leq R} f_n(x),
        \sup_{x\in S:\, |x|>R} g(x)
    \Bigr\}
    \leq
    \max\Bigl\{
        \sup_{x\in S:\, |x|\leq R} f_n(x),
        a
    \Bigr\}.
\]
Since $f_n\to f$ uniformly on $S\cap\{|x|\leq R\}$,
\[
    \sup_{x\in S:\, |x|\leq R} f_n(x)
    \longrightarrow
    \sup_{x\in S:\, |x|\leq R} f(x)
    =
    M.
\]
Hence
\[
    \varlimsup_{n\to\infty} M_n
    \leq
    \max\{M,a\}
    =
    M.
\]
Combining this with the lower bound gives $M_n\to M$.
\end{proof}

Now we apply the above theorem to prove the continuity of the KPZ fixed point.

\begin{thm}\label{thm:h-continuous}
The following holds on a full probability event. For any $t>s$ and any function $\h:\R\to[-\infty,\infty)$ satisfying 
\begin{align}\label{h-growth2}
    \lim_{|y|\to\infty} |y|^{-1}\bigl(\h(y)-y^2/(t-s)\bigr) = -\infty,
\end{align}
$(x,r)\mapsto\sup_{y\in\R}\{\mathcal L(x,r;y,t)+\h(y)\}$ is continuous on $\R\times[s,t)$, with values in $[-\infty,\infty]$.
\end{thm}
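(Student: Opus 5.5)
The plan is to apply Theorem \ref{thm:domcv} to the functions
\[
f_n(y)=\mathcal L(x_n,r_n;y,t)+\h(y),\qquad f(y)=\mathcal L(x,r;y,t)+\h(y),
\]
where $(x_n,r_n)\to(x,r)$ in $\R\times[s,t)$. Take $S=\{y:\h(y)>-\infty\}$. If $S=\varnothing$, every supremum equals $-\infty$ and continuity is trivial, so assume $S\neq\varnothing$. On $S$ all functions are real-valued. Local uniform convergence $f_n\to f$ on $S$ follows from the continuity of $\mathcal L$ on $\{(x,r,y,t):t>r\}$. This continuity is uniform on compacts, and compacts here mean $(x_n,r_n)$ lying in a compact neighborhood of $(x,r)$ inside $\R\times[s,t)$, times $y$ in a compact set, with $t$ fixed.

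The key step is producing the dominating function $g$. Put $a=1/(t-s)$ and $\delta=t-r>0$. For $n$ large we have $t-r_n\in[\delta/2,2\delta]$ and $|x_n-x|\le1$. Note $t-r_n\le t-s$, so $1/(t-r_n)\ge a$. By \eqref{L-bound}, uniformly in such $n$,
\[
\mathcal L(x_n,r_n;y,t)\le -\frac{(y-x_n)^2}{t-r_n}+o(|y|)\le -\frac{y^2}{t-r_n}+\frac{2|y|(|x|+1)}{t-r_n}+o(|y|).
\]
The $o(|y|)$ term is in fact $O(\log^2|y|)$, with constants uniform over the compact range of $(x_n,r_n)$.

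Adding \eqref{h-growth2} in the form $\h(y)\le ay^2-b|y|$ for $|y|\ge c(b)$, with $b$ arbitrary, gives
\[
f_n(y)\le -\Bigl(\frac1{t-r_n}-a\Bigr)y^2+\Bigl(\frac{4(|x|+1)}{\delta}-b\Bigr)|y|+o(|y|).
\]
The first term is nonpositive. Choosing $b>4(|x|+1)/\delta+1$, we obtain $f_n(y)\le -|y|/2$ for $|y|$ large, uniformly in $n$.

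On a bounded set $\{|y|\le R\}$, we have $f_n\le \sup\mathcal L+\sup_{|y|\le R}\h$. The second term is finite only if $\h$ is bounded above on compacts, and this is the main obstacle. If $\h$ is not locally bounded above, then $\sup_y f_n$ may equal $+\infty$, and the continuity in $[-\infty,\infty]$ must be checked directly. In that case I would argue as follows. If $\sup_{|y|\le R}\h=\infty$ with a blow-up point $y^*$, then since $\mathcal L$ is bounded below near $(x,r,y^*,t)$, the supremum equals $+\infty$ in a whole neighborhood of $(x,r)$, which gives continuity there.

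Otherwise $\h$ is bounded above on compacts, and I define
\[
g(y)=\max\bigl(-|y|/2,\ C_R+\textstyle\sup_{|z|\le R}\h(z)\bigr)\ \text{for } |y|\le R,\qquad g(y)=-|y|/2\ \text{for } |y|>R .
\]
This $g$ is real-valued on $S$ and tends to $-\infty$. Theorem \ref{thm:domcv} then gives $\sup f_n\to\sup f$, which is the desired continuity at $(x,r)$, including at points with $r=s$.

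Two details remain to verify. First, the domination is needed only for large $n$, so the finitely many earlier indices can be discarded. Second, the growth bound at $r=s$ uses $t-r_n\le t-s$, which is exactly why the closed endpoint $s$ is allowed in $\R\times[s,t)$.
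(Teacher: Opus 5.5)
Your proof is correct and follows the paper's route: Theorem \ref{thm:domcv} applied on $S=\{y:\h(y)>-\infty\}$, with local uniform convergence coming from the continuity of $\mathcal L$ and a dominating function built from \eqref{L-bound} and \eqref{h-growth2}. The one difference is that your case split on whether $\h$ is locally bounded above is unnecessary. The paper takes $g(y)=\h(y)+\sup_{(z,q)\in K\times[s,r_1]}\mathcal L(z,q;y,t)$, which is real-valued on $S$ in every case: Theorem \ref{thm:domcv} never requires $g$ to be bounded on compacts, and its lower-bound half already gives $\sup f_n\to\infty$ whenever $\sup f=\infty$.
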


\begin{proof}
Let $S=\{x:\h(x)>-\infty\}$. If $S=\varnothing$, then the claim holds trivially. Assume $S$ is not empty.
 Take $(x_n,r_n)\to(x,r)$ in $\R\times[s,t)$. Let $f_n(y)=\mathcal L(x_n,r_n;y,t)+\h(y)$ and 
 $f(y)=\mathcal L(x,r;y,t)+\h(y)$. The continuity of $\mathcal L$ implies that $f_n\to f$ locally uniformly on $S$, i.e.\ the convergence is uniform on the intersection of $S$ with any compact subset of $\R$. 

There exist a compact interval $K\subset\R$, a number $r_1\in[s,t)$, and $n_0$ such that $x_n,x\in K$ and $r_n,r\in[s,r_1]$, for all $n\ge n_0$. 
Define
\[g(y)=\h(y)+\sup_{(z,q)\in K\times[s,r_1]}\mathcal L(z,q;y,t).\]
Then for every $n\ge n_0$ and every $y\in S$, $f_n(y)\le g(y)$.
Moreover, \eqref{L-bound}, together with \eqref{h-growth2},
implies that
$g(y)\to -\infty$ as $|y|\to\infty$ in $S$.
Therefore, Theorem \ref{thm:domcv} applies, and yields
\[\sup_{x\in\R} f_n(x)=\sup_{x\in S}f_n(x)\to \sup_{x\in S} f(x)=\sup_{x\in\R}f(x).\qedhere\]
\end{proof}

\section{An auxiliary lemma}

\begin{lem}\label{lm9.1}
The following holds on a full probability event.
For any $\tht\in\R$, any compact set $K\subset\R^2$, and any $p=(x,s)\in\R^2$ and $T>s\vee\sup\{t:(y,t)\in K\}$, there exists a finite time $T'>T$ such that for any $(x,s)\in K$, $r\ge T'$, and any $z\in\R$ with $\geo_p^{L,\tht-}(r)\le(z,r)\le\geo_p^{R,\tht+}(r)$, we have,
\begin{align}\label{lm9.1:eqn}
\geo^L_{(x,s):(z,r)}\big|_{[s,T]} \in\Bigl\{\geo^{L,\tht-}_{(x,s)}\big|_{[s,T]}\,,\,\geo^{L,\tht+}_{(x,s)}\big|_{[s,T]}\Bigr\}
            \text{ and }\ 
			\geo^R_{(x,s):(z,r)}\big|_{[s,T]}
            \in
            \Bigl\{\geo^{R,\tht-}_{(x,s)}\big|_{[s,T]}\,,\,\geo^{R,\tht+}_{(x,s)}\big|_{[s,T]}\Bigr\}.
\end{align}
\end{lem}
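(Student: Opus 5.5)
Fix $\tht$, $K$, $p$, $T$ as in the statement. I will argue by contradiction together with a compactness/limit argument, using that point-to-point geodesics toward directed endpoints converge, along subsequences, to semi-infinite $\tht$-directed geodesics. Suppose no such $T'$ exists. Then there are $r_n\to\infty$, points $(x_n,s_n)\in K$, and $z_n$ with $\geo_p^{L,\tht-}(r_n)\le(z_n,r_n)\le\geo_p^{R,\tht+}(r_n)$, such that, say, $\geo^L_{(x_n,s_n):(z_n,r_n)}|_{[s_n,T]}$ is neither $\geo^{L,\tht-}_{(x_n,s_n)}|_{[s_n,T]}$ nor $\geo^{L,\tht+}_{(x_n,s_n)}|_{[s_n,T]}$ (the $R$ case is symmetric). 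By compactness of $K$, pass to a subsequence with $(x_n,s_n)\to(x,s)$. Since $\geo_p^{L,\tht-}$ and $\geo_p^{R,\tht+}$ are $\tht$-directed, $z_n/r_n\to\tht$.

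The first key step is sandwiching. By planarity, \eqref{geo:mono}, and the coalescence \eqref{geo:coal2}, every $\tht$-directed geodesic from a point of $K$ coalesces with $\geo_p^{L,\tht-}$ or $\geo_p^{R,\tht+}$ (sign $-$ or $+$) by some finite time $T_0$; Theorem 7.1(iii) of \cite{Bus-Sep-Sor-24} supplies $T_0$ uniformly over $K$. For $r_n\ge T_0$, the endpoint $(z_n,r_n)$ lies between $\geo^{L,\tht-}_{(x_n,s_n)}(r_n)$ and $\geo^{R,\tht+}_{(x_n,s_n)}(r_n)$. Leftmost geodesics cannot cross the semi-infinite geodesics $\geo^{L,\tht-}_{(x_n,s_n)}$ (to the left) or $\geo^{R,\tht+}_{(x_n,s_n)}$ (to the right) without merging, since these are themselves leftmost/rightmost between any two of their points. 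Hence $\geo^{L,\tht-}_{(x_n,s_n)}\preceq\geo^L_{(x_n,s_n):(z_n,r_n)}\preceq\geo^{R,\tht+}_{(x_n,s_n)}$ on $[s_n,r_n]$.

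Next I take limits. By Lemma 5.13 of \cite{Bus-Sep-Sor-24}, the geodesics involved stay in a compact set on $[s_n,T]$. By \cite[Theorem 6.5]{Bus-Sep-Sor-24}, the geodesics $\geo^L_{(x_n,s_n):(z_n,r_n)}$ converge locally uniformly (along a subsequence) to a semi-infinite $\tht$-directed geodesic from $(x,s)$. By \eqref{geodesics}, this limit is some $\geo^{S,\tht\sig}_{(x,s)}$. The uniform overlap/rigidity result \cite[Theorem 1.18]{Bat-Gan-Ham-22} then says that geodesics converging uniformly eventually agree with the limit on compact subintervals away from the endpoints. So for large $n$, $\geo^L_{(x_n,s_n):(z_n,r_n)}$ agrees with a $W^{\tht\sig}$-geodesic on $[s_n+\e,T]$. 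Because it is leftmost, and a leftmost geodesic restricted to a subinterval is leftmost, it must equal $\geo^{L,\tht\sig}_{(x_n,s_n)}$ there. The behavior near the starting time, where $(x_n,s_n)$ varies, is handled by Theorem \ref{thm:h-geo}\eqref{thm:h-geo.j} applied to $\h=W^{\tht\pm}$, which gives local constancy of $\geo^{L,\tht\sig}_{(z,s)}$ after time $s+\e$.

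The main obstacle is the initial segment $[s_n,s_n+\e]$ together with the varying base point. Identifying $\geo^L_{(x_n,s_n):(z_n,r_n)}|_{[s_n,T]}$ exactly, not just after time $s_n+\e$, requires an extra observation. If $\geo^L_{(x_n,s_n):(z_n,r_n)}$ and $\geo^{L,\tht\sig}_{(x_n,s_n)}$ share the point at time $s_n+\e$, then both are leftmost geodesics from $(x_n,s_n)$ to that point, so they coincide on $[s_n,s_n+\e]$. This observation, combined with the agreement after $s_n+\e$, contradicts the choice of the sequence.
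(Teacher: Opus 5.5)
\textbf{There is a genuine gap, in the identification step.} Your overall plan matches the paper's sketch: argue by contradiction, use compactness of $K$, take a $\tht$-directed semi-infinite limit $\tau$ of $g_n=\geo^L_{(x_n,s_n):(z_n,r_n)}$, note that $\tau$ is a $W^{\tht\sigma}$-geodesic, and apply overlap convergence \cite[Theorem 1.18]{Bat-Gan-Ham-22}.

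The problem is the sentence ``because it is leftmost\dots it must equal $\geo^{L,\tht\sigma}_{(x_n,s_n)}$ there.'' Knowing $g_n=\tau$ on $[s+\e,T]$ tells you nothing about $\geo^{L,\tht\sigma}_{(x_n,s_n)}$, for three reasons.
\begin{itemize}
\item The limit $\tau$ is some $\geo^{S,\tht\sigma}_{(x,s)}$ with $S$ unknown. If $(x_n,s_n)$ approaches a shock point from the right, then by \eqref{geo:mono} $\tau\succeq\geo^{R,\tht\sigma}_{(x,s)}$, so $\tau$ need not be $\geo^{L,\tht\sigma}_{(x,s)}$.
\item $T$ may precede the time at which $\tau$ and $\geo^{L,\tht\sigma}_{(x_n,s_n)}$ coalesce. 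So $g_n$ and $\geo^{L,\tht\sigma}_{(x_n,s_n)}$ need not share any point in $(s_n,T]$.
\item Leftmostness on subintervals gives uniqueness only once two leftmost geodesics have common endpoints. That common point is exactly what is missing. Your last paragraph assumes it at time $s_n+\e$ rather than proving it.
\end{itemize}
Theorem~\ref{thm:h-geo}\eqref{thm:h-geo.j} cannot supply the missing point. It treats base points on a single time slice, while your $s_n$ vary. It is one-sided: from the right it only says the $\geo^{L}$'s agree with each other, not with $\geo^{L}_{(x,s)}$. And it never tells you which geodesic out of $(x,s)$ the limit $\tau$ is.

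\textbf{The fix is to run the overlap past coalescence.} Write $\gamma^-=\geo^{L,\tht-}_p$ and $\gamma^+=\geo^{R,\tht+}_p$.
\begin{enumerate}
\item By \cite[Theorem 7.1(iii)]{Bus-Sep-Sor-24} there is $T_0$ after which, for both signs, all $W^{\tht\sigma}$-geodesics out of points of $K$ have merged with $\gamma^\sigma$.
\item Choose $T_2\ge T\vee T_0$ that is also past the time at which $\tau$ merges with $\gamma^\sigma$. Such a time exists by \eqref{geo:coal2}.
\item Apply \cite[Theorem 1.18]{Bat-Gan-Ham-22} to $g_n|_{[s_n,T_2+1]}\to\tau|_{[s,T_2+1]}$. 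For large $n$ this gives
\[g_n(T_2)=\tau(T_2)=\gamma^\sigma(T_2)=\geo^{L,\tht\sigma}_{(x_n,s_n)}(T_2).\]
\item Now $g_n|_{[s_n,T_2]}$ and $\geo^{L,\tht\sigma}_{(x_n,s_n)}|_{[s_n,T_2]}$ are both the leftmost geodesic from $(x_n,s_n)$ to this common point. Hence they are equal, in particular on $[s_n,T]$, which contradicts the choice of the sequence. The rightmost case is analogous.
\end{enumerate}
This disposes of the initial segment and the moving base point in one stroke. The paper's sketch likewise uses leftmostness to reduce failure to a statement at a single time. With this fix, your sandwiching step is needed only to show that $\tau$ is $\tht$-directed.
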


\begin{proof}
Suppose there exists a strictly increasing sequence $r_n>T$ and sequences $(x_n,s_n)\in K$ and $z_n$ such that $\geo_p^{L,\tht-}(r_n)\le(z_n,r_n)\le\geo_p^{R,\tht+}(r_n)$ for all $n$ and
for which \eqref{lm9.1:eqn} fails. 
By the $\tht$-directedness of $\geo_p^{L,\tht\pm}$ in \cite[Theorem 5.9(ii)(d)]{Bus-Sep-Sor-24}, $z_n/r_n\to\tht$.  Now, we have a contradiction with 
 \cite[Lemma 9.1]{Bha-Bus-Sor-25-}.\smallskip

We briefly sketch the contradiction argument, for completeness. Suppose the first claim in \eqref{lm9.1:eqn} fails. Since all geodesics involved are leftmost geodesics, this is equivalent to
\begin{align}\label{fail}
\geo_{(x_n,s_n):(z_n,r_n)}(T)\notin\{\geo_{(x_n,s_n)}^{L,\tht-}(T),\geo_{(x_n,s_n)}^{L,\tht+}(T)\}.
\end{align}
By compactness, we may assume $(x_n,s_n)\to(x,s)$ and $\geo^L_{(x_n,s_n):(z_n,r_n)}$ converges to a semi-infinite geodesic $\tau$ from $(x,s)$. Fix $s'>s$ and take $n$ large enough that $s_n<s'$. By planarity, after passing to a subsequence, $\geo^L_{(x_n,s_n):(z_n,r_n)}|_{[s',\infty)}$ is monotone, and hence converges uniformly by Dini's theorem. Theorem 1.18 in \cite{Bat-Gan-Ham-22} then implies convergence in the overlap topology, so for all large $n$,
$\geo^L_{(x_n,s_n):(z_n,r_n)}|_{[s',T]}=\tau|_{[s',T]}$.
Thus \eqref{fail} yields
\begin{align}\label{1498}
\tau(T)\notin\{\geo^{L,\tht-}_{\tau(s')}(T),\geo^{R,\tht+}_{\tau(s')}(T)\}.
\end{align}
On the other hand, since $\geo_p^{L,\tht-}\preceq(z_n,r_n)\preceq\geo_p^{R,\tht+}$ for all $n$, $\tau$ is also eventually between the two geodesics and is thus $\tht$-directed. Therefore $\tau$ must be a $W^{\tht\sig}$-geodesic for $\sigg\in\{-,+\}$. But coalescence \eqref{geo:coal1} implies that there is a unique $W^{\tht\sig}$-geodesic out of $\tau(s')$, contradicting \eqref{1498}.
\end{proof}

\bibliographystyle{aop-no-url}
\bibliography{firasbib2010}

\end{document}